\pgfplotsset{compat=1.15}
\newcommand{\rev}[1]{\textcolor{black}{#1}}
   \newcommand{\hide}[1]{}%
   \newcommand{\hide}[1]{#1}%
\DeclareMathOperator{\tr}{tr}
\DeclareMathOperator{\rank}{rank}
\DeclareMathOperator{\range}{range}
\renewcommand*{\eqref}[1]{(\ref{#1})}
\newtheorem{theorem}{Theorem}[section]
\newtheorem{corollary}[theorem]{Corollary}
\newtheorem{lemma}[theorem]{Lemma}
\newtheorem{remark}{Remark}
\newtheorem{example}{Example}[section]
\newcommand\numberthis{\addtocounter{equation}{1}\tag{\theequation}}
\newcommand{\xmark}{\ding{55}}
\newcommand{\sbmat}[1]{\left[\begin{smallmatrix} #1 \end{smallmatrix}\right]} 
\title{Algorithm-agnostic low-rank approximation of operator monotone matrix functions\thanks{David Persson was supported by the SNSF research project \textit{Fast algorithms from low-rank updates}, grant number: 200020\_178806. Christopher Musco and Raphael A. Meyer were partially supported by NSF Award 2045590.}}
\author{David Persson\thanks{Institute of Mathematics, EPF Lausanne, Lausanne, Switzerland. \href{mailto:david.persson@epfl.ch}{david.persson@epfl.ch}} \and Raphael A. Meyer\thanks{New York University, New York, NY. \href{mailto:ram900@nyu.edu}{ram900@nyu.edu},\href{mailto:cmusco@nyu.edu}{cmusco@nyu.edu}} \and Christopher Musco\footnotemark[3]}
\begin{document}

\maketitle


\begin{abstract}
Low-rank approximation of a matrix function, $f(\bm{A})$, is an important task in computational mathematics. 
Most methods require direct access to $f(\bm{A})$, which is often considerably more expensive than accessing $\bm{A}$.
Persson and Kressner (SIMAX 2023) avoid this issue for symmetric positive semidefinite matrices by proposing funNyström, which first constructs a Nyström approximation to $\bm{A}$ using subspace iteration,  and then uses the approximation to directly obtain a low-rank approximation for $f(\bm{A})$. They prove that the method yields a near-optimal approximation whenever $f$ is a continuous operator monotone function with $f(0) = 0$.



We significantly generalize the results of Persson and Kressner beyond subspace iteration.
We show that if $\widehat{\bm{A}}$
is a near-optimal low-rank Nyström approximation to $\bm{A}$ then $f(\widehat{\bm{A}})$
is a near-optimal low-rank approximation to $f(\bm{A})$, \emph{independently of how $\widehat{\bm{A}}$ is computed}.
Further, we show sufficient conditions for a basis $\bm{Q}$ to produce a near-optimal Nyström approximation $\widehat{\bm{A}} = \bm{A}\bm{Q}(\bm{Q}^T \bm{A} \bm{Q})^{\dagger} \bm{Q}^T \bm{A}$. We use these results to establish that many common low-rank approximation methods produce near-optimal Nyström approximations to $\bm{A}$ and therefore to $f(\bm{A})$.

\end{abstract}
\section{Introduction}
\label{section:intro}


A common task in numerical linear algebra is to approximate a matrix function $f(\bm{A})$, or some property, like the trace or diagonal entries, of  $f(\bm{A})$.
Formally, given a symmetric matrix $\bm{A} \in \mathbb{R}^{n \times n}$ with eigendecomposition $\bm{A} = \bm{U} \bm{\Lambda} \bm{U}^T$ and a function $f$ defined on the eigenvalues of $\bm{A}$, $f(\bm{A})$ is defined as
\begin{equation*}
    f(\bm{A}) = \bm{U} f(\bm{\Lambda}) \bm{U}^T, \quad f(\bm{\Lambda}) = \begin{bmatrix} f(\lambda_1) & &\\ & \ddots & \\ & & f(\lambda_n) \end{bmatrix}.
\end{equation*}
Exactly computing $f(\bm{A})$ requires an eigendecomposition of $\bm{A}$, which is prohibitively expensive for large scale problems.
Instead, we may seek a cheaper approximation to $f(\bm{A})$ that suffices for a desired application.
In particular, recent research shows that low-rank approximations can help estimate quantities associated with \(f(\bm{A})\).
For example, \cite{saibabarandom, lizhu, saibaba} use a low-rank approximation to estimate $\tr(f(\bm{A}))$ for $f(x) = x$, $\log(1+x),$ and $\frac{1}{x+1}$, and
\cite{chen,jiang2021optimal,hutchpp,AHutchpp} use low-rank approximations of matrix functions as a variance reduction technique for the Girard-Hutchinson trace estimator \cite{cortinoviskressner,ubarusaad,ubaru2017applications}.
Similar techniques can be used to estimate the diagonal of $f(\bm{A})$ \cite{baston2022stochastic,reese2021bayesian}.


While low-rank approximations of $f(\bm{A})$ are clearly useful, a challenge in computing them is that most 
algorithms for low-rank approximation require at least some access to the matrix $f(\bm{A})$, for example in the form of matrix-vector products. This is true of all Krylov subspace methods, as well as randomized SVD methods based on sketching \cite{rsvd,Sarlos:2006,tropp2023randomized,tropp2017fixed,woodruff_sketching}. Since we cannot directly compute matrix-vector products with $f(\bm{A})$ (we do not have the matrix in hand) we need to resort to approximating them. This can be done, for example, with the Lanczos method \cite[Chapter 13]{functionsofmatrices} or rational Krylov subspace methods \cite{guttel}. However, such methods can  require many matrix-vector products with $\bm{A}$ to accurately compute a single matrix-vector product with $f(\bm{A})$. 
Consequently, the cost of obtaining a low-rank approximation to $f(\bm{A})$ can be significantly higher than obtaining a low-rank approximation to $\bm{A}$. 

\subsection{The Nystr\"om and funNystr\"om approximations}

Recent work in \cite{persson2022randomized} address the challenge discussed above by showing that, when $f$ is non-decreasing, it is often possible to entirely avoid the need to approximate matrix-vector products with $f(\bm{A})$ when computing a low-rank approximation. Indeed, constructing a low-rank Nystr\"om approximation to $\bm{A}$ often suffices to get a good low-rank approximation to $f(\bm{A})$. Before describing in detail the method of \cite{persson2022randomized}, which is called funNystr\"om, we briefly recall the general form of Nystr\"om approximation  \cite[Section 14]{randnla}. For a SPSD matrix $\bm{A} \in \mathbb{R}^{n \times n}$, the Nyström approximation with respect to a matrix $\bm{Q} \in \mathbb{R}^{n \times \ell}$ is
\begin{equation}\label{eq:nystrom}
    \widehat{\bm{A}}:= \bm{A} \bm{Q} (\bm{Q}^T \bm{A} \bm{Q})^{\dagger} \bm{Q}^T \bm{A},
\end{equation}
where $\dagger$ denotes the Moore-Penrose pseudoinverse.\footnote{In line with prior work \cite{rsvd,randnla,tropp2023randomized}, we use the term \emph{Nyström approximation} to refer to the general form $\bm{A
Q}(\bm{Q}^T\bm{AQ})^\dagger \bm{Q}^T\bm{A}$. We do not make any assumptions on how $\bm{Q}$ is generated. We note that early work on the Nyström method for low-rank approximation, including the seminal work in \cite{NIPS2000_19de10ad}, as well as many follow up papers \cite{DrineasMahoney:2005,MuscoMusco:2017,ZhangTsangKwok:2008}, use the term Nyström approximation more narrowly to refer to methods where $\bm{Q}$ is obtained by subsampling columns of the identity matrix (i.e. $\bm{AQ}$ is a subset of columns from $\bm{A}$).} 
The Nyström approximation depends only on $\range(\bm{Q})$ \cite[Proposition A.2]{tropp2017fixed}. 
Therefore, without loss of generality we assume that $\bm{Q}$ has orthonormal columns. 
\rev{A typical goal is to find $\bm{Q}$ such that $\|\bm{A} -\widehat{\bm{A}}\| \lesssim \|\bm{A} -{\bm{A}}_{(k)}\|$ for some $k\leq \ell$, where $\bm{A}_{(k)}$ denotes the optimal rank-$k$ approximation to $\bm{A}$, which can be computed via a truncated eigendecomposition for any unitarily invariant norm $\|\cdot \|$ (e.g., the Frobenius, nuclear, or operator norm).
It suffices for $\bm{Q}$ to span the top  $k$ eigenvectors of $\bm{A}$, so the typical goal is to efficiently find $\bm{Q}$ whose span approximately covers at least the top $k$ eigenvectors of $\bm{A}$.}
E.g., $\bm{Q}$ might be chosen to be an orthonormal basis for $\range(\bm{A}^{q-1} \bm{\Omega})$ or $\range\left(\begin{bmatrix} \bm{\Omega} & \bm{A}\bm{\Omega} & \ldots & \bm{A}^{q-1} \bm{\Omega} \end{bmatrix}\right)$ for some $q \geq 1$ and random sketching matrix $\bm{\Omega}$. 
Analyses for various choices of $\bm{Q}$ can be found in \cite{chen2022randomly,cortinovismaxvol,gittensmahoney,persson2022randomized,tropp2023randomized,tropp2017fixed,wang2019scalable}. 
\rev{Note that, when $\ell > k$, $\widehat{\bm{A}}$ could have rank $>k$.
In order to recover an exactly rank $k$ approximation, we can return the} best rank $k$ approximation to $\widehat{\bm{A}}$ instead of $\widehat{\bm{A}}$ itself. Denoted as $\widehat{\bm{A}}_{(k)}$, this matrix can be computed efficiently in $O(n\ell^2)$ time, plus the time to multiply $\bm{A}$ by $\ell$ vectors (see  \Cref{alg:nystrom}).

\begin{algorithm}
\caption{Nyström approximation}
\label{alg:nystrom}
\textbf{input:} SPSD $\bm{A} \in \mathbb{R}^{n \times n}$. Orthonormal basis $\bm{Q} \in \mathbb{R}^{n \times \ell}$. Rank $k$.\\
\textbf{output:} Rank $k$ approx. to $\bm{A}$ in factored form $\widehat{\bm{A}}_{(k)} = \widehat{\bm{U}}_{:,1:k} \widehat{\bm{\Lambda}}_{1:k,1:k}\widehat{\bm{U}}_{:,1:k}^T$. 
\begin{algorithmic}[1]
    \State Compute an eigendecomposition, $\bm{V} \bm{D} \bm{V}^T$, of $\bm{Q}^T \bm{AQ}$. \label{line:cholesky}
    \State Set $\bm{B} = \bm{AQ}\bm{V} (\bm{D}^{1/2})^{\dagger}$
    \State Compute the SVD of $\bm{B} = \widehat{\bm{U}} \bm{\Sigma} \bm{W}^T$. Set $\widehat{\bm{\Lambda}} = \bm{\Sigma}^2$ \label{line:lambda}
    \State \textbf{return} $\widehat{\bm{U}}_{:,1:k},\widehat{\bm{\Lambda}}_{1:k,1:k}$.
\end{algorithmic}
\end{algorithm}

The funNyström method of \cite{persson2022randomized} first forms a Nyström approximation $\widehat{\bm{A}}$, where $\bm{Q} = \range(\bm{A}^{q-1} \bm{\Omega})$ is an orthonormal basis constructed via subspace iteration.
The method\footnote{The work in \cite{persson2022randomized} considers a more restricted setting where $f(0) = 0$, in which case $f(\widehat{\bm{A}})_{(k)}$ simply equals $f(\widehat{\bm{A}})$. Throughout, we only make the weaker assumption that $f(0)\geq 0$.} then constructs a low-rank approximation to $f(\bm{A})$ by returning $f(\widehat{\bm{A}})_{(k)}$, which denotes the best $k$-rank approximation to $f(\widehat{\bm{A}})$. When $f$ is non-decreasing with $f(0)\geq 0$, and $\widehat{\bm{A}}_{(k)} = \widehat{\bm{U}}_{:,1:k}\widehat{\bm{\Lambda}}_{1:k,1:k}\widehat{\bm{U}}_{:,1:k}^T$ is available in factored form, as returned by \Cref{alg:nystrom}, the funNyström approximation can be computed as:
\begin{align}
    \label{eq:funnystrom}
    f(\widehat{\bm{A}})_{(k)} = \widehat{\bm{U}}_{:,1:k}f(\widehat{\bm{\Lambda}}_{1:k,1:k})\widehat{\bm{U}}_{:,1:k}^T. 
\end{align}
The intuition behind funNyström is that, if $f$ is a positive non-decreasing function, applying $f$ to an SPSD matrix $\bm{A}$ does not change the order of eigenvalues. As a result, if $\widehat{\bm{A}}_{(k)}$ was an \emph{optimal} rank $k$ approximation to $\bm{A}$ in any unitarily invariant norm, then $f(\widehat{\bm{A}})_{(k)}$ would be an optimal rank $k$ approximation to $f(\bm{A})$\rev{; see \cite[Lemma 1.1]{persson2022randomized}.}
Hence, we might hope that if $\widehat{\bm{A}}_{(k)}$ is a near-optimal rank $k$ approximation to $\bm{A}$, then $f(\widehat{\bm{A}})_{(k)}$ remains a near-optimal rank $k$ approximation to $f(\bm{A})$. 

Formally, \cite{persson2022randomized} derives error bounds for $\|f(\bm{A}) - f(\widehat{\bm{A}})_{(k)}\|$  for a variety of unitarily invariant norms for a special class of non-decreasing functions: \emph{operator monotone} functions satisfying $f(0) = 0$. 
A function $f$ is operator monotone if for any symmetric matrices $\bm{B}\succeq\bm{C}$ we have that $f(\bm{B}) \succeq f(\bm{C})$, \rev{where $\bm{B}\succeq\bm{C}$ means that $\bm{B}-\bm{C}$ is SPSD. }
Important examples used in applications include $\log(1+x)$, $x^\alpha$ for $\alpha \in [0,1]$, and the ``ridge function'' $\frac{x}{x+\lambda}$ for $\lambda \geq 0$ \cite{FrostigMuscoMusco:2016}.
Notably, all operator monotone functions are concave, so $\bm{A}$ has larger normalized eigenvalue gaps than $f(\bm{A})$. An important intuition behind the work of \cite{persson2022randomized} is that, given these larger gaps, subspace iteration run on $\bm{A}$ should converge no slower to the top eigenvectors of $f(\bm{A})$ than if we had run it on $f(\bm{A})$ directly.

\subsection{Our Contributions}
The main limitation of \cite{persson2022randomized} is that its analysis of the funNyström method is specialized to $\widehat{\bm{A}}$ produced by subspace iteration specifically, i.e. $\bm{Q}$ in \eqref{eq:nystrom} is an orthonormal basis for $\range(\bm{A}^{q-1} \bm{\Omega})$ where $\bm{\Omega}$ is a random matrix.
The literature contains a wide variety of other low-rank approximation algorithms, many of which often outperform subspace iteration, including e.g. randomized Krylov methods \cite{MeyerMuscoMusco:2024,MM15}, methods based on sparse or structured random projections \cite{ClarksonWoodruff:2013,LibertyWoolfeMartinsson:2007}, frequent directions methods \cite{GhashamiLibertyPhillips:2016}, or methods based on random sampling \cite{chen2022randomly,DrineasMahoney:2005, DrineasMahoneyMuthukrishnan:2008}. The analysis in \cite{persson2022randomized} tells us nothing about how good a low-rank approximation  $f(\widehat{\bm{A}})_{(k)}$ would be if $\widehat{\bm{A}}$ is produced by such methods.

We address this gap by significantly generalizing the results in \cite{persson2022randomized}\rev{, proving results that hold \textit{independently of how $\widehat{\bm{A}}$ was computed}}. \rev{In particular, for the nuclear, Frobenius, and operator norms,} we show that if $\widehat{\bm{A}}$ is \emph{any} near-optimal low-rank approximation to $\bm{A}$ so that $\bm{A} \succeq \widehat{\bm{A}} \succeq \bm{0}$ \rev{(see, e.g., \eqref{eq:nearopt})}, then for any positive, continuous operator monotone function, $f(\widehat{\bm{A}})_{(k)}$ is a near-optimal low-rank approximation to $f(\bm{A})$. 
Importantly, \rev{as a consequence of \cite[Lemma 1]{gittensmahoney}} any Nyström approximation satisfies $\bm{A} \succeq \widehat{\bm{A}} \succeq \bm{0}$,\footnote{\rev{The error satisfies $\bm{A} - \widehat{\bm{A}} = \bm{A}^{1/2}(\bm{I} - \bm{P}_{\bm{A}^{1/2} \bm{Q}}) \bm{A}^{1/2} \succeq \bm{0}$, where $\bm{P}_{\bm{A}^{1/2} \bm{Q}}$ denotes the orthogonal projector onto $\range(\bm{A}^{1/2} \bm{Q})$.}} so our guarantees extend to all possible Nyström approximations to $\bm{A}$, no matter how $\bm{Q}$ is obtained.
For example, \rev{for the nuclear norm, we prove} the following.
\begin{theorem}\label{theorem:nuclear_black_box_intro}
    Suppose that $\bm{A} \succeq \widehat{\bm{A}} \succeq \bm{0}$. Let $\bm{A}_{(k)}$ and $\widehat{\bm{A}}_{(k)}$ be optimal rank $k$ approximations to $\bm{A}$ and $\widehat{\bm{A}}$, respectively. Further suppose that for $\varepsilon \geq 0$,
    \begin{equation}\label{eq:nearopt}
        \|\bm{A}-\widehat{\bm{A}}_{(k)}\|_* \leq (1+\varepsilon)\|\bm{A}-\bm{A}_{(k)}\|_*,
    \end{equation}
    where $\|\cdot\|_*$ denotes the nuclear norm. Then for any continuous operator monotone function $f:[0,\infty) \to [0,\infty)$ we have
    \begin{equation*}
        \|f(\bm{A})-f(\widehat{\bm{A}})_{(k)}\|_* \leq (1+\varepsilon)\|f(\bm{A})-f(\bm{A})_{(k)}\|_*.
    \end{equation*}
\end{theorem}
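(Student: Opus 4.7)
The plan is to convert both nuclear norms into eigenvalue sums using PSD dominance, reducing the statement to a scalar inequality on the eigenvalues, and then prove that inequality using concavity via the fact that $f(x)/x$ is nonincreasing.

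Write $\lambda_i = \lambda_i(\bm{A})$ and $\widehat{\lambda}_i = \lambda_i(\widehat{\bm{A}})$, sorted in decreasing order. Operator monotonicity combined with $f\geq 0$ on $[0,\infty)$ yields $f(\bm{A})\succeq f(\widehat{\bm{A}})\succeq f(\widehat{\bm{A}})_{(k)}\succeq \bm{0}$, so the nuclear norm on the left of the target becomes a trace difference. Since $f$ is scalar monotone, the top-$k$ eigenvalues of $f(\widehat{\bm{A}})$ are $f(\widehat{\lambda}_1),\ldots,f(\widehat{\lambda}_k)$, giving
\[\|f(\bm{A})-f(\widehat{\bm{A}})_{(k)}\|_* = \sum_{i=1}^{k}\bigl[f(\lambda_i)-f(\widehat{\lambda}_i)\bigr] + \sum_{j>k} f(\lambda_j).\]
The identical calculation with $f$ replaced by the identity turns the hypothesis into
\[\sum_{i=1}^{k}(\lambda_i-\widehat{\lambda}_i)\leq \varepsilon\sum_{j>k}\lambda_j,\]
with nonnegative summands because $\bm{A}\succeq\widehat{\bm{A}}$ implies $\lambda_i\geq\widehat{\lambda}_i$ (Weyl's monotonicity). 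Since $\|f(\bm{A})-f(\bm{A})_{(k)}\|_*=\sum_{j>k}f(\lambda_j)$, the conclusion reduces to the scalar inequality
\[\sum_{i=1}^{k}\bigl[f(\lambda_i)-f(\widehat{\lambda}_i)\bigr]\leq \varepsilon \sum_{j>k}f(\lambda_j).\]

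For this scalar step I would invoke that every continuous operator monotone $f:[0,\infty)\to[0,\infty)$ is concave (L\"owner--Heinz), which together with $f(0)\geq 0$ forces $g(x):=f(x)/x$ to be nonincreasing on $(0,\infty)$. Two consequences suffice: (i) for $0\leq \widehat{\lambda}\leq \lambda$, $f(\lambda)-f(\widehat{\lambda})\leq g(\lambda)(\lambda-\widehat{\lambda})$ (equivalent to $g(\widehat{\lambda})\geq g(\lambda)$), and (ii) with the pivot $\lambda_{k+1}$, $g(\lambda_i)\leq g(\lambda_{k+1})\leq g(\lambda_j)$ whenever $i\leq k<j$. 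Chaining these with the budget bound,
\[\sum_{i=1}^{k}\bigl[f(\lambda_i)-f(\widehat{\lambda}_i)\bigr]\leq g(\lambda_{k+1})\sum_{i=1}^{k}(\lambda_i-\widehat{\lambda}_i)\leq \varepsilon\, g(\lambda_{k+1})\sum_{j>k}\lambda_j\leq \varepsilon\sum_{j>k}f(\lambda_j),\]
where the last step uses $g(\lambda_{k+1})\lambda_j\leq g(\lambda_j)\lambda_j = f(\lambda_j)$ for $j>k$. The edge case $\lambda_{k+1}=0$ is handled separately: the budget bound forces $\widehat{\lambda}_i=\lambda_i$ for $i\leq k$, and both sides of the target equal $(n-k)f(0)$.

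The main obstacle is selecting the right pivot. Since $\widehat{\lambda}_i$ ranges freely over $[0,\lambda_i]$, pointwise estimates involving $f$ evaluated or differentiated at $\widehat{\lambda}_i$ can blow up (e.g.\ $f(x)=\sqrt{x}$ has unbounded derivative at $0$). The ratio $f(x)/x$ is the right object because it is uniformly upper-bounded over the top-$k$ spectrum and uniformly lower-bounded over the tail, with $\lambda_{k+1}$ the natural dividing value that lets the linear budget constraint on $\sum_{i\leq k}(\lambda_i-\widehat{\lambda}_i)$ be transferred to the $f$-weighted tail $\sum_{j>k}f(\lambda_j)$.
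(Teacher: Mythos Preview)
Your proposal is correct and follows essentially the same approach as the paper: reduce the nuclear norm to eigenvalue sums via $\bm{A}\succeq\widehat{\bm{A}}_{(k)}$ and $f(\bm{A})\succeq f(\widehat{\bm{A}})_{(k)}$ (operator monotonicity), then use concavity through the monotonicity of $f(x)/x$ to transfer the linear budget $\sum_{i\leq k}(\lambda_i-\widehat{\lambda}_i)\leq\varepsilon\sum_{j>k}\lambda_j$ to the $f$-scale. The only cosmetic difference is that you pivot explicitly at $\lambda_{k+1}$, whereas the paper writes $\delta_i=1-\widehat{\lambda}_i/\lambda_i$ and compares each $\lambda_i$ to the whole tail via the ratio $\frac{\sum_{j>k}f(\lambda_j)}{\sum_{j>k}\lambda_j}$; both routes are the same use of $f(x)/x$ nonincreasing, and the paper packages the argument for general Schatten $p$-norms (Theorem~2.5) before specializing to $p=1$.
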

The above theorem applies to the nuclear norm $\|\cdot \|_*$, however we also prove similar results when low-rank approximation error is measured in the the operator norm (\Cref{theorem:spectral_black_box}) and the Frobenius norm (\Cref{theorem:frobenius_black_box}).


Further, to facilitate the application of these theorems, we present sufficient conditions for an orthonormal basis $\bm{Q}$ to produce a near-optimal Nyström approximation of the form \eqref{eq:nystrom}. For example, we prove the following.

\begin{theorem}\label{theorem:nuclear_grey_box_intro}
Let $\bm{A} \succeq \bm{0}$ and let $\bm{Q}$ be an orthonormal basis so that, for $\varepsilon \geq 0$,
    \begin{equation}\label{eq:introeq}
        \|\bm{A} - (\bm{Q} \bm{Q}^T \bm{A})_{(k)}\|_* \leq (1+\varepsilon) \|\bm{A}-\bm{A}_{(k)}\|_*.
    \end{equation}
    Then if $\widehat{\bm{A}} = \bm{A} \bm{Q} (\bm{Q}^T \bm{A} \bm{Q})^{\dagger} \bm{Q}^T \bm{A}$ we have
\begin{equation*}
    \|\bm{A}-\widehat{\bm{A}}_{(k)}\|_* \leq (1+\varepsilon)\|\bm{A}-\bm{A}_{(k)}\|_*. 
\end{equation*}
\end{theorem}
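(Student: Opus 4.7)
The plan is to reduce the theorem to establishing the single intermediate inequality
\[
\|\bm{A} - \widehat{\bm{A}}_{(k)}\|_* \leq \|\bm{A} - (\bm{Q}\bm{Q}^T\bm{A})_{(k)}\|_*,
\]
after which the hypothesis \eqref{eq:introeq} yields the conclusion in one line. So all the real work is in proving this comparison, and I will do it by rewriting both sides so that a clean pointwise eigenvalue inequality between $\widehat{\bm{A}}$ and $\bm{Q}\bm{Q}^T\bm{A}$ finishes the job.

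On the left, I use that $\bm{A} \succeq \widehat{\bm{A}} \succeq \widehat{\bm{A}}_{(k)} \succeq \bm{0}$: the first inequality is the standard Nystr\"om fact recalled in the introduction, and the second holds because the best rank-$k$ approximation of an SPSD matrix is itself SPSD and dominated by the original in Loewner order. Thus $\bm{A} - \widehat{\bm{A}}_{(k)}$ is SPSD and its nuclear norm equals its trace, giving $\|\bm{A} - \widehat{\bm{A}}_{(k)}\|_* = \tr(\bm{A}) - \sum_{i=1}^k \lambda_i(\widehat{\bm{A}})$. On the right, the trivial inequality $\tr(\bm{X}) \leq \|\bm{X}\|_*$ applied to $\bm{X} = \bm{A} - (\bm{Q}\bm{Q}^T\bm{A})_{(k)}$, combined with $\tr((\bm{Q}\bm{Q}^T\bm{A})_{(k)}) \leq \|(\bm{Q}\bm{Q}^T\bm{A})_{(k)}\|_* = \sum_{i=1}^k \sigma_i(\bm{Q}\bm{Q}^T\bm{A})$, yields $\|\bm{A} - (\bm{Q}\bm{Q}^T\bm{A})_{(k)}\|_* \geq \tr(\bm{A}) - \sum_{i=1}^k \sigma_i(\bm{Q}\bm{Q}^T\bm{A})$. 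So the whole theorem reduces to showing the pointwise comparison $\lambda_i(\widehat{\bm{A}}) \geq \sigma_i(\bm{Q}\bm{Q}^T\bm{A})$ for every $i$.

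That pointwise comparison is the crux, and I would derive it from the stronger matrix inequality $\widehat{\bm{A}}^{\,2} \succeq \bm{A}\bm{Q}\bm{Q}^T\bm{A}$ by Weyl's monotonicity theorem---take eigenvalues and then square roots, both of which are well-defined and monotone on SPSD matrices. To prove this matrix inequality, write $\bm{M} := \bm{Q}^T\bm{A}\bm{Q}$ and $\bm{N} := \bm{Q}^T\bm{A}^2\bm{Q}$; direct expansion gives $\widehat{\bm{A}}^{\,2} - \bm{A}\bm{Q}\bm{Q}^T\bm{A} = \bm{A}\bm{Q}\bigl(\bm{M}^\dagger \bm{N}\bm{M}^\dagger - \bm{I}\bigr)\bm{Q}^T\bm{A}$, which (in the invertible case $\bm{M}\succ \bm{0}$) is SPSD iff $\bm{N} \succeq \bm{M}^2$. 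The latter follows immediately from the one-line factorization
\[
\bm{N} - \bm{M}^2 \;=\; \bm{Q}^T\bm{A}(\bm{I} - \bm{Q}\bm{Q}^T)\bm{A}\bm{Q} \;=\; \bigl((\bm{I}-\bm{Q}\bm{Q}^T)\bm{A}\bm{Q}\bigr)^T\bigl((\bm{I}-\bm{Q}\bm{Q}^T)\bm{A}\bm{Q}\bigr) \;\succeq\; \bm{0},
\]
using only that $\bm{I}-\bm{Q}\bm{Q}^T$ is an idempotent orthogonal projector. Chaining the three pieces then gives $\|\bm{A}-\widehat{\bm{A}}_{(k)}\|_* \leq \|\bm{A}-(\bm{Q}\bm{Q}^T\bm{A})_{(k)}\|_* \leq (1+\varepsilon)\|\bm{A}-\bm{A}_{(k)}\|_*$.

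The main obstacle I anticipate is the degenerate case where $\bm{M}$ is singular, since the equivalence $\bm{M}^\dagger \bm{N}\bm{M}^\dagger \succeq \bm{I} \iff \bm{N} \succeq \bm{M}^2$ is only clean for invertible $\bm{M}$. I would handle this either by observing that $\range(\bm{Q}^T\bm{A}) \subseteq \range(\bm{M})$ (because $\bm{Q}^T\bm{A} = \bm{Q}^T\bm{A}^{1/2}\bm{A}^{1/2}$ and $\range(\bm{M}) = \range(\bm{Q}^T\bm{A}^{1/2})$), so the conjugation by $\bm{A}\bm{Q}$ effectively restricts everything to $\range(\bm{M})$ where $\bm{M}^\dagger$ acts as a genuine inverse, or by a continuity argument replacing $\bm{A}$ with $\bm{A} + \delta\bm{I}$ and letting $\delta \downarrow 0$. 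Modulo this technicality, the rest of the argument is mechanical.
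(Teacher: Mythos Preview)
Your argument is correct, and it proceeds by a genuinely different route from the paper's. Both proofs aim at the same intermediate inequality $\|\bm{A} - \widehat{\bm{A}}_{(k)}\|_* \leq \|\bm{A} - (\bm{Q}\bm{Q}^T\bm{A})_{(k)}\|_*$, but the mechanics differ. The paper converts the nuclear norm of $\bm{A}-\widehat{\bm{A}}_{(k)}$ into a squared Frobenius norm on $\bm{A}^{1/2}$ via $\|\bm{A}-\widehat{\bm{A}}_{(k)}\|_* = \|\bm{A}^{1/2}-(\bm{P}_{\bm{A}^{1/2}\bm{Q}}\bm{A}^{1/2})_{(k)}\|_F^2$, and then leans on the Frobenius-norm chain of inequalities developed earlier (the constrained best-rank-$k$ lemma and the lemma comparing projections onto $\range(\bm{Q})$, $\range(\bm{A}^{1/2}\bm{Q})$, $\range(\bm{A}\bm{Q})$). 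Your argument is more direct: you establish the Loewner inequality $\widehat{\bm{A}}^{\,2} \succeq \bm{A}\bm{Q}\bm{Q}^T\bm{A}$ by the one-line identity $\bm{N}-\bm{M}^2 = \bm{Q}^T\bm{A}(\bm{I}-\bm{Q}\bm{Q}^T)\bm{A}\bm{Q} \succeq \bm{0}$, deduce the pointwise comparison $\lambda_i(\widehat{\bm{A}}) \geq \sigma_i(\bm{Q}\bm{Q}^T\bm{A})$ via Weyl, and finish with trace bounds. Your handling of singular $\bm{M}$ by the range inclusion $\range(\bm{Q}^T\bm{A}) \subseteq \range(\bm{M})$ is clean and works (it gives $\bm{M}^\dagger\bm{M}\,\bm{Q}^T\bm{A} = \bm{Q}^T\bm{A}$, so conjugating $\bm{M}^\dagger\bm{N}\bm{M}^\dagger \succeq \bm{M}^\dagger\bm{M}$ by $\bm{A}\bm{Q}$ yields exactly what you need).

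As for trade-offs: the paper's approach has the virtue of reusing infrastructure already built for the Frobenius result, so the nuclear-norm theorem becomes almost a corollary. Your approach is self-contained, avoids that machinery entirely, and delivers a strictly stronger intermediate fact --- the per-index inequality $\lambda_i(\widehat{\bm{A}}) \geq \sigma_i(\bm{Q}^T\bm{A})$ --- which is in fact the content of the paper's separate eigenvalue guarantee (proved there by a different short trick, $\bm{Q}^T\widehat{\bm{A}} = \bm{Q}^T\bm{A}$). So your route unifies the nuclear-norm and eigenvalue results in a single stroke.
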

Similar guarantees 
are proven in \cite{tropp2023randomized}, but they are constrained to the case when $\bm{Q}$ has exactly $k$ columns.
In contrast, our results allow $\bm{Q}$ to have more than $k$ columns.
Guarantees of the form \eqref{eq:introeq} are common in the literature; see e.g. \cite{aineshlowrank,bakshi2023krylov,iyer2018iterative,MeyerMuscoMusco:2024,MM15}.
\Cref{theorem:nuclear_grey_box_intro} allows us to translate these existing results into guarantees for the Nyström approximation.
Then, by using \Cref{theorem:nuclear_black_box_intro}, these existing results for $\bm{Q}$ translate all the way into results for the funNyström approximation. 
\rev{For example, {\cite[Theorem 5.1]{aineshlowrank}} shows that there exists an algorithm which performs $O\left(\frac{k\log(n/\varepsilon)}{\varepsilon^{1/3}}\right)$ matrix-vector products with $\bm{A}$, where $\varepsilon \in (0,1)$, and returns an orthonormal basis $\bm{Q} \in \mathbb{R}^{n \times k}$ such that with probability at least $0.9$}
    \[
        \rev{\|\bm{A} - \bm{Q}\bm{Q}^T\bm{A}\|_* \leq (1+\varepsilon) \|\bm{A} - \bm{A}_{(k)}\|_*.}
    \]
\rev{By using \Cref{theorem:nuclear_black_box_intro,theorem:nuclear_grey_box_intro}, we immediately get the following corollary.}

\begin{corollary}
    Let $\bm{A} \succeq \bm{0}$ and $\varepsilon \in (0,1)$.
    Let $f:[0,\infty) \to [0,\infty)$ be a continuous operator monotone function.
    There is an algorithm which performs $\rev{O\left(\frac{k\log(n/\varepsilon)}{\varepsilon^{1/3}}\right)}$ matrix-vector products with $\bm{A}$ and returns a rank $k$ Nyström approximation $\widehat{\bm{A}}$ such that with probability at least $0.9$
    \[
        \|f(\bm{A}) - f(\widehat{\bm{A}})_{(k)}\|_* \leq (1+\varepsilon) \|f(\bm{A}) - f(\bm{A})_{(k)}\|_*.
    \]
\end{corollary}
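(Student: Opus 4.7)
The plan is a direct chain of three ingredients already in hand: the low-rank approximation algorithm cited immediately before the corollary, \Cref{theorem:nuclear_grey_box_intro}, and \Cref{theorem:nuclear_black_box_intro}. No new technical machinery is needed; the task reduces to verifying that each successive hypothesis is met and that the matrix-vector product budget is respected.

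First, I would invoke \cite[Theorem 5.1]{aineshlowrank} with rank parameter $k$ and tolerance $\varepsilon$. This produces, using $O\left(\frac{k\log(n/\varepsilon)}{\varepsilon^{1/3}}\right)$ matrix-vector products with $\bm{A}$, an orthonormal basis $\bm{Q} \in \mathbb{R}^{n \times k}$ which, with probability at least $0.9$, satisfies $\|\bm{A} - \bm{Q}\bm{Q}^T\bm{A}\|_* \leq (1+\varepsilon)\|\bm{A} - \bm{A}_{(k)}\|_*$. I would condition on this event for the remainder of the argument. Since $\bm{Q}$ has exactly $k$ columns, $\bm{Q}\bm{Q}^T\bm{A}$ has rank at most $k$, so $(\bm{Q}\bm{Q}^T\bm{A})_{(k)} = \bm{Q}\bm{Q}^T\bm{A}$ and the displayed inequality is precisely the hypothesis of \Cref{theorem:nuclear_grey_box_intro}.

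Next, I would form the Nyström approximation $\widehat{\bm{A}} = \bm{A}\bm{Q}(\bm{Q}^T\bm{A}\bm{Q})^{\dagger}\bm{Q}^T\bm{A}$ and its best rank-$k$ truncation $\widehat{\bm{A}}_{(k)}$ via \Cref{alg:nystrom}. This step only requires the product $\bm{A}\bm{Q}$ (after which $\bm{Q}^T\bm{A}\bm{Q} = \bm{Q}^T(\bm{A}\bm{Q})$ is free), costing at most $k$ additional matrix-vector products with $\bm{A}$; this is absorbed into the asymptotic bound. Applying \Cref{theorem:nuclear_grey_box_intro} then yields $\|\bm{A} - \widehat{\bm{A}}_{(k)}\|_* \leq (1+\varepsilon)\|\bm{A} - \bm{A}_{(k)}\|_*$.

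Finally, as noted in the footnote attached to \Cref{theorem:nuclear_black_box_intro}, every Nyström approximation is automatically sandwiched as $\bm{A} \succeq \widehat{\bm{A}} \succeq \bm{0}$, so the remaining hypothesis of \Cref{theorem:nuclear_black_box_intro} is met. Applying that theorem with our continuous operator monotone $f : [0,\infty) \to [0,\infty)$ gives $\|f(\bm{A}) - f(\widehat{\bm{A}})_{(k)}\|_* \leq (1+\varepsilon)\|f(\bm{A}) - f(\bm{A})_{(k)}\|_*$, which is the stated conclusion. Because the argument is purely compositional, I do not anticipate a real technical obstacle; the only points worth checking carefully are the hypothesis-matching in the first step and that $\widehat{\bm{A}}_{(k)}$ can be returned in the factored form \eqref{eq:funnystrom} within the stated matvec budget.
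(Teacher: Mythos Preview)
Your proposal is correct and follows exactly the approach the paper intends: invoke the cited algorithm to obtain $\bm{Q}$, apply \Cref{theorem:nuclear_grey_box_intro} to pass to the Nyström approximation, then apply \Cref{theorem:nuclear_black_box_intro} to pass to $f(\bm{A})$. The paper in fact states the corollary as an immediate consequence of these two theorems without spelling out the steps you have written.
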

\rev{Alternatively, we can recover a main result from the original funNyström paper \cite{persson2022randomized}. By a simple modification of the proof of \cite[Theorem 4.1]{tropp2017fixed} one can derive an expectation bound on the error $\|\bm{A} - \widehat{\bm{A}}_{(k)}\|_*$ when the Nyström approximation $\widehat{\bm{A}}$ is constructed using subspace iteration. Then, by using \Cref{theorem:nuclear_black_box_intro} we immediately recover \cite[Theorem 3.9]{persson2022randomized}. }

\begin{corollary}
    \rev{Let $\bm{A} \succeq \bm{0}$ and let $\gamma = \lambda_{k+1}/\lambda_k$ denote the $k^{\text{th}}$ spectral gap of $\bm{A}$. Let $\bm{Q}$ be an orthonormal basis for $\range(\bm{A}^{q-1} \bm{\Omega})$, where $q \geq 1$ and $\bm{\Omega}$ is a $n \times (k+p)$ random matrix whose entries are independent identically distributed $N(0,1)$ random variables. If $\widehat{\bm{A}} = \bm{A} \bm{Q}(\bm{Q}^T \bm{A} \bm{Q})^{\dagger} \bm{Q}^T \bm{A}$ and $p \geq 2$, then for any continuous operator monotone function $f:[0,\infty) \to [0,\infty)$ we have}
    \begin{equation*}
        \rev{\mathbb{E}\|f(\bm{A})-f(\widehat{\bm{A}})_{(k)}\|_* \leq \left(1+ \gamma^{2(q-1)} \frac{k}{p-1}\right)\|f(\bm{A}) - f(\bm{A})_{(k)}\|_*.} 
    \end{equation*}
\end{corollary}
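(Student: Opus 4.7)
The plan is to combine the deterministic, algorithm-agnostic guarantee \Cref{theorem:nuclear_black_box_intro} with a scalar nuclear-norm expectation bound for the subspace iteration Nyström approximation, namely the ``simple modification of \cite[Theorem 4.1]{tropp2017fixed}'' referenced in the paragraph preceding the corollary. The target scalar bound is
\begin{equation*}
\mathbb{E}\|\bm{A}-\widehat{\bm{A}}_{(k)}\|_* \leq \left(1 + \gamma^{2(q-1)}\frac{k}{p-1}\right)\|\bm{A}-\bm{A}_{(k)}\|_*,
\end{equation*}
and once it is in hand, the corollary follows by a brief probabilistic lift.

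The lifting argument is the following. Assuming $\|\bm{A}-\bm{A}_{(k)}\|_*>0$ (if $\lambda_{k+1}=0$ the claim is trivial: with $p\geq 2$, $\bm{Q}$ spans $\range(\bm{A})$ almost surely), for each realization of $\bm{\Omega}$ set
\begin{equation*}
\varepsilon(\bm{\Omega}) := \frac{\|\bm{A}-\widehat{\bm{A}}_{(k)}\|_*}{\|\bm{A}-\bm{A}_{(k)}\|_*} - 1,
\end{equation*}
which is nonnegative by nuclear-norm optimality of $\bm{A}_{(k)}$. Because the footnote guarantees $\bm{A}\succeq\widehat{\bm{A}}\succeq\bm{0}$ for every Nyström approximation, \Cref{theorem:nuclear_black_box_intro} applies realization by realization with this $\varepsilon(\bm{\Omega})$, yielding
\begin{equation*}
\|f(\bm{A})-f(\widehat{\bm{A}})_{(k)}\|_* \leq \frac{\|\bm{A}-\widehat{\bm{A}}_{(k)}\|_*}{\|\bm{A}-\bm{A}_{(k)}\|_*}\,\|f(\bm{A})-f(\bm{A})_{(k)}\|_*.
\end{equation*}
The only random factor on the right is $\|\bm{A}-\widehat{\bm{A}}_{(k)}\|_*$, so taking expectations and substituting the target scalar bound concludes the proof.

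The main obstacle is therefore the scalar bound itself, i.e.\ establishing the subspace-iteration analogue of \cite[Theorem 4.1]{tropp2017fixed}. I would follow the Halko--Martinsson--Tropp template: start from the identity $\bm{A}-\widehat{\bm{A}} = \bm{A}^{1/2}(\bm{I}-\bm{P}_{\bm{A}^{1/2}\bm{Q}})\bm{A}^{1/2}$, observe that $\range(\bm{A}^{1/2}\bm{Q}) = \range(\bm{A}^{q-1/2}\bm{\Omega})$, and decompose $\bm{\Omega}$ against the top-$k$ eigenbasis of $\bm{A}$ into leading and trailing Gaussian blocks. Reweighting the trailing block by the powered tail eigenvalues produces the gap factor $\gamma^{2(q-1)}$, and the standard identity $\mathbb{E}\|\bm{G}_2\bm{G}_1^\dagger\|_F^2 = k/(p-1)$ accounts for both the $k/(p-1)$ term and the hypothesis $p\geq 2$. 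The passage from the untruncated Nyström error to $\widehat{\bm{A}}_{(k)}$ is handled by the PSD chain $\widehat{\bm{A}}_{(k)}\preceq\widehat{\bm{A}}\preceq\bm{A}$, which lets the nuclear norm split as a sum of traces, combined with Weyl's inequality $\|\widehat{\bm{A}}-\widehat{\bm{A}}_{(k)}\|_*\leq\|\bm{A}-\bm{A}_{(k)}\|_*$ to absorb the additive $1$ in the final factor.
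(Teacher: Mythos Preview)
Your proposal is correct and follows exactly the approach the paper sketches: derive the expectation bound $\mathbb{E}\|\bm{A}-\widehat{\bm{A}}_{(k)}\|_* \leq \left(1+\gamma^{2(q-1)}\frac{k}{p-1}\right)\|\bm{A}-\bm{A}_{(k)}\|_*$ via the modification of \cite[Theorem 4.1]{tropp2017fixed}, then apply \Cref{theorem:nuclear_black_box_intro} realization by realization and take expectations. One minor imprecision: in the degenerate case $\lambda_{k+1}=0$ with $q=1$, it is $\bm{A}^{1/2}\bm{Q}$ (not $\bm{Q}$ itself) that spans $\range(\bm{A})$ almost surely, which via \Cref{lemma:folklore} still gives $\widehat{\bm{A}}=\bm{A}$ a.s.\ and makes the claim trivial.
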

As for \Cref{theorem:nuclear_black_box_intro}, we give similar guarantees to \Cref{theorem:nuclear_grey_box_intro} in the Frobenius and operator norm.
We also provide guarantees for eigenvalue estimation. 
As in \cite{persson2022randomized}, a strength of our results is that one does not need to know $f$ beforehand. The user can compute and store a Nyström approximation for $\bm{A}$ and use it to construct a funNyström approximation to \emph{any} continuous operator monotone function, and our results show that this approximation is always near-optimal as long as the Nyström approximation is near-optimal. 

In addition to our main theorems relating Nystr\"om approximation of $\bm{A}$ to funNystr\"om approximation of $f(\bm{A})$, we investigate if we can relax any of the assumptions in the results.
Specifically, we ask: $(i)$ is the assumption that $\bm{A} \succeq \widehat{\bm{A}}$ necessary? $(ii)$ is the operator monotonicity assumption necessary, or is it sufficient to assume that $f$ is concave and increasing? For some norms, we are able to find counterexamples that show that these assumptions are in fact necessary. Our findings are summarized in \Cref{table:summ}. We note, however, that our counterexamples only show that there are cases when $\|\bm{A}-\widehat{\bm{A}}_{(k)}\| \leq (1+\varepsilon)\|\bm{A}-\bm{A}_{(k)}\|$, but $\|f(\bm{A})-f(\widehat{\bm{A}})_{(k)}\| > (1+\varepsilon + \delta)\|f(\bm{A})-f(\bm{A})_{(k)}\|$ for some non-zero $\delta$. They do not, for example, rule out the possibility that $\|\bm{A}-\widehat{\bm{A}}_{(k)}\| \leq (1+\varepsilon)\|\bm{A}-\bm{A}_{(k)}\|$ could  imply that $\|f(\bm{A})-f(\widehat{\bm{A}})_{(k)}\| \leq (1+C\varepsilon)\|f(\bm{A})-f(\bm{A})_{(k)}\|$ for some fixed constant $C \geq 1$ (e.g., $C=2$ or $C=10$). Finding stronger counter examples is an interesting direction for future work.
\begin{table}[H]
\centering
\caption{
    \rev{Our current knowledge of what assumptions are required for $\|\bm{A}-\widehat{\bm{A}}_{(k)}\| \leq (1+\varepsilon)\|\bm{A}-\bm{A}_{(k)}\|$ to imply $\|f(\bm{A})-f(\widehat{\bm{A}})_{(k)}\| \leq (1+\varepsilon)\|f(\bm{A})-f(\bm{A})_{(k)}\|$.
	A checkmark (\checkmark) indicates that the implication holds under the listed assumptions, and we provide a link to the theorem proving it.
	A cross (\xmark) indicates that a counterexample with the listed assumptions violates the guarantee.
    The question marks (?) indicate open problems.
	The asterisk for the Frobenius norm denotes that we make a slightly stronger assumption for 
    that guarantee; see \Cref{section:frobenius_nuclear_black_box} for more details. See \Cref{section:eigenvalue_blackbox} for a formal description of eigenvalue approximation. }}
\begin{tabular}{@{}lcccc@{}} \toprule
	& \multicolumn{2}{c}{$f$ is operator monotone} & \multicolumn{2}{c}{$f$ is concave, non-decreasing} \\ \cmidrule(lr){2-3} \cmidrule(l){4-5}
	Guarantee
		& \(\bm{A} \succeq \widehat{\bm{A}}\)
		& \(\bm{A} \cancel{\succeq} \widehat{\bm{A}}\)
		& \(\bm{A} \succeq \widehat{\bm{A}}\)
		& \(\bm{A} \cancel{\succeq} \widehat{\bm{A}}\) \\ \midrule
	Nuclear Norm
		& \(\checkmark_{\ref{theorem:nuclear_black_box}}\)
		& \(\text{\xmark}_{\ref{eg:psd-needed-for-frob-nuc}}\)
		& \(\text{\xmark}_{\ref{eg:mono-needed-for-nuc}}\)
		& \(\text{\xmark}_{\ref{eg:psd-needed-for-frob-nuc}}\) \\
	Frobenius Norm
		& \(\checkmark^*_{\ref{theorem:frobenius_black_box}}\)
		& \(\text{\xmark}_{\ref{eg:psd-needed-for-frob-nuc}}\)
		& ?
		& \(\text{\xmark}_{\ref{eg:psd-needed-for-frob-nuc}}\) \\
	\rev{Operator} Norm
		& \(\checkmark_{\ref{theorem:spectral_black_box}}\)
		& \(\checkmark_{\ref{theorem:spectral_black_box}}\)
		& ?
		& \(\text{\xmark}_{\ref{eg:mono-needed-for-spectral}}\) \\
	\rev{Eigenvalue Approx.}
		& \(\checkmark_{\ref{theorem:eigenvalue_black_box}}\)
		& \(\checkmark_{\ref{theorem:eigenvalue_black_box}}\)
		& \(\checkmark_{\ref{theorem:eigenvalue_black_box}}\)
		& \(\checkmark_{\ref{theorem:eigenvalue_black_box}}\) \\ \bottomrule
\end{tabular}
\label{table:summ}
\end{table}

\subsection{Notation}

We consider a symmetric positive semidefinite (SPSD) matrix $\bm{A}$.
$\widehat{\bm{A}}$ usually denotes an approximation to $\bm{A}$ so that $\bm{A} \succeq \widehat{\bm{A}} \succeq \bm{0}$.
For a matrix $\bm{B}$, $\sigma_i(\bm{B})$ denotes the $i^{\text{th}}$ largest singular value \rev{and when $\bm{B}$ is square $\lambda_i(\bm{B})$ denotes the $i^{\text{th}}$ largest (in magnitude) eigenvalue of $\bm{B}$. If $\bm{B} \succeq \bm{0}$ we have $\lambda_i(\bm{B}) = \sigma_i(\bm{B})$.} For conciseness, we drop the argument and let $\lambda_i$ denote $\lambda_i(\bm{A})$ and $\widehat{\lambda}_i$ denote $\lambda_i(\widehat{\bm{A}})$\rev{, when there is no risk of confusion.}
For a matrix $\bm{B}$ we let $\bm{P}_{\bm{B}}$ denote orthogonal projector onto $\range(\bm{B})$.
Hence, if $\bm{V}$ is an orthonormal basis for $\range(\bm{B})$ we have $\bm{P}_{\bm{B}} = \bm{V} \bm{V}^T$.
We let $\bm{B}_{(k)}$ denote the best rank $k$ approximation to $\bm{B}$ obtained by the truncated singular value decomposition.
For $p \in [1,\infty]$ we denote by $\|\cdot\|_{(p)}$ the Schatten $p$-norm. We write $\|\cdot\|_{2} = \|\cdot\|_{(\infty)}$, $\|\cdot\|_{F} = \|\cdot\|_{(2)}$, and $\|\cdot\|_* = \|\cdot\|_{(1)}$ to denote the operator, Frobenius, and nuclear norm respectively. \rev{Recall that the operator norm is sometimes called the \emph{spectral norm}.}

\section{Good Nyström approximations imply good funNyström approximations}\label{section:black_box}
In this section prove our main results: that near\rev{-}optimal Nyström approximations imply near\rev{-}optimal funNyström approximations for the nuclear, Frobenius, and operator norms.
We also prove analogous guarantees for eigenvalue estimation. 
We begin with establishing a few useful lemmas. We start by recalling some basic properties of operator monotone and concave functions. 
\begin{lemma}\label{lemma:opmon_properties}
    Let $f:[0,\infty) \to [0,\infty)$ be a continuous operator monotone function. Then,
    \begin{enumerate}[(i)]
        \item $f$ is concave; \label{enum:opmon_implies_opconcave}
        \item $f \in C^{\infty}(0,\infty)$. \label{eqnum:opmon_implies_smooth}
    \end{enumerate}
\end{lemma}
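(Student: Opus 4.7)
The plan is to derive both claims from L\"owner's integral representation theorem for operator monotone functions, which states that every continuous operator monotone $f\colon[0,\infty)\to\mathbb{R}$ admits the form
\[
    f(x) = \alpha + \beta x + \int_0^\infty \frac{x\lambda}{x+\lambda}\,d\mu(\lambda),
\]
for some $\alpha \in \mathbb{R}$, $\beta \geq 0$, and a positive Borel measure $\mu$ on $(0,\infty)$ satisfying $\int_0^\infty \frac{\lambda}{1+\lambda}\,d\mu(\lambda) < \infty$. This is a classical result and can be cited from a standard reference on matrix analysis (e.g., Bhatia's \emph{Matrix Analysis} or \emph{Positive Definite Matrices}). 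With this representation in hand, both properties reduce to straightforward facts about the kernel $k_\lambda(x) := \frac{x\lambda}{x+\lambda}$.

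For part \eqref{enum:opmon_implies_opconcave}, I would observe that the affine part $\alpha + \beta x$ is concave, and that for each $\lambda > 0$ the kernel can be rewritten as $k_\lambda(x) = \lambda - \frac{\lambda^2}{x+\lambda}$, whose second derivative $-\frac{2\lambda^2}{(x+\lambda)^3}$ is strictly negative on $[0,\infty)$, hence $k_\lambda$ is concave in $x$. Since a positive (weighted) integral of concave functions is concave, $f$ is concave.

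For part \eqref{eqnum:opmon_implies_smooth}, I would differentiate under the integral sign. The $m$-th derivative of the kernel is $\frac{d^m}{dx^m}k_\lambda(x) = \frac{(-1)^{m+1}\, m!\, \lambda^2}{(x+\lambda)^{m+1}}$, and on any compact subinterval $[a,b]\subset(0,\infty)$ this quantity is bounded by a constant multiple of $\min\{\lambda^{-(m-1)},\, \lambda/(a+\lambda)^{m+1}\}$, which is $\mu$-integrable by the integrability condition on $\mu$. Dominated convergence then justifies interchanging differentiation and integration for all $m\geq 1$, showing $f\in C^\infty(0,\infty)$.

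There is no substantial obstacle here; the only mildly delicate point is verifying the integrability bounds uniformly on compacta of $(0,\infty)$ for part \eqref{eqnum:opmon_implies_smooth}, but this is routine given the standing integrability hypothesis on $\mu$. As an alternative that sidesteps these bookkeeping details, one can invoke the Nevanlinna--Pick characterization: operator monotone functions on $(0,\infty)$ extend analytically to the upper half plane, hence are real-analytic and in particular $C^\infty$ on $(0,\infty)$.
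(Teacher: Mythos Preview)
Your argument is correct. Both parts follow cleanly from L\"owner's integral representation, and your alternative route via the Pick/Nevanlinna analytic continuation is also standard and valid. The only quibble is that your stated dominating bound $\min\{\lambda^{-(m-1)},\,\lambda/(a+\lambda)^{m+1}\}$ is a bit imprecise; the cleaner way is to note directly that $\lambda^2/(a+\lambda)^{m+1} \leq C_{a,m}\,\lambda/(1+\lambda)$ uniformly in $\lambda>0$ for every $m\geq 1$, which is exactly the integrability hypothesis on $\mu$.

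The paper, by contrast, does not give an argument at all: it simply cites Bhatia's \emph{Matrix Analysis} (Theorem~V.2.5 for concavity, which actually goes through the stronger statement that operator monotone is equivalent to operator concave on $[0,\infty)$; and pp.~134--135 for smoothness). Your approach is more self-contained and makes the mechanism transparent, at the cost of invoking the integral representation up front. The paper's approach is terser and defers entirely to a standard reference. Both are legitimate; yours would be preferable in a setting where the reader may not have Bhatia to hand, while the paper's is appropriate given that these facts are well known.
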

\begin{proof}
(\textit{\ref*{enum:opmon_implies_opconcave}}) This is due to \cite[Theorem V.2.5]{bhatia}.
(\textit{\ref*{eqnum:opmon_implies_smooth}}) This is due to \cite[p.134-135]{bhatia}. 
\end{proof}
By the above, the following \rev{simple facts} about concave functions also extends to continuous operator monotone functions.

\begin{lemma}\label{lemma:concave_properties}
    Let $f:[0,\infty) \to [0,\infty)$ be a concave function. Then,
    \begin{enumerate}[(i)]
        \item $\frac{f(x)}{x}$ is decreasing for \(x > 0\); \label{enum:decreasing_ratio}
        \item $f(tx) \leq tf(x)$ for $t \geq 1$;\label{enum:t>1}
        \item $f(tx) \geq tf(x)$ for $0 \leq t \leq 1$\rev{;} \label{enum:t<1}
        \item \rev{For $t \geq 0$, the function $f(x) - f(x-t)$ is decreasing. \label{enum:decreasing_difference}}
    \end{enumerate}
\end{lemma}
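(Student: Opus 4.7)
The plan is to prove each of the four items in turn, using only concavity and the range condition $f \geq 0$ on $[0,\infty)$.

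\textbf{For (\textit{i}):} The key observation is that $f(0) \geq 0$, because $f$ maps into $[0,\infty)$. Fix $0 < x < y$ and write $x$ as the convex combination $x = \tfrac{x}{y} \cdot y + \left(1 - \tfrac{x}{y}\right) \cdot 0$. Concavity then gives
\[
  f(x) \;\geq\; \tfrac{x}{y} f(y) + \left(1 - \tfrac{x}{y}\right) f(0) \;\geq\; \tfrac{x}{y} f(y),
\]
so dividing through by $x$ yields $f(x)/x \geq f(y)/y$.

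\textbf{For (\textit{ii}) and (\textit{iii}):} These are immediate consequences of (\textit{i}). For (\textit{ii}) with $t \geq 1$ and $x > 0$, apply (\textit{i}) to the pair $x \leq tx$: $f(tx)/(tx) \leq f(x)/x$, which rearranges to $f(tx) \leq t f(x)$. The case $x = 0$ is handled by $f(0)(t-1) \geq 0$. Item (\textit{iii}) is symmetric, applying (\textit{i}) instead to $tx \leq x$.

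\textbf{For (\textit{iv}):} Fix $t \geq 0$; the case $t = 0$ is trivial, so assume $t > 0$, and consider $y > x \geq t$. The claim is that $f(y) - f(y-t) \leq f(x) - f(x-t)$, equivalently $f(x) + f(y-t) \geq f(x-t) + f(y)$. Set $\alpha = (y - x)/(y + t - x - t) = (y-x)/(y-(x-t)) \in (0,1)$, and observe the identities
\[
  x \;=\; \alpha(x-t) + (1-\alpha) \cdot y, \qquad y - t \;=\; (1-\alpha)(x-t) + \alpha \cdot y,
\]
which may be verified by direct computation (both reduce to $\alpha t + (x-t) = x$ after simplification). Applying concavity to each identity and summing the two resulting inequalities gives the desired bound. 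This argument treats the overlapping case ($y-t < x$) and the non-overlapping case ($y-t \geq x$) uniformly.

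The only mild obstacle is (\textit{iv}), since one might be tempted to use the standard three-point chord slope inequality for concave functions, which only applies directly when the intervals $[x-t,x]$ and $[y-t,y]$ do not overlap. The convex combination trick above sidesteps this case analysis.
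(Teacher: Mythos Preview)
Your proof is correct. For parts (\textit{i})--(\textit{iii}) you give the standard self-contained arguments that the paper simply cites from external references, so there is no meaningful difference there. For part (\textit{iv}) your route genuinely differs from the paper's: the paper invokes the well-known fact that the secant slope $R(x_1,x_2)=\frac{f(x_2)-f(x_1)}{x_2-x_1}$ of a concave function is monotone in each argument, and then compares $R(y-t,y)$ with $R(x-t,x)$. You instead observe that $\{x,y-t\}$ and $\{x-t,y\}$ have the same sum with the latter pair more spread out, and apply concavity directly via explicit convex combinations---essentially a two-point Karamata argument. Your version is more self-contained and, as you note, handles the overlapping and non-overlapping cases uniformly without appealing to a cited fact; the paper's version is shorter if one is willing to quote the slope-monotonicity property. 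One small typo: in your definition of $\alpha$ the first expression $(y-x)/(y+t-x-t)$ simplifies to $1$, not to $(y-x)/(y-(x-t))$; the second expression is the correct one and is what your identity checks actually use, so the argument is unaffected.
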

\begin{proof}
    \rev{(\textit{\ref*{enum:decreasing_ratio}})-(\textit{\ref*{enum:t<1}}) are straighforward to prove; e.g., see the answers in \cite{f(x)/x}, \cite{t>1}, and \cite{t<1} respectively. (\textit{\ref*{enum:decreasing_difference}}) is a simple consequence of the fact that $R(x_1,x_2) = \frac{f(x_2) - f(x_1)}{x_2 - x_1}$ is decreasing in $x_2$ for any fixed $x_1$ (or vice versa) \cite[p.431]{farwig}. Hence, $R(y-t,y) \geq R(x-t,x)$ for $x \geq y$, which yields the desired result. }



\end{proof}

The following lemma provides an upper bound for the Schatten norm difference of two ordered SPSD matrices. 
In particular, it bounds $\|\bm{B}-\bm{C}\|_{(p)}^p$ by the difference of the Schatten norms of $\bm{B}$ and $\bm{C}$.
The latter is easier to analyze because it allows us to separate the eigenvalues of $\bm{B}$ and $\bm{C}$.
This will be especially useful for proving results for the nuclear and Frobenius norm.
\begin{lemma}\label{lemma:schatten_difference}
    Let $\bm{B} \succeq \bm{C} \succeq \bm0$, then for $p \geq 1$
    \begin{equation*}
        \|\bm{B} - \bm{C}\|_{(p)} \leq \left(\|\bm{B}\|_{(p)}^p - \|\bm{C}\|_{(p)}^p\right)^{1/p}.
    \end{equation*}
    When $p = 1$ the inequality becomes an equality. 
\end{lemma}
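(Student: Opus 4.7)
The plan is to split into the cases $p = 1$ and $p > 1$. The $p = 1$ case is immediate and gives the stated equality; for $p > 1$ the inequality reduces to a classical superadditivity statement for Schatten $p$-norms on the PSD cone.

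For $p = 1$, because $\bm{B} - \bm{C} \succeq \bm{0}$ the nuclear norm coincides with the trace, so by linearity
\[
\|\bm{B} - \bm{C}\|_* = \tr(\bm{B} - \bm{C}) = \tr(\bm{B}) - \tr(\bm{C}) = \|\bm{B}\|_* - \|\bm{C}\|_*,
\]
which is the claimed equality.

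For $p > 1$, set $\bm{E} := \bm{B} - \bm{C} \succeq \bm{0}$; raising the target inequality to the $p$-th power, it is equivalent to
\[
\tr(\bm{C}^p) + \tr(\bm{E}^p) \;\leq\; \tr((\bm{C} + \bm{E})^p) \qquad \text{for PSD } \bm{C}, \bm{E},\ p \geq 1.
\]
This is a Rotfel'd-type superadditivity statement for $\bm{X} \mapsto \|\bm{X}\|_{(p)}^p$, a well-known corollary of the Bourin--Uchiyama theorem applied to the convex function $f(x) = x^p$ with $f(0) = 0$ (see, e.g., Bhatia's \emph{Matrix Analysis}). For the subrange $p \in [1,2]$ a short self-contained argument works: split
\[
\tr((\bm{C} + \bm{E})^p) = \tr\bigl(\bm{C}(\bm{C}+\bm{E})^{p-1}\bigr) + \tr\bigl(\bm{E}(\bm{C}+\bm{E})^{p-1}\bigr),
\]
use that $x \mapsto x^{p-1}$ is operator monotone on $[0,\infty)$ for $p - 1 \in [0,1]$, combined with $\bm{C}+\bm{E} \succeq \bm{C}$ and $\bm{C}+\bm{E} \succeq \bm{E}$ and the identity $\tr(\bm{X}\bm{Y}) = \tr(\bm{X}^{1/2}\bm{Y}\bm{X}^{1/2})$ for PSD $\bm{X}$, to lower-bound each summand by $\tr(\bm{C}^p)$ and $\tr(\bm{E}^p)$ respectively.

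The main obstacle is the range $p > 2$, where $x^{p-1}$ fails to be operator monotone and the elementary monotonicity argument above breaks down. The natural alternative through eigenvalue majorization also fails: Lidskii--Wielandt combined with Karamata applied to the convex function $x \mapsto x^p$ yields $\sum_i (\lambda_i(\bm{B}) - \lambda_i(\bm{C}))^p \leq \sum_i \lambda_i(\bm{B} - \bm{C})^p$, which is pointed in the wrong direction to conclude what is claimed. For this range I would therefore invoke the Rotfel'd / Bourin--Uchiyama inequality as a black box rather than attempt a first-principles proof.
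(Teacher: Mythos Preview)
Your proposal is correct and takes essentially the same route as the paper: both reduce the claim to the superadditivity inequality $\tr((\bm{X}+\bm{Y})^p) \geq \tr(\bm{X}^p) + \tr(\bm{Y}^p)$ for PSD $\bm{X},\bm{Y}$ and then substitute $\bm{X}=\bm{B}-\bm{C}$, $\bm{Y}=\bm{C}$, with the $p=1$ equality handled identically via trace linearity. The only difference is that the paper cites this inequality directly as a result of McCarthy (valid for all $p\geq 1$), so your case split between $p\in[1,2]$ and $p>2$ and the appeal to the more general Bourin--Uchiyama theorem are unnecessary---the ``obstacle'' you identify for $p>2$ is already resolved by McCarthy's 1967 lemma.
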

\begin{proof}
By a result by McCarthy \cite[Lemma 2.6]{mccarthy} we know that if $\bm{X},\bm{Y} \succeq \bm{0}$,
\begin{equation*}
    \tr((\bm{X} + \bm{Y})^p) \geq \tr(\bm{X}^p) + \tr(\bm{Y}^p).
\end{equation*}
Setting $\bm{X} = \bm{B} - \bm{C}$ and $\bm{Y} = \bm{C}$ yields the desired inequality. 
When $p=1$, using the fact that $\bm{B} \succeq \bm{C} \succeq \bm{0}$, we find that
\begin{equation*}
    \|\bm{B} - \bm{C}\|_* = \tr(\bm{B} - \bm{C}) = \tr(\bm{B}) - \tr(\bm{C}) = \|\bm{B}\|_* - \|\bm{C}\|_*,
\end{equation*}
\rev{as required.}
\end{proof}
    

\subsection{\rev{Nyström to funNyström:} Frobenius and nuclear norm guarantees}\label{section:frobenius_nuclear_black_box}
In this section we prove two results.
First, we prove \Cref{theorem:nuclear_black_box_intro}.
\begin{theorem}[\Cref{theorem:nuclear_black_box_intro} restated]
\label{theorem:nuclear_black_box}
    Suppose that, for $\varepsilon \geq 0$, $\bm{A} \succeq \widehat{\bm{A}} \succeq \bm{0}$ satisfy
    \begin{equation*}
        \|\bm{A}-\widehat{\bm{A}}_{(k)}\|_* \leq (1+\varepsilon)\|\bm{A}-\bm{A}_{(k)}\|_*.
    \end{equation*}
    Then for any continuous operator monotone function $f:[0,\infty) \to [0,\infty)$,
    \begin{equation*}
        \|f(\bm{A})-f(\widehat{\bm{A}})_{(k)}\|_* \leq (1+\varepsilon)\|f(\bm{A})-f(\bm{A})_{(k)}\|_*.
    \end{equation*}
\end{theorem}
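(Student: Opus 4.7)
The plan is to reduce the matrix inequality to a scalar inequality on the eigenvalues of $\bm{A}$ and $\widehat{\bm{A}}$, where the concavity of $f$ can be exploited directly. Since $f$ is operator monotone and $\bm{A} \succeq \widehat{\bm{A}} \succeq \bm{0}$, we get $f(\bm{A}) \succeq f(\widehat{\bm{A}}) \succeq \bm{0}$, and then $f(\bm{A}) \succeq f(\widehat{\bm{A}})_{(k)} \succeq \bm{0}$ since rank-$k$ truncation only drops nonnegative eigenvalues. This puts us in the setting of \Cref{lemma:schatten_difference} at $p=1$, which turns the three relevant nuclear norms into sums of eigenvalues:
\[
\|f(\bm{A}) - f(\widehat{\bm{A}})_{(k)}\|_* = \sum_{i=1}^n f(\lambda_i) - \sum_{i=1}^k f(\widehat{\lambda}_i), \qquad \|f(\bm{A}) - f(\bm{A})_{(k)}\|_* = \sum_{i>k} f(\lambda_i),
\]
and rewrites the hypothesis as $\sum_{i=1}^k(\lambda_i - \widehat{\lambda}_i) \leq \varepsilon \sum_{i>k}\lambda_i$. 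After subtracting $\sum_{i>k} f(\lambda_i)$ from both sides of the desired conclusion, the theorem reduces to the scalar inequality
\[
\sum_{i=1}^k \bigl(f(\lambda_i) - f(\widehat{\lambda}_i)\bigr) \;\leq\; \varepsilon \sum_{i>k} f(\lambda_i).
\]

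Next I would bound each term on the left. From $\bm{A} \succeq \widehat{\bm{A}}$ and Weyl's inequality we have $\lambda_i \geq \widehat{\lambda}_i \geq 0$. For $i \leq k$ with $\lambda_i > 0$, \Cref{lemma:concave_properties}\,(\textit{\ref*{enum:t<1}}) applied with $t = \widehat{\lambda}_i/\lambda_i \in [0,1]$ gives $f(\widehat{\lambda}_i) \geq (\widehat{\lambda}_i/\lambda_i) f(\lambda_i)$, hence $f(\lambda_i) - f(\widehat{\lambda}_i) \leq \tfrac{f(\lambda_i)}{\lambda_i}(\lambda_i - \widehat{\lambda}_i)$. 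The key step is to then pivot through the common factor $f(\lambda_k)/\lambda_k$: by \Cref{lemma:concave_properties}\,(\textit{\ref*{enum:decreasing_ratio}}), $f(x)/x$ is decreasing, so $f(\lambda_i)/\lambda_i \leq f(\lambda_k)/\lambda_k$ for $i \leq k$, while $f(\lambda_j) \geq (f(\lambda_k)/\lambda_k)\,\lambda_j$ for $j > k$. Chaining these bounds with the eigenvalue-level hypothesis produces
\[
\sum_{i=1}^k \bigl(f(\lambda_i) - f(\widehat{\lambda}_i)\bigr) \;\leq\; \tfrac{f(\lambda_k)}{\lambda_k}\sum_{i=1}^k (\lambda_i - \widehat{\lambda}_i) \;\leq\; \varepsilon\,\tfrac{f(\lambda_k)}{\lambda_k}\sum_{i>k}\lambda_i \;\leq\; \varepsilon \sum_{i>k} f(\lambda_i),
\]
which is exactly the required scalar inequality.

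A couple of small details should be handled in passing. If $\lambda_k = 0$, then $\lambda_i = \widehat{\lambda}_i = 0$ for all $i \geq k$, both sides of the target inequality vanish trivially, and the pivot step is only needed when $\lambda_k > 0$. Note that the smoothness half of \Cref{lemma:opmon_properties} is not used; only concavity of $f$ and the two scalar consequences in \Cref{lemma:concave_properties} enter. The main obstacle I anticipate is spotting the pivot through $f(\lambda_k)/\lambda_k$: this is what allows the ``in-rank'' deficit on the left to be compared against the ``tail'' $\sum_{i>k} f(\lambda_i)$ on the right. Once this pivot is in hand, the remaining work is bookkeeping enabled by \Cref{lemma:schatten_difference} with $p=1$.
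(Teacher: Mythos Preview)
Your proposal is correct and follows essentially the same approach as the paper: reduce to eigenvalue sums via \Cref{lemma:schatten_difference} with $p=1$ (using operator monotonicity to get $f(\bm{A})\succeq f(\widehat{\bm{A}})_{(k)}$), bound $f(\lambda_i)-f(\widehat{\lambda}_i)$ using concavity via \Cref{lemma:concave_properties}\,(\textit{\ref*{enum:t<1}}), and then use the monotonicity of $f(x)/x$ from \Cref{lemma:concave_properties}\,(\textit{\ref*{enum:decreasing_ratio}}) to compare the top-$k$ deficit to the tail. The only cosmetic difference is that you route everything through the single pivot $f(\lambda_k)/\lambda_k$, whereas the paper uses the pairwise inequality $\lambda_j/\lambda_i \leq f(\lambda_j)/f(\lambda_i)$ for each $i\leq k$, $j>k$ and then sums; the paper also packages the argument as the $p=1$ case of a general Schatten-$p$ statement. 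One minor imprecision: in the degenerate case $\lambda_k=0$ the right-hand side $\varepsilon\sum_{i>k}f(\lambda_i)=\varepsilon(n-k)f(0)$ need not vanish, but the hypothesis forces $\lambda_i=\widehat{\lambda}_i$ for all $i\leq k$, so the left-hand side is $0$ and the inequality still holds.
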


Additionally, we prove an analogous guarantee for the Frobenius norm.
\begin{theorem}\label{theorem:frobenius_black_box}
    Suppose that, for $\varepsilon \geq 0$, $\bm{A} \succeq \widehat{\bm{A}} \succeq \bm{0}$ satisfy
    \begin{equation*}
        \|\bm{A}\|_F^2-\|\widehat{\bm{A}}_{(k)}\|_F^2 \leq (1+\varepsilon)\|\bm{A}-\bm{A}_{(k)}\|_F^2.
    \end{equation*}
    Then for any continuous operator monotone function $f:[0,\infty) \to [0,\infty)$,
    \begin{equation*}
        \|f(\bm{A})-f(\widehat{\bm{A}})_{(k)}\|_F^2 \leq (1+\varepsilon)\|f(\bm{A})-f(\bm{A})_{(k)}\|_F^2.
    \end{equation*}
\end{theorem}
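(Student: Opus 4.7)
The plan is to mirror the nuclear-norm argument for \Cref{theorem:nuclear_black_box} but with \Cref{lemma:schatten_difference} applied at $p = 2$, and then reduce the theorem to a scalar eigenvalue inequality that is handled by a short ratio argument. First, I would use that $\bm{A} \succeq \widehat{\bm{A}} \succeq \bm{0}$ and $f$ is operator monotone to establish the Loewner chain $f(\bm{A}) \succeq f(\widehat{\bm{A}}) \succeq f(\widehat{\bm{A}})_{(k)} \succeq \bm{0}$, the last inequality holding because the truncated SPSD matrix is always Loewner-dominated by the original. Then \Cref{lemma:schatten_difference} with $p=2$, applied to $(f(\bm{A}), f(\widehat{\bm{A}})_{(k)})$, together with the observation that since $f$ is non-negative and non-decreasing the ordered eigenvalues of $f(\widehat{\bm{A}})$ are $f(\widehat{\lambda}_1) \geq \cdots \geq f(\widehat{\lambda}_n)$, gives
\begin{equation*}
\|f(\bm{A}) - f(\widehat{\bm{A}})_{(k)}\|_F^2 \;\leq\; \|f(\bm{A})\|_F^2 - \|f(\widehat{\bm{A}})_{(k)}\|_F^2 \;=\; \sum_i f(\lambda_i)^2 - \sum_{i=1}^{k} f(\widehat{\lambda}_i)^2.
\end{equation*}
Since $\|f(\bm{A}) - f(\bm{A})_{(k)}\|_F^2 = \sum_{i > k} f(\lambda_i)^2$ and the hypothesis rearranges to $\sum_{i=1}^k (\lambda_i^2 - \widehat{\lambda}_i^2) \leq \varepsilon \sum_{i > k} \lambda_i^2$, the theorem reduces to showing the scalar inequality
\begin{equation*}
\sum_{i=1}^{k}\bigl(f(\lambda_i)^2 - f(\widehat{\lambda}_i)^2\bigr) \;\leq\; \varepsilon \sum_{j > k} f(\lambda_j)^2,
\end{equation*}
with the additional free information $\widehat{\lambda}_i \leq \lambda_i$ for all $i$, coming from Weyl applied to $\bm{A} \succeq \widehat{\bm{A}}$.

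To prove this scalar inequality I would apply a pointwise squeeze based on the decreasing ratio of $f(x)/x$, which follows from \Cref{lemma:opmon_properties}\emph{(i)} and \Cref{lemma:concave_properties}\emph{(i)}. For each $i \leq k$ with $\widehat{\lambda}_i > 0$, the decreasing-ratio property reads $\widehat{\lambda}_i f(\lambda_i) \leq \lambda_i f(\widehat{\lambda}_i)$; squaring and rearranging yields the clean pointwise bound
\begin{equation*}
f(\lambda_i)^2 - f(\widehat{\lambda}_i)^2 \;\leq\; \frac{f(\lambda_i)^2}{\lambda_i^2}\bigl(\lambda_i^2 - \widehat{\lambda}_i^2\bigr),
\end{equation*}
while the edge case $\widehat{\lambda}_i = 0$ is immediate from $f(0) \geq 0$. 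Next, because $\lambda_i \geq \lambda_{k+1} \geq \lambda_j$ whenever $i \leq k < j$, the same monotonicity of $f(x)/x$ supplies the uniform rate estimate $f(\lambda_i)^2/\lambda_i^2 \leq f(\lambda_{k+1})^2/\lambda_{k+1}^2 \leq f(\lambda_j)^2/\lambda_j^2$. Combining the pointwise bound with this rate estimate and invoking the hypothesis gives the chain
\begin{equation*}
\sum_{i=1}^{k}\bigl(f(\lambda_i)^2 - f(\widehat{\lambda}_i)^2\bigr) \;\leq\; \frac{f(\lambda_{k+1})^2}{\lambda_{k+1}^2}\sum_{i=1}^{k}(\lambda_i^2 - \widehat{\lambda}_i^2) \;\leq\; \varepsilon\,\frac{f(\lambda_{k+1})^2}{\lambda_{k+1}^2}\sum_{j > k}\lambda_j^2 \;\leq\; \varepsilon \sum_{j > k} f(\lambda_j)^2,
\end{equation*}
which is exactly the reduced inequality and closes the proof.

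The main obstacle is really just identifying the correct pointwise bound; once one observes that $(f(\lambda)^2 - f(\widehat{\lambda})^2)/(\lambda^2 - \widehat{\lambda}^2) \leq f(\lambda)^2/\lambda^2$ is equivalent to monotonicity of $f(x)/x$, the rest is a direct squeeze through the ratio at $\lambda_{k+1}$. The degenerate case $\lambda_{k+1} = 0$, which would make these intermediate ratios ill-defined, is harmless since the hypothesis then forces each $\lambda_i^2 - \widehat{\lambda}_i^2 = 0$ for $i \leq k$, and both sides of the target inequality vanish. It is worth noting that operator monotonicity enters only to secure the initial Loewner ordering $f(\bm{A}) \succeq f(\widehat{\bm{A}})$; the scalar squeeze itself goes through for any concave $f$ with $f(0) \geq 0$, consistent with the ``?'' entry for the Frobenius norm with concave non-decreasing $f$ in \Cref{table:summ}, where the obstruction is precisely the initial Loewner step.
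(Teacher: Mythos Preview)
Your proof is correct and follows essentially the same approach as the paper, which establishes the result as the $p=2$ case of a general Schatten-$p$ theorem: both arguments apply \Cref{lemma:schatten_difference} to the Loewner chain $f(\bm{A}) \succeq f(\widehat{\bm{A}})_{(k)}$ and then reduce to the same scalar inequality via the decreasing-ratio property of $f(x)/x$. The only cosmetic difference is that the paper parameterizes via $\delta_i = 1 - (\widehat{\lambda}_i/\lambda_i)^p$ and sums the ratio inequalities $\lambda_j^p/\lambda_i^p \leq f(\lambda_j)^p/f(\lambda_i)^p$ directly, whereas you route the same bound through the intermediate value $f(\lambda_{k+1})/\lambda_{k+1}$; the content is identical.
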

We note that, by \Cref{lemma:schatten_difference}, $\|\bm{A}\|_F^2 - \|\widehat{\bm{A}}_{(k)}\|_F^2 \leq (1+\varepsilon) \|\bm{A} - \bm{A}_{(k)}\|_F^2$ is a \emph{stronger} assumption than $\|\bm{A} - \widehat{\bm{A}}\|_F^2 \leq (1+\varepsilon) \|\bm{A} - \bm{A}_{(k)}\|_F^2$. I.e., \Cref{theorem:frobenius_black_box} requires assuming more than that $\widehat{\bm{A}}_{(k)}$ is a near\rev{-}optimal low-rank approximation to $\bm{A}$ in the Frobenius norm. However, the assumption is still reasonable because, as we show in Section~\ref{section:gray_box}, many standard low-rank algorithms return results that satisfy this stronger guarantee.

We prove \Cref{theorem:nuclear_black_box,theorem:frobenius_black_box} as special cases of a single general theorem about Schatten norms.
In particular, by \Cref{lemma:schatten_difference} we know that $\|\bm{A}\|_* - \|\widehat{\bm{A}}\|_* = \|\bm{A} - \widehat{\bm{A}}\|_*$, so the following theorem about Schatten norms immediately implies both \Cref{theorem:nuclear_black_box,theorem:frobenius_black_box} by taking \(p=1\) and \(p=2\), respectively.


\begin{theorem}\label{theorem:schatten_black_box}
    Fix $p \in [1,\infty)$. Suppose that, for $\varepsilon \geq 0$, $\bm{A} \succeq \widehat{\bm{A}} \succeq \bm{0}$ satisfy
    \begin{equation*}
        \|\bm{A}\|_{(p)}^p-\|\widehat{\bm{A}}_{(k)}\|_{(p)}^p \leq (1+\varepsilon)\|\bm{A}-\bm{A}_{(k)}\|_{(p)}^{p}
    \end{equation*}
   Then for any continuous operator monotone function $f:[0,\infty) \to [0,\infty)$,
    \begin{equation*}
        \|f(\bm{A})-f(\widehat{\bm{A}})_{(k)}\|_{(p)}^p \leq (1+\varepsilon)\|f(\bm{A})-f(\bm{A})_{(k)}\|_{(p)}^p.
    \end{equation*}
\end{theorem}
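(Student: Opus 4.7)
The plan is to reduce the Schatten-$p$-norm claim to a scalar inequality about the sorted eigenvalues of $\bm{A}$ and $\widehat{\bm{A}}$, and then exploit the decreasing-ratio property of concave functions from \Cref{lemma:concave_properties}.

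I would first combine operator monotonicity with non-negativity of $f$: since $\bm{A} \succeq \widehat{\bm{A}} \succeq \bm{0}$, operator monotonicity gives $f(\bm{A}) \succeq f(\widehat{\bm{A}}) \succeq \bm{0}$, and because $f(\widehat{\bm{A}})_{(k)}$ is the best rank-$k$ approximation to an SPSD matrix, $\bm{0} \preceq f(\widehat{\bm{A}})_{(k)} \preceq f(\widehat{\bm{A}}) \preceq f(\bm{A})$. Applying \Cref{lemma:schatten_difference} with $\bm{B} = f(\bm{A})$ and $\bm{C} = f(\widehat{\bm{A}})_{(k)}$ reduces the target inequality to
\[
\|f(\bm{A})\|_{(p)}^p - \|f(\widehat{\bm{A}})_{(k)}\|_{(p)}^p \;\le\; (1+\varepsilon)\,\|f(\bm{A})-f(\bm{A})_{(k)}\|_{(p)}^p.
\]
Expanding both norms as sums over the sorted eigenvalues $\lambda_i = \lambda_i(\bm{A})$ and $\widehat{\lambda}_i = \lambda_i(\widehat{\bm{A}})$, and rewriting the hypothesis the same way, the problem collapses after cancellation to showing
\[
\sum_{i=1}^k \bigl(f(\lambda_i)^p - f(\widehat{\lambda}_i)^p\bigr) \;\le\; \varepsilon \sum_{i>k} f(\lambda_i)^p,
\]
given that \(\sum_{i=1}^k (\lambda_i^p - \widehat{\lambda}_i^p) \le \varepsilon \sum_{i>k}\lambda_i^p\). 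By Weyl's inequality, $\lambda_i \ge \widehat{\lambda}_i \ge 0$, so each summand is non-negative.

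The crux is the following slope bound: for $0 \le b \le a$,
\[
f(a)^p - f(b)^p \;\le\; \frac{f(a)^p}{a^p}\bigl(a^p - b^p\bigr).
\]
After cross-multiplying, this is equivalent to $g(y) := f(y^{1/p})^p/y = \bigl(f(y^{1/p})/y^{1/p}\bigr)^p$ being decreasing in $y>0$. This follows because $x \mapsto f(x)/x$ is decreasing by \Cref{lemma:concave_properties}, $y \mapsto y^{1/p}$ is increasing, and raising a non-negative decreasing function to the $p$-th power preserves monotonicity. The boundary cases $a=0$ or $b=0$ are trivial since $f(0)\ge 0$.

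With the slope bound in hand, for each $i \le k$ the inequalities $\lambda_i \ge \lambda_k$ and the same decreasing-ratio property yield
\[
f(\lambda_i)^p - f(\widehat{\lambda}_i)^p \;\le\; \frac{f(\lambda_i)^p}{\lambda_i^p}\bigl(\lambda_i^p - \widehat{\lambda}_i^p\bigr) \;\le\; \frac{f(\lambda_k)^p}{\lambda_k^p}\bigl(\lambda_i^p - \widehat{\lambda}_i^p\bigr).
\]
Summing over $i \le k$ and invoking the hypothesis bounds the left side of the goal by $\varepsilon \tfrac{f(\lambda_k)^p}{\lambda_k^p}\sum_{i>k}\lambda_i^p$, after which the reverse inequality $\tfrac{f(\lambda_k)^p}{\lambda_k^p}\lambda_i^p \le f(\lambda_i)^p$ for $i>k$ (a direct use of the decreasing ratio since $\lambda_i \le \lambda_k$) closes the chain. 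The degenerate case $\lambda_k = 0$ forces $\lambda_i = \widehat{\lambda}_i = 0$ for $i > k$ and, via the hypothesis together with $\lambda_i \ge \widehat{\lambda}_i$, equality for $i \le k$ as well, so both sides of the goal vanish. The main obstacle is the slope bound; once it is established the rest is algebraic bookkeeping.
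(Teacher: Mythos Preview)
Your proof is correct and follows essentially the same route as the paper: both reduce via \Cref{lemma:schatten_difference} to the scalar inequality $\sum_{i\le k}(f(\lambda_i)^p-f(\widehat{\lambda}_i)^p)\le\varepsilon\sum_{i>k}f(\lambda_i)^p$, and both establish it using the decreasing-ratio property of concave functions from \Cref{lemma:concave_properties}. The only cosmetic difference is that the paper parameterizes via $\delta_i=(\lambda_i^p-\widehat{\lambda}_i^p)/\lambda_i^p$ and bounds each $\delta_i f(\lambda_i)^p$ directly against the tail sum, whereas you route the same bound through the pivot $f(\lambda_k)^p/\lambda_k^p$; and one tiny imprecision: in the degenerate case $\lambda_k=0$ the right-hand side need not vanish when $f(0)>0$, but since the left-hand side does vanish the inequality still holds.
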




\begin{proof}[Proof of \Cref{theorem:schatten_black_box}]
    Let $\lambda_i$ and $\widehat{\lambda_i}$ denote the $i^\text{th}$ largest eigenvalues of $\bm{A}$ and $\widehat{\bm{A}}$, respectively. Our assumption on $\widehat{\bm{A}}$ implies that
\begin{equation}\label{eq:tracediff}
    \sum\limits_{i=1}^k (\lambda_i^p - \widehat{\lambda}_i^p) \leq \varepsilon \sum\limits_{i=k+1}^n \lambda_i^p.
\end{equation}
By Weyl’s monotonicity principle, $\bm{A} \succeq \widehat{\bm{A}}$ implies that $\lambda_i \geq \widehat{\lambda}_i$ \cite[Corollary 7.7.4 (c)]{matrixanalysis}.
Let us define $(1-\delta_i) = \left(\frac{\widehat{\lambda}_i}{\lambda_i}\right)^p$ for $\delta_i \in [0,1]$ for $i = 1,\ldots,k$.
Hence, \eqref{eq:tracediff} implies that
\begin{equation}\label{eq:tracediff2}
    \sum\limits_{i=1}^k \delta_i \lambda_i^p \leq \varepsilon \sum\limits_{i=k+1}^n \lambda_i^p.
\end{equation}

By \Cref{lemma:concave_properties} (\textit{\ref*{enum:t<1}}), we know that \(f(\widehat{\lambda}_i)^p \geq (1-\delta_i)f(\lambda_i)^p\), and so
\begin{equation}\label{eq:nuclear_upperbound}
    \sum\limits_{i=1}^k (f(\lambda_i)^p - f(\widehat{\lambda}_i)^p)
    \leq \sum\limits_{i=1}^k \delta_if(\lambda_i)^p.
\end{equation}
\Cref{lemma:concave_properties} (\textit{\ref*{enum:decreasing_ratio}}) also shows that, for all $i = 1,\ldots,k$ and \(j = k+1,\ldots, n\), $\frac{\lambda_{j}}{\lambda_i} \leq \frac{f(\lambda_{j})}{f(\lambda_i)}$. \rev{For all such pairs $(i,j)$ we have $\frac{\lambda_{j}^p}{\delta_i\lambda_i^p} \leq \frac{f(\lambda_{j})^p}{\delta_if(\lambda_i)^p}$. By summing over all $j$ we get
\begin{equation*}
    \frac{\sum\limits_{j=k+1}^n\lambda_{j}^p}{\delta_i\lambda_i^p} \leq \frac{\sum\limits_{j=k+1}^nf(\lambda_{j})^p}{\delta_if(\lambda_i)^p}.
\end{equation*}
Equivalently,
\begin{equation}\label{eq:nuclear_gap}
    \delta_i f(\lambda_i)^p \leq \frac{\delta_i \lambda_i^p}{\sum\limits_{j=k+1}^n\lambda_{j}^p} \sum\limits_{j=k+1}^nf(\lambda_{j})^p.
\end{equation}}
Combining \eqref{eq:nuclear_gap} with \eqref{eq:nuclear_upperbound} and \eqref{eq:tracediff2} gives
\begin{align*}
    \sum\limits_{i=1}^k (f(\lambda_i)^p - f(\widehat{\lambda}_i)^p)
    &\leq \sum_{i=1}^k \delta_i f(\lambda_i)^p \leq \frac{\sum\limits_{i=1}^k\delta_i\lambda_i^p}{\sum\limits_{j=k+1}^n\lambda_{j}^p} \sum\limits_{j=k+1}^nf(\lambda_{j})^p \leq \varepsilon \sum\limits_{i=k+1}^n f(\lambda_i)^p.
    \numberthis \label{eq:nuclear_desired_inequality}
\end{align*}
Hence, \eqref{eq:nuclear_desired_inequality} implies that
\begin{equation*}
    \|f(\bm{A})\|_{(p)}^p - \|f(\widehat{\bm{A}})_{(k)}\|_{(p)}^p \leq (1+\varepsilon)\|f(\bm{A}) - f(\bm{A})_{(k)}\|_{(p)}^p.
\end{equation*}
We have $f(\bm{A}) \succeq f(\widehat{\bm{A}})_{(k)} \succeq \bm{0}$ since $f$ is operator monotone. The desired inequality follows from applying \Cref{lemma:schatten_difference}.
\end{proof}

\subsection{\rev{Nyström to funNyström:} Operator norm guarantees}

We next \rev{prove} an analogue to \Cref{theorem:nuclear_black_box,theorem:frobenius_black_box} for the operator norm. Our operator norm result is actually more general since we do not require access to an approximation $\widehat{\bm{A}}$ satisfying $\widehat{\bm{A}} \preceq \bm{A}$. The result applies to \emph{any} $\bm{B}$ such that $\|\bm{A} - \bm{B}\|_2 \leq (1+\varepsilon)\|\bm{A} - \bm{A}_{(k)}\|_2$.
We do not even  require that $\bm{B}$ is rank $k$.

\begin{theorem}\label{theorem:spectral_black_box}
    Suppose that, for $\varepsilon \geq 0$, $\bm{A}, \bm{B} \succeq \bm{0}$ satisfy
    \begin{equation*}
        \|\bm{A}-\bm{B}\|_2 \leq (1+\varepsilon) \|\bm{A} - \bm{A}_{(k)}\|_2. 
    \end{equation*}
     Let $r = \rank(\bm{B})$. For any continuous operator monotone function $f:[0,\infty) \to [0,\infty)$,
    \begin{equation*}
        \|f(\bm{A})-f(\bm{B})_{(r)}\|_2 \leq (1+\varepsilon) \|f(\bm{A}) - f(\bm{A})_{(k)}\|_2. 
    \end{equation*}
\end{theorem}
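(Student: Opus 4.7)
The plan is to establish the core inequality $\|f(\bm{A}) - f(\bm{B})_{(r)}\|_2 \leq f(\|\bm{A}-\bm{B}\|_2)$ and then apply concavity to convert the hypothesis into the desired $(1+\varepsilon)$ factor. Let $t = \|\bm{A}-\bm{B}\|_2$. Since $f$ is non-decreasing, $\|f(\bm{A}) - f(\bm{A})_{(k)}\|_2 = f(\lambda_{k+1})$, and the hypothesis gives $t \leq (1+\varepsilon)\lambda_{k+1}$. Because $f$ is concave with $f(0) \geq 0$, \Cref{lemma:concave_properties}~(\textit{ii}) yields $f(t) \leq f((1+\varepsilon)\lambda_{k+1}) \leq (1+\varepsilon) f(\lambda_{k+1})$, so the core inequality would immediately imply the theorem.

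To prove the core inequality, I would start from the Loewner bounds $\bm{A} \preceq \bm{B} + t\bm{I}$ and $\bm{B} \preceq \bm{A} + t\bm{I}$, which follow from $\pm(\bm{A}-\bm{B}) \preceq t\bm{I}$. Operator monotonicity of $f$ gives $f(\bm{A}) \preceq f(\bm{B}+t\bm{I})$ and $f(\bm{B}) \preceq f(\bm{A}+t\bm{I})$. Since $\bm{B}$ commutes with $t\bm{I}$, the difference $f(\bm{B}+t\bm{I}) - f(\bm{B})$ is diagonal in the eigenbasis of $\bm{B}$ with entries $f(\lambda_i(\bm{B})+t) - f(\lambda_i(\bm{B}))$; by \Cref{lemma:concave_properties}~(\textit{iv}) (applied with $y = \lambda_i(\bm{B})$), each such entry is at most $f(t) - f(0)$, so $f(\bm{B}+t\bm{I}) - f(\bm{B}) \preceq (f(t)-f(0))\bm{I}$. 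The symmetric argument yields $f(\bm{A}+t\bm{I}) - f(\bm{A}) \preceq (f(t)-f(0))\bm{I}$. Chaining gives $\pm(f(\bm{A}) - f(\bm{B})) \preceq (f(t)-f(0))\bm{I}$.

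Next, I would translate from $f(\bm{B})$ to $f(\bm{B})_{(r)}$. Since $\bm{B}$ has rank exactly $r$, its bottom $n-r$ eigenvalues are $0$, so the bottom $n-r$ eigenvalues of $f(\bm{B})$ all equal $f(0)$ and $f(\bm{B}) - f(\bm{B})_{(r)} = f(0)(\bm{I}-\bm{P}_{\bm{B}})$. Combining with the previous bound yields
\begin{equation*}
f(\bm{A}) - f(\bm{B})_{(r)} \preceq (f(t)-f(0))\bm{I} + f(0)(\bm{I}-\bm{P}_{\bm{B}}) = f(t)\bm{I} - f(0)\bm{P}_{\bm{B}} \preceq f(t)\bm{I},
\end{equation*}
so $\lambda_1(f(\bm{A}) - f(\bm{B})_{(r)}) \leq f(t)$. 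Conversely, $f(\bm{B})_{(r)} \preceq f(\bm{B})$ gives $f(\bm{B})_{(r)} - f(\bm{A}) \preceq f(\bm{B}) - f(\bm{A}) \preceq (f(t)-f(0))\bm{I} \preceq f(t)\bm{I}$, so $\lambda_1(f(\bm{B})_{(r)} - f(\bm{A})) \leq f(t)$. The operator norm is the larger of these two, which completes the core inequality.

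The main obstacle is the case $f(0) > 0$: truncating $f(\bm{B})$ to $f(\bm{B})_{(r)}$ discards a rank $(n-r)$ piece of magnitude $f(0)$, which would naively contribute an extra $f(0)$ to the error. The cancellation $(f(t)-f(0)) + f(0) = f(t)$ in the combining step is what prevents this, and it crucially uses both the exact identity $f(\bm{B}) - f(\bm{B})_{(r)} = f(0)(\bm{I}-\bm{P}_{\bm{B}})$ (which requires $r = \rank(\bm{B})$) and the commutativity of $\bm{B}$ with $t\bm{I}$ (which is what allows the scalar concavity property of \Cref{lemma:concave_properties}~(\textit{iv}) to be applied operator-theoretically, without needing an operator-theoretic analogue for non-commuting perturbations).
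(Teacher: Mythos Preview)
Your proof is correct and follows the same overall architecture as the paper: reduce to the core estimate $\|f(\bm{A})-f(\bm{B})_{(r)}\|_2 \leq f(\|\bm{A}-\bm{B}\|_2)$ and then convert via concavity. The difference is in how that core estimate is obtained. The paper invokes Bhatia's inequality \(\|f(\bm{A})-f(\bm{B})\|_2\leq f(\|\bm{A}-\bm{B}\|_2)\) (valid when $f(0)=0$) as a black box, and then handles $f(0)>0$ by writing $f=g+f(0)$ and applying the triangle inequality to the splitting $f(\bm{A})-f(\bm{B})_{(r)}=\bigl(g(\bm{A})-g(\bm{B})\bigr)+f(0)(\bm{I}-\bm{P}_{\bm{B}})$. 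You instead derive the two-sided Loewner bound $\pm\bigl(f(\bm{A})-f(\bm{B})\bigr)\preceq (f(t)-f(0))\bm{I}$ from scratch via operator monotonicity and scalar concavity, and then absorb the $f(0)(\bm{I}-\bm{P}_{\bm{B}})$ term at the Loewner level, getting the exact cancellation $(f(t)-f(0))+f(0)=f(t)$ rather than via the triangle inequality. Your route is more self-contained (no external citation needed) and treats the $f(0)>0$ case uniformly; the paper's route is shorter because it outsources the heavy lifting to a known theorem. Both identify $f(\bm{B})_{(r)}=f(\bm{B})-f(0)(\bm{I}-\bm{P}_{\bm{B}})$ as the key structural fact making $r=\rank(\bm{B})$ the right truncation level.
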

\begin{proof}
First assume that $f(0) = 0$. As a consequence of this assumption, we have $f(\bm{B})_{(r)} = f(\bm{B})$. We leverage \cite[Theorem X.1.1]{bhatia} about operator monotone functions, which says that $\|f(\bm{A})-f(\bm{B})\|_2 \leq f(\|\bm{A}-\bm{B}\|_2)$.
Then, recalling from \Cref{lemma:opmon_properties} that \(f\) is increasing and concave, we have that
\begin{align*}
    \|f(\bm{A})-f(\bm{B})\|_2
    &\leq f(\|\bm{A}-\bm{B}\|_2) \tag{\cite[Theorem X.1.1]{bhatia}} \\
    &\leq f((1+\varepsilon)\lambda_{k+1}) \\
    &\leq (1+\varepsilon) f(\lambda_{k+1}) \tag{\Cref{lemma:concave_properties} (\textit{\ref*{enum:t>1}})} \\
    &= (1+\varepsilon)\|f(\bm{A}) - f(\bm{A})_{(k)}\|_2,
\end{align*}
which yields the desired inequality for the case when $f(0) = 0$. 
We now consider the general case when $f(0) \geq 0$. Write $f(x) = g(x) + f(0)$, where $g(x)$ is a continuous operator monotone function satisfying $g(0) = 0$. 
Let $\bm{P}_{\bm{B}}$ be the orthogonal projector onto $\range(\bm{B})$. We have that $f(\bm{B})_{(r)} = g(\bm{B}) + f(0)\bm{P}_{\bm{B}}$, so
\begin{equation}\label{eq:sum}
    f(\bm{A}) - f(\bm{B})_{(r)} = g(\bm{A}) - g(\bm{B}) + f(0)(\bm{I}-\bm{P}_{\bm{B}})\rev{.}
\end{equation}
\rev{By the triangle inequality and the result for operator monotone functions satisfying $g(0) = 0$ we have
\begin{align*}
    \|f(\bm{A}) - f(\bm{B})_{(r)}\|_2 & \leq \|g(\bm{A}) - g(\bm{B})\|_2 + f(0)\|(\bm{I}-\bm{P}_{\bm{B}})\|_2 \\
    & \leq (1+\varepsilon) g(\lambda_{k+1}) + f(0) \\
    & \leq (1+\varepsilon)(g(\lambda_{k+1}) + f(0)) \\
    & = (1+\varepsilon) f(\lambda_{k+1}) \\
    & = (1+\varepsilon)\|f(\bm{A}) - f(\bm{A})_{(k)}\|_2,
\end{align*}
as required.}
\end{proof}


\subsection{\rev{Nyström to funNyström:} Eigenvalue guarantees}\label{section:eigenvalue_blackbox}
In this section we establish guarantees for eigenvalue estimation. In particular, \rev{using simple properties of concave functions} we show that if the eigenvalues of a SPSD matrix $\widehat{\bm{A}}$ are good approximations to the eigenvalues of $\bm{A}$, then the eigenvalues of $f(\widehat{\bm{A}})$ are even better approximations to the eigenvalues of $f(\bm{A})$. This result could be combined with results that prove eigenvalue approximation guarantees for algorithms including subspace iteration and block Krylov subspace methods \cite{MM15}.
\begin{theorem}\label{theorem:eigenvalue_black_box}
    Suppose that, for $\varepsilon \in [0,1]$, we have estimates $\widehat{\lambda}_1 \geq \widehat{\lambda}_2 \geq \ldots \geq \widehat{\lambda}_k \geq 0$ of the $k$ largest eigenvalues of $\bm{A}$ satisfying
    \begin{equation*}
        0\leq\lambda_i-\widehat{\lambda}_i \leq \varepsilon \lambda_{k+1} \quad \text{for }i = 1,\ldots,k.
    \end{equation*}
     Then for any non-decreasing concave function $f:[0,\infty) \to [0,\infty)$,
    \begin{equation*}
        0\leq f(\lambda_i)-f(\widehat{\lambda}_i) \leq \varepsilon f(\lambda_{k+1}) \quad \text{for }i = 1,\ldots,k.
    \end{equation*}
\end{theorem}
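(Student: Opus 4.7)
The plan is to separate the two inequalities. The lower bound $f(\lambda_i) - f(\widehat{\lambda}_i) \geq 0$ is immediate: the hypothesis gives $\widehat{\lambda}_i \leq \lambda_i$, and $f$ is non-decreasing. All the work is in the upper bound.

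For the upper bound, I would first observe that since $i \leq k$ we have $\lambda_i \geq \lambda_{k+1}$, and since $\varepsilon \in [0,1]$ this gives $\lambda_i - \varepsilon \lambda_{k+1} \geq (1-\varepsilon)\lambda_{k+1} \geq 0$. Combined with the hypothesis $\widehat{\lambda}_i \geq \lambda_i - \varepsilon \lambda_{k+1}$, monotonicity of $f$ yields $f(\widehat{\lambda}_i) \geq f(\lambda_i - \varepsilon \lambda_{k+1})$, so
\[
f(\lambda_i) - f(\widehat{\lambda}_i) \leq f(\lambda_i) - f(\lambda_i - \varepsilon \lambda_{k+1}).
\]

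Next I would invoke \Cref{lemma:concave_properties}(\textit{\ref*{enum:decreasing_difference}}), which states that for fixed $t \geq 0$, $x \mapsto f(x) - f(x-t)$ is decreasing. Applying this with $t = \varepsilon \lambda_{k+1}$ and using $\lambda_i \geq \lambda_{k+1}$ gives
\[
f(\lambda_i) - f(\lambda_i - \varepsilon \lambda_{k+1}) \leq f(\lambda_{k+1}) - f((1-\varepsilon)\lambda_{k+1}).
\]
Finally, \Cref{lemma:concave_properties}(\textit{\ref*{enum:t<1}}) applied with $t = 1-\varepsilon \in [0,1]$ gives $f((1-\varepsilon)\lambda_{k+1}) \geq (1-\varepsilon) f(\lambda_{k+1})$, so the right-hand side is bounded above by $\varepsilon f(\lambda_{k+1})$. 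Chaining these three estimates delivers the claim.

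The proof has no real obstacle beyond identifying the correct combination of concavity facts already packaged in \Cref{lemma:concave_properties}. The only subtle step is recognizing that \Cref{lemma:concave_properties}(\textit{\ref*{enum:decreasing_difference}}) is the right tool to transfer the additive perturbation from the ``unknown'' location $\lambda_i$ down to the anchor point $\lambda_{k+1}$, where the multiplicative bound from (\textit{\ref*{enum:t<1}}) becomes available. The assumption $\varepsilon \leq 1$ is used exactly to ensure that $(1-\varepsilon)\lambda_{k+1}$ lies in the domain of $f$, so no boundary case needs separate treatment.
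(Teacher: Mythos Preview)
Your proof is correct and follows essentially the same route as the paper: both use monotonicity for the lower bound and for the step $f(\widehat{\lambda}_i)\geq f(\lambda_i-\varepsilon\lambda_{k+1})$, then \Cref{lemma:concave_properties}(\textit{\ref*{enum:decreasing_difference}}) to pass from $\lambda_i$ to $\lambda_{k+1}$, and finally \Cref{lemma:concave_properties}(\textit{\ref*{enum:t<1}}) to conclude. Your explicit check that $\lambda_i-\varepsilon\lambda_{k+1}\geq 0$ is a nice addition that the paper leaves implicit.
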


\begin{proof}
Note that by \Cref{lemma:concave_properties}(\textit{\ref*{enum:decreasing_difference}}) the function
\begin{equation*}
    g(t) = f(t) - f(t-\varepsilon \lambda_{k+1})
\end{equation*}
is decreasing. Hence, for  $i = 1,\ldots,k$ we have
\begin{align*}
    0 &\leq f(\lambda_i) - f(\widehat{\lambda}_i) \tag{\(f\) is non-decreasing} \\
    &\leq f(\lambda_i) - f(\lambda_i-\varepsilon \lambda_{k+1}) \tag{\(f\) is non-decreasing} \\
    &\leq f(\lambda_{k+1}) - f((1-\varepsilon)\lambda_{k+1}) \tag{\(g(\lambda_{i}) \leq g(\lambda_{k+1})\) since \(\lambda_i \geq \lambda_{k+1}\)} \\
    &\leq \varepsilon f(\lambda_{k+1}) \tag{\Cref{lemma:concave_properties}~(\textit{\ref*{enum:t<1}})},
\end{align*}
as required.
\end{proof}
The assumption in \Cref{theorem:eigenvalue_black_box} can be weakened to the case when we have small relative errors
\begin{equation}
    0 \leq \lambda_i - \widehat{\lambda}_i \leq \varepsilon \lambda_i.\label{eq:eig_rel_err}
\end{equation}
By the monotonicity of $f$ and \Cref{lemma:concave_properties}~(\textit{\ref*{enum:t<1}}), we have that \eqref{eq:eig_rel_err} implies
\begin{equation*}
    f(\lambda_i) - f(\widehat{\lambda}_i) \leq f(\lambda_i) - f((1-\varepsilon)\lambda_i) \leq \varepsilon f(\lambda_i).
\end{equation*}
Hence, we also have the following result. 
\begin{theorem}
Suppose that, for $\varepsilon \in [0,1]$, we have estimates $\widehat{\lambda}_1 \geq \widehat{\lambda}_2 \geq \ldots \geq \widehat{\lambda}_k \geq 0$ of the $k$ largest eigenvalues of $\bm{A}$ satisfying
    \begin{equation*}
        0\leq\lambda_i-\widehat{\lambda}_i \leq \varepsilon \lambda_{i} \quad \text{for }i = 1,\ldots,k.
    \end{equation*}
    Then for any non-decreasing concave function $f:[0,\infty) \mapsto [0,\infty)$,
    \begin{equation*}
        0\leq f(\lambda_i)-f(\widehat{\lambda}_i) \leq \varepsilon f(\lambda_{i}) \quad \text{for }i = 1,\ldots,k.
    \end{equation*}
\end{theorem}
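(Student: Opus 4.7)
The plan is to essentially mirror the argument the authors sketched immediately before stating the theorem, making precise the short chain of inequalities that follows from monotonicity of $f$ together with \Cref{lemma:concave_properties}~(\textit{\ref*{enum:t<1}}). The structure will be: first establish the lower bound $f(\lambda_i) - f(\widehat{\lambda}_i) \geq 0$ directly from the monotonicity of $f$ and the hypothesis $\widehat{\lambda}_i \leq \lambda_i$, then attack the upper bound.

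For the upper bound, the key step is to rewrite the hypothesis $\lambda_i - \widehat{\lambda}_i \leq \varepsilon \lambda_i$ as $\widehat{\lambda}_i \geq (1-\varepsilon)\lambda_i$. Since $f$ is non-decreasing, this yields $f(\widehat{\lambda}_i) \geq f((1-\varepsilon)\lambda_i)$. Now set $t = 1-\varepsilon \in [0,1]$ and $x = \lambda_i \geq 0$, and invoke \Cref{lemma:concave_properties}~(\textit{\ref*{enum:t<1}}), which gives $f((1-\varepsilon)\lambda_i) \geq (1-\varepsilon) f(\lambda_i)$. Combining, $f(\widehat{\lambda}_i) \geq (1-\varepsilon)f(\lambda_i)$, which rearranges to $f(\lambda_i) - f(\widehat{\lambda}_i) \leq \varepsilon f(\lambda_i)$, as required. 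Each inequality will be applied coordinatewise for $i=1,\ldots,k$, so the final statement follows immediately.

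There is essentially no obstacle here: unlike the main theorems, this one does not involve the cross-interaction between the top $k$ and tail eigenvalues of $\bm{A}$ (the hypothesis bounds $\lambda_i - \widehat{\lambda}_i$ in terms of $\lambda_i$ itself, not $\lambda_{k+1}$), so no summation or ratio argument is needed. The only item of care is checking that \Cref{lemma:concave_properties}~(\textit{\ref*{enum:t<1}}) indeed applies on $[0,\infty)$ with $f$ merely assumed non-decreasing and concave — but this is exactly the hypothesis of that lemma, so the application is clean. I would present the proof as a single short display using the same ``tag'' style the authors use in \Cref{theorem:eigenvalue_black_box}:
\begin{align*}
0 \;\leq\; f(\lambda_i) - f(\widehat{\lambda}_i)
  &\leq f(\lambda_i) - f((1-\varepsilon)\lambda_i) \\
  &\leq \varepsilon f(\lambda_i),
\end{align*}
with the first inequality justified by $f$ being non-decreasing together with $\widehat{\lambda}_i \geq (1-\varepsilon)\lambda_i$, and the second by \Cref{lemma:concave_properties}~(\textit{\ref*{enum:t<1}}).
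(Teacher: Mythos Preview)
Your proof is correct and is exactly the argument the paper uses: apply monotonicity of $f$ together with $\widehat{\lambda}_i \geq (1-\varepsilon)\lambda_i$ to get $f(\lambda_i) - f(\widehat{\lambda}_i) \leq f(\lambda_i) - f((1-\varepsilon)\lambda_i)$, then invoke \Cref{lemma:concave_properties}~(\textit{\ref*{enum:t<1}}) to bound this by $\varepsilon f(\lambda_i)$.
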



\section{Good projections imply good Nyström approximations}\label{section:gray_box}
In this section we show that if $\bm{Q}$ is an orthonormal basis so that $(\bm{Q}\bm{Q}^T \bm{A})_{(k)}$ is a good rank $k$ approximation to $\bm{A}$, then $\widehat{\bm{A}}_{(k)}$ is a \emph{better} rank $k$ approximation to $\bm{A}$, where $\widehat{\bm{A}} = \bm{A} \bm{Q} (\bm{Q}^T \bm{A} \bm{Q})^{\dagger} \bm{Q}^T \bm{A}$ is the Nyström approximation to $\bm{A}$. 
Existing low-rank approximation literature commonly provides guarantees for the error $\|\bm{A} - (\bm{Q}\bm{Q}^T \bm{A})_{(k)}\|$, where $\bm{Q}$ is the output of some algorithm, see e.g. \cite{aineshlowrank,bakshi2023krylov,gu_subspace,iyer2018iterative,MeyerMuscoMusco:2024,MM15}.\footnote{
    We recall that $(\bm{Q} \bm{Q}^T \bm{A})_{(k)} = \bm{Q}(\bm{Q}^T \bm{A})_{(k)}$ and $(\bm{Q} \bm{Q}^T \bm{A}\bm{Q} \bm{Q}^T)_{(k)} = \bm{Q}(\bm{Q}^T \bm{A}\bm{Q})_{(k)}\bm{Q}^T$.
    Both $\bm{Q}(\bm{Q}^T \bm{A})_{(k)}$ and $\bm{Q}(\bm{Q}^T \bm{A}\bm{Q})_{(k)} \bm{Q}^T$ are preferable for computational purposes since we only have to compute the best rank $k$ approximation of a smaller matrix.
    However, in the following sections we use $(\bm{Q} \bm{Q}^T \bm{A})_{(k)}$ and $(\bm{Q} \bm{Q}^T \bm{A}\bm{Q} \bm{Q}^T)_{(k)}$, since it simplifies our notation.
} 
Hence, this result allows us to transform many known low-rank approximation guarantees into low-rank approximation guarantees for the rank $k$ truncated Nyström approximation $\widehat{\bm{A}}_{(k)}$. Further, by the results in Section~\ref{section:black_box} we therefore extend these guarantees to the funNyström approximation.

We point out that whenever $\bm{Q}$ has exactly $k$ columns, many of the results in this section would follow from \cite[Lemma 5.2]{tropp2023randomized}, which shows that $\|\bm{A} - \widehat{\bm{A}}\| \leq \|\bm{A} - \bm{Q} \bm{Q}^T \bm{A}\|$ for any unitarily invariant norm $\|\cdot\|$. However, often $\bm{Q}$ has more than $k$ columns, e.g. when $\bm{Q}$ is an orthonormal basis for a Krylov subspace, and we want to establish guarantees when we truncate $\widehat{\bm{A}}$ back to rank $k$.  Truncation is desirable when the low-rank approximation is needed for  downstream applications like data visualization or $k$-means clustering \cite{pourkamali2018randomized}.

We show that $\|\bm{A} - \widehat{\bm{A}}_{(k)}\| \leq \|\bm{A} - (\bm{Q} \bm{Q}^T \bm{A})_{(k)}\|$ for the nuclear and Frobenius norms. Perhaps surprisingly, the inequality is false in the operator norm and we provide a counterexample. Lastly, we also provide an analogous guarantee for estimating the eigenvalues of $\bm{A}$. Before doing so, we recall a standard fact about Nyström approximation used
throughout this section.

\begin{lemma}[\rev{{\cite[Lemma 1]{gittensmahoney}}}]\label{lemma:folklore}
    For any $\bm{Q}\in \mathbb{R}^{n \times \ell}$, 
    the Nyström approximation satisfies $\widehat{\bm{A}} = \bm{A} \bm{Q} (\bm{Q}^T \bm{A} \bm{Q})^{\dagger} \bm{Q}^T \bm{A} = \bm{A}^{1/2} \bm{P}_{\bm{A}^{1/2} \bm{Q}} \bm{A}^{1/2}$. 
\end{lemma}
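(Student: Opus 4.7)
The plan is to recognize the Nyström formula as the orthogonal projector onto $\range(\bm{A}^{1/2}\bm{Q})$ sandwiched between two copies of $\bm{A}^{1/2}$. Concretely, I would introduce $\bm{M} := \bm{A}^{1/2} \bm{Q}$ and rewrite each factor in the Nyström formula in terms of $\bm{M}$, using that $\bm{A}$ is SPSD so $\bm{A}^{1/2}$ is well-defined and symmetric.

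First, substitute $\bm{A} = \bm{A}^{1/2} \bm{A}^{1/2}$ in every occurrence to obtain
\begin{equation*}
    \bm{A}\bm{Q} = \bm{A}^{1/2}\bm{M}, \qquad \bm{Q}^T \bm{A} \bm{Q} = \bm{M}^T \bm{M}, \qquad \bm{Q}^T\bm{A} = \bm{M}^T \bm{A}^{1/2}.
\end{equation*}
Plugging these in gives
\begin{equation*}
    \bm{A}\bm{Q}(\bm{Q}^T\bm{A}\bm{Q})^{\dagger}\bm{Q}^T\bm{A} = \bm{A}^{1/2}\, \bm{M}(\bm{M}^T\bm{M})^{\dagger}\bm{M}^T \, \bm{A}^{1/2}.
\end{equation*}

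The last step is the standard linear-algebra identity $\bm{M}(\bm{M}^T\bm{M})^{\dagger}\bm{M}^T = \bm{P}_{\bm{M}}$, the orthogonal projector onto $\range(\bm{M})$. I would quickly verify this by taking a thin SVD $\bm{M} = \bm{U}\bm{\Sigma}\bm{V}^T$ where $\bm{\Sigma}$ is square and invertible on the nonzero block: then $\bm{M}^T\bm{M} = \bm{V}\bm{\Sigma}^2\bm{V}^T$, so $(\bm{M}^T\bm{M})^{\dagger} = \bm{V}\bm{\Sigma}^{-2}\bm{V}^T$, and a direct computation yields $\bm{M}(\bm{M}^T\bm{M})^{\dagger}\bm{M}^T = \bm{U}\bm{U}^T = \bm{P}_{\bm{M}}$. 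Combining this with the previous display gives $\widehat{\bm{A}} = \bm{A}^{1/2}\bm{P}_{\bm{A}^{1/2}\bm{Q}}\bm{A}^{1/2}$, as claimed. There is no genuine obstacle here — the result is a direct algebraic manipulation, with the only subtle point being the pseudoinverse identity above, which is why rank deficiency in $\bm{Q}^T\bm{A}\bm{Q}$ (handled by the Moore–Penrose pseudoinverse) does not break the argument.
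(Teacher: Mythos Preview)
Your proof is correct and is the standard argument. Note that the paper does not actually give its own proof of this lemma; it simply cites \cite[Lemma 1]{gittensmahoney} and uses the result as a black box, so there is nothing to compare against beyond observing that your substitution $\bm{M}=\bm{A}^{1/2}\bm{Q}$ together with the pseudoinverse identity $\bm{M}(\bm{M}^T\bm{M})^{\dagger}\bm{M}^T=\bm{P}_{\bm{M}}$ is exactly the expected derivation.
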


\subsection{\rev{Projections to Nyström:} Frobenius norm guarantees}\label{section:frob_gray_box}

\rev{We first prove the following result on low-rank approximation in the Frobenius norm.}

\begin{theorem}
\label{theorem:frobenius_proj_implies_funnystrom}
Let $\bm{A} \succeq \bm{0}$ and let $\bm{Q}$ be an orthonormal basis so that, for $\epsilon \geq 0$,
    \begin{equation}\label{eq:near_optimal_rsvd}
        \|\bm{A} - (\bm{Q} \bm{Q}^T \bm{A})_{(k)}\|_F^2 \leq (1+\varepsilon) \|\bm{A}-\bm{A}_{(k)}\|_F^2.
    \end{equation}
    Then if $\widehat{\bm{A}} = \bm{A} \bm{Q} (\bm{Q}^T \bm{A} \bm{Q})^{\dagger} \bm{Q}^T \bm{A}$ we have
\begin{equation*}
    \|\bm{A}\|_F^2 - \|\widehat{\bm{A}}_{(k)}\|_F^2 \leq (1+\varepsilon) \|\bm{A}-\bm{A}_{(k)}\|_F^2. 
\end{equation*}
\end{theorem}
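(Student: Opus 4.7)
The plan is to prove the theorem by two reductions. First, I will show that the hypothesis \eqref{eq:near_optimal_rsvd} is equivalent, via a Pythagorean identity, to the cleaner bound $\|\bm{A}\|_F^2 - \|(\bm{Q}\bm{Q}^T\bm{A})_{(k)}\|_F^2 \leq (1+\varepsilon)\|\bm{A}-\bm{A}_{(k)}\|_F^2$. Second, I will establish the norm inequality $\|\widehat{\bm{A}}_{(k)}\|_F^2 \geq \|(\bm{Q}\bm{Q}^T\bm{A})_{(k)}\|_F^2$, which combined with the first reduction immediately yields the theorem.

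For the first reduction, note that every column of $(\bm{I}-\bm{Q}\bm{Q}^T)\bm{A}$ lies in $\range(\bm{Q})^\perp$, whereas every column of $\bm{Q}\bm{Q}^T\bm{A}$, and hence of its best rank-$k$ approximation $(\bm{Q}\bm{Q}^T\bm{A})_{(k)}$, lies in $\range(\bm{Q})$. Two applications of the Pythagorean identity for the Frobenius norm, combined with the standard SVD fact $\|\bm{M}\|_F^2 = \|\bm{M}_{(k)}\|_F^2 + \|\bm{M}-\bm{M}_{(k)}\|_F^2$, give
\begin{equation*}
    \|\bm{A}\|_F^2 - \|(\bm{Q}\bm{Q}^T\bm{A})_{(k)}\|_F^2 = \|\bm{A} - (\bm{Q}\bm{Q}^T\bm{A})_{(k)}\|_F^2 \leq (1+\varepsilon)\|\bm{A}-\bm{A}_{(k)}\|_F^2,
\end{equation*}
where the inequality is by assumption \eqref{eq:near_optimal_rsvd}.

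The second reduction is the main obstacle. I would prove the stronger operator inequality $\widehat{\bm{A}}^2 \succeq \bm{A}\bm{Q}\bm{Q}^T\bm{A}$, from which Weyl's monotonicity gives $\sum_{i=1}^{k}\lambda_i(\widehat{\bm{A}}^2) \geq \sum_{i=1}^k \lambda_i(\bm{A}\bm{Q}\bm{Q}^T\bm{A})$. Since both matrices are PSD, the left-hand side equals $\sum_{i=1}^k \sigma_i(\widehat{\bm{A}})^2 = \|\widehat{\bm{A}}_{(k)}\|_F^2$ and the right-hand side equals $\sum_{i=1}^k \sigma_i(\bm{Q}^T\bm{A})^2 = \|(\bm{Q}\bm{Q}^T\bm{A})_{(k)}\|_F^2$, yielding the desired inequality. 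To establish the operator inequality, the key observation is the identity $\widehat{\bm{A}}\bm{Q} = \bm{A}\bm{Q}$, which holds because $\ker(\bm{Q}^T\bm{A}\bm{Q}) = \ker(\bm{A}\bm{Q})$, so that $\bm{A}\bm{Q}(\bm{Q}^T\bm{A}\bm{Q})^\dagger(\bm{Q}^T\bm{A}\bm{Q}) = \bm{A}\bm{Q}$ regardless of whether $\bm{Q}^T\bm{A}\bm{Q}$ is invertible. Together with its transpose $\bm{Q}^T\widehat{\bm{A}} = \bm{Q}^T\bm{A}$, this identity gives $\widehat{\bm{A}}\bm{Q}\bm{Q}^T\widehat{\bm{A}} = \bm{A}\bm{Q}\bm{Q}^T\bm{A}$, and hence
\begin{equation*}
    \widehat{\bm{A}}^2 - \bm{A}\bm{Q}\bm{Q}^T\bm{A} = \widehat{\bm{A}}(\bm{I}-\bm{Q}\bm{Q}^T)\widehat{\bm{A}} = \bigl((\bm{I}-\bm{Q}\bm{Q}^T)\widehat{\bm{A}}\bigr)^T(\bm{I}-\bm{Q}\bm{Q}^T)\widehat{\bm{A}} \succeq \bm{0},
\end{equation*}
as desired. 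Once one spots the identity $\widehat{\bm{A}}\bm{Q} = \bm{A}\bm{Q}$, the rest is routine; alternatively, the same operator inequality can be obtained from the factored form $\widehat{\bm{A}} = \bm{A}^{1/2}\bm{P}_{\bm{A}^{1/2}\bm{Q}}\bm{A}^{1/2}$ of \Cref{lemma:folklore}, using that $\bm{Q}\bm{Q}^T \preceq \bm{I}$.
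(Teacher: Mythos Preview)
Your proof is correct and takes a genuinely different route from the paper's. The paper proves the result via \Cref{lemma:inequalities}, which establishes the chain
\[
\|\bm{A}\|_F^2 - \|\widehat{\bm{A}}_{(k)}\|_F^2 \;=\; \|\bm{A} - (\bm{P}_{\bm{A}^{1/2}\bm{Q}}\bm{A}\bm{P}_{\bm{A}^{1/2}\bm{Q}})_{(k)}\|_F^2 \;\leq\; \|\bm{A} - (\bm{P}_{\bm{Q}}\bm{A})_{(k)}\|_F^2,
\]
using the factored form $\widehat{\bm{A}} = \bm{A}^{1/2}\bm{P}_{\bm{A}^{1/2}\bm{Q}}\bm{A}^{1/2}$ from \Cref{lemma:folklore} together with the constrained best-rank-$k$ characterization of \Cref{lemma:constrained_best_rank_k_approximation} and a clever choice of rank-$k$ projector $\bar{\bm{P}}$. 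You instead reduce the claim to the singular-value comparison $\|\widehat{\bm{A}}_{(k)}\|_F^2 \geq \|(\bm{Q}\bm{Q}^T\bm{A})_{(k)}\|_F^2$, which you obtain from the operator inequality $\widehat{\bm{A}}^2 \succeq \bm{A}\bm{Q}\bm{Q}^T\bm{A}$; the latter follows in one line from the identity $\widehat{\bm{A}}\bm{Q} = \bm{A}\bm{Q}$. Your argument is more elementary and self-contained: it avoids \Cref{lemma:constrained_best_rank_k_approximation} entirely and in fact gives the per-singular-value bound $\sigma_i(\widehat{\bm{A}}) \geq \sigma_i(\bm{Q}^T\bm{A})$ for every $i$ (the paper uses the same identity $\bm{Q}^T\widehat{\bm{A}} = \bm{Q}^T\bm{A}$ later, for the eigenvalue guarantee, but not here). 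What the paper's approach buys in return is the full sandwich of inequalities in \Cref{lemma:inequalities}, which has independent consequences (e.g., the monotonicity of subspace iteration discussed in \Cref{remark1}).
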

\Cref{theorem:frobenius_proj_implies_funnystrom} establishes that the condition needed for \Cref{theorem:frobenius_black_box} can be achieved with many low-rank approximation algorithms, including e.g. block Krylov subspace methods, sketching methods, and sampling methods \cite{aineshlowrank,bakshi2023krylov,CohenMuscoMusco:2017,MeyerMuscoMusco:2024,MM15,Sarlos:2006,woodruff_sketching}.
Further, by \Cref{lemma:schatten_difference} we know that $\|\bm{A} - \widehat{\bm{A}}_{(k)}\|_F^2 \leq \|\bm{A}\|_F^2 - \|\widehat{\bm{A}}_{(k)}\|_F^2$, which shows that if \eqref{eq:near_optimal_rsvd} is satisfied then $\|\bm{A} - \widehat{\bm{A}}_{(k)}\|_F^2 \leq (1+\varepsilon)\|\bm{A} - \bm{A}_{(k)}\|_F^2$. That is, $\widehat{\bm{A}}_{(k)}$ is a near-optimal rank $k$ approximation. 

We begin by recalling a fact about the best rank $k$ approximation to $\bm{A}$ constrained to $\range(\bm{Q})$. 
\begin{lemma}\label{lemma:constrained_best_rank_k_approximation}
    Let $\bm{Q}$ be an orthonormal basis. Then,
    \begin{equation}\label{eq:best_constrained}
        \|\bm{B} - \bm{Q} (\bm{Q}^T \bm{B})_{(k)}\|_F = \min\limits_{\bm{C} : \rank(\bm{C}) \leq k} \|\bm{B} - \bm{Q} \bm{C}\|_F^2,
    \end{equation}
    and
    \begin{equation}\label{eq:best_symmetric_constrained}
        \|\bm{B} - \bm{Q} (\bm{Q}^T \bm{B} \bm{Q})_{(k)} \bm{Q}^T\|_F = \min\limits_{\bm{C} : \rank(\bm{C}) \leq k} \|\bm{B} - \bm{Q} \bm{C} \bm{Q}^T\|_F^2.
    \end{equation}
\end{lemma}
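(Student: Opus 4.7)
The plan is to reduce both equalities to the classical Eckart--Young theorem by a Pythagorean decomposition with respect to the orthogonal projector $\bm{P}_{\bm{Q}} = \bm{Q}\bm{Q}^T$. I would first observe that the displayed right-hand sides are meant to carry squared Frobenius norms (matching the left sides once squared); since $\bm{Q}\bm{C}$ has rank at most $k$, minimising the square is the same as minimising the norm, so the formulation poses no issue.

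For \eqref{eq:best_constrained}, I would write, for any matrix $\bm{C}$,
\[
\bm{B} - \bm{Q}\bm{C} = \bm{P}_{\bm{Q}}^{\perp}\bm{B} + \bm{Q}\bigl(\bm{Q}^T \bm{B} - \bm{C}\bigr),
\]
where $\bm{P}_{\bm{Q}}^{\perp} = \bm{I} - \bm{Q}\bm{Q}^T$. The two summands have ranges in the orthogonal subspaces $\range(\bm{Q})^{\perp}$ and $\range(\bm{Q})$, respectively, so the Pythagorean theorem for the Frobenius inner product gives
\[
\|\bm{B} - \bm{Q}\bm{C}\|_F^2 = \|\bm{P}_{\bm{Q}}^{\perp} \bm{B}\|_F^2 + \|\bm{Q}^T \bm{B} - \bm{C}\|_F^2,
\]
using $\|\bm{Q} \bm{X}\|_F = \|\bm{X}\|_F$ for orthonormal $\bm{Q}$. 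The first term does not depend on $\bm{C}$, and the second is minimised over rank-$k$ matrices by the Eckart--Young--Mirsky theorem at $\bm{C} = (\bm{Q}^T \bm{B})_{(k)}$, which yields \eqref{eq:best_constrained}.

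For \eqref{eq:best_symmetric_constrained}, I would use the finer decomposition
\[
\bm{B} - \bm{Q}\bm{C}\bm{Q}^T = \bigl(\bm{P}_{\bm{Q}} \bm{B} \bm{P}_{\bm{Q}} - \bm{Q}\bm{C}\bm{Q}^T\bigr) + \bm{P}_{\bm{Q}} \bm{B} \bm{P}_{\bm{Q}}^{\perp} + \bm{P}_{\bm{Q}}^{\perp} \bm{B} \bm{P}_{\bm{Q}} + \bm{P}_{\bm{Q}}^{\perp} \bm{B} \bm{P}_{\bm{Q}}^{\perp}.
\]
The four summands have ranges and co-ranges supported on orthogonal pairs of subspaces, so they are pairwise Frobenius-orthogonal (checked by $\tr(\bm{P}_{\bm{Q}} \bm{X} \bm{P}_{\bm{Q}}^{\perp} \cdot \bm{P}_{\bm{Q}} \bm{Y} \bm{P}_{\bm{Q}} \cdots) = 0$ in each cross-term). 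Pythagoras then gives
\[
\|\bm{B} - \bm{Q}\bm{C}\bm{Q}^T\|_F^2 = \|\bm{Q}^T \bm{B} \bm{Q} - \bm{C}\|_F^2 + \text{terms independent of } \bm{C},
\]
where I used $\bm{P}_{\bm{Q}} \bm{B} \bm{P}_{\bm{Q}} = \bm{Q}(\bm{Q}^T \bm{B} \bm{Q})\bm{Q}^T$ and again invariance of $\|\cdot\|_F$ under the isometries $\bm{X} \mapsto \bm{Q}\bm{X}$ and $\bm{X} \mapsto \bm{X}\bm{Q}^T$. Applying Eckart--Young--Mirsky to the one $\bm{C}$-dependent piece identifies the minimiser as $\bm{C} = (\bm{Q}^T \bm{B} \bm{Q})_{(k)}$.

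There is no real obstacle here beyond bookkeeping; the only care needed is verifying that the four blocks in the second decomposition really are Frobenius-orthogonal, and that replacing $\min \|\cdot\|_F^2$ by $\min \|\cdot\|_F$ is immaterial. Both are routine computations using $\tr(\bm{X}^T \bm{Y})$ and the fact that $\bm{P}_{\bm{Q}} \bm{P}_{\bm{Q}}^{\perp} = \bm{0}$.
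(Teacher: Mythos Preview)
Your proposal is correct and follows essentially the same approach as the paper: a Pythagorean decomposition with respect to the projector $\bm{Q}\bm{Q}^T$ followed by Eckart--Young. The only cosmetic difference is that for \eqref{eq:best_symmetric_constrained} the paper groups your three $\bm{C}$-independent blocks into the single term $\bm{B}-\bm{Q}\bm{Q}^T\bm{B}\bm{Q}\bm{Q}^T$ and verifies its Frobenius orthogonality to anything of the form $\bm{Q}\bm{E}\bm{Q}^T$ directly, whereas you split into four mutually orthogonal blocks; for \eqref{eq:best_constrained} the paper simply cites a reference, while you spell out the same standard argument.
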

\begin{proof}
\eqref{eq:best_constrained} was proven in \cite[Theorem 3.5]{gu_subspace} and \eqref{eq:best_symmetric_constrained} is proven in a similar fashion: \rev{Notice that for any matrix $\bm{E}$ we have by the cyclic property of the trace
\begin{equation}\label{eq:E}
    \langle \bm{B} - \bm{Q}\bm{Q}^T \bm{B}\bm{Q}\bm{Q}^T, \bm{Q}\bm{E}\bm{Q}^T \rangle = \tr(\bm{B} \bm{Q}\bm{E}^T \bm{Q}^T) - \tr(\bm{Q} \bm{Q}^T \bm{B} \bm{Q}\bm{E}^T \bm{Q}^T)=0.
\end{equation}
Let $\bm{C}$ be any matrix so that $\rank(\bm{C}) \leq k$. Then, by setting $\bm{E} = \bm{Q}^T\bm{B} \bm{Q} - \bm{C}$ in \eqref{eq:E} we obtain}
\begin{align*}
    &\langle \bm{B} - \bm{Q}\bm{Q}^T \bm{B}\bm{Q}\bm{Q}^T, \bm{Q}\bm{Q}^T \bm{B}\bm{Q}\bm{Q}^T - \bm{Q}\bm{C}\bm{Q}^T \rangle =0\rev{.} 
\end{align*}
Therefore, using the Pythagorean theorem we obtain
\begin{align*}
    &\|\bm{B} - \bm{Q} \bm{C}\bm{Q}^T \|_F^2 = \|\bm{B}- \bm{Q}\bm{Q}^T \bm{B}\bm{Q}\bm{Q}^T + \bm{Q}\bm{Q}^T \bm{B}\bm{Q}\bm{Q}^T - \bm{Q}\bm{C}\bm{Q}^T\|_F^2 \\
    =& \|\bm{B} - \bm{Q}\bm{Q}\bm{B}\bm{Q}\bm{Q}^T\|_F^2 +\|\bm{Q}^T \bm{B}\bm{Q} - \bm{C}\|_F^2.
\end{align*}
Thus, to minimize $\|\bm{B} - \bm{Q} \bm{C}\bm{Q}^T \|_F^2$ we should choose $\bm{C} = (\bm{Q}^T \bm{B} \bm{Q})_{(k)}$.
\end{proof}

With \Cref{lemma:constrained_best_rank_k_approximation} at hand, we can show that error of the rank $k$ truncated Nyström approximation is sandwiched between the error of two projection based rank $k$ approximations. 
\begin{lemma}\label{lemma:inequalities}
    Let $\bm{Q}$ be an orthonormal basis and let $\widehat{\bm{A}} = \bm{A} \bm{Q}(\bm{Q}^T \bm{A} \bm{Q})^{\dagger} \bm{Q}^T \bm{A}$ and suppose $\bm{A} \succeq \bm{0}$. Then the following holds\rev{
    \begin{align*}
        \|\bm{A} - (\bm{P}_{\bm{A} \bm{Q}} \bm{A})_{(k)}\|_F^2 &\leq \|\bm{A} - (\bm{P}_{\bm{A} \bm{Q}} \bm{A} \bm{P}_{\bm{A} \bm{Q}})_{(k)}\|_F^2 \\
        &\leq \|\bm{A} - \widehat{\bm{A}}_{(k)} \|_F^2 \\
        &\leq \|\bm{A}\|_F^2 - \|\widehat{\bm{A}}_{(k)} \|_F^2 \\
        &= \|\bm{A} - (\bm{P}_{\bm{A}^{1/2} \bm{Q}} \bm{A} \bm{P}_{\bm{A}^{1/2} \bm{Q}})_{(k)}\|_F^2 \\
        &\leq \|\bm{A} - (\bm{P}_{\bm{Q}}\bm{A})_{(k)}\|_F^2.
    \end{align*}}
\end{lemma}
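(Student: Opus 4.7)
The plan is to verify the chain from left to right, with the final inequality being the main obstacle. For the first two inequalities, I would let $\bm{R}$ be an orthonormal basis for $\range(\bm{A}\bm{Q})$ so that $\bm{P}_{\bm{A}\bm{Q}} = \bm{R}\bm{R}^T$. Then $(\bm{P}_{\bm{A}\bm{Q}}\bm{A}\bm{P}_{\bm{A}\bm{Q}})_{(k)} = \bm{R}(\bm{R}^T\bm{A}\bm{R})_{(k)}\bm{R}^T$ lies in the feasible set of \eqref{eq:best_constrained}, yielding the first inequality. Symmetry of $\widehat{\bm{A}}$ together with $\range(\widehat{\bm{A}})\subseteq\range(\bm{A}\bm{Q})$ shows $\widehat{\bm{A}}_{(k)} = \bm{R}(\bm{R}^T\widehat{\bm{A}}_{(k)}\bm{R})\bm{R}^T$ is in the feasible set of \eqref{eq:best_symmetric_constrained}, yielding the second.

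For the third inequality I would apply \Cref{lemma:schatten_difference} with $p=2$, which requires $\bm{A}\succeq\widehat{\bm{A}}_{(k)}\succeq\bm{0}$. From \Cref{lemma:folklore}, $\widehat{\bm{A}} = \bm{A}^{1/2}\bm{P}_{\bm{A}^{1/2}\bm{Q}}\bm{A}^{1/2}\succeq\bm{0}$ and $\bm{A} - \widehat{\bm{A}} = \bm{A}^{1/2}(\bm{I}-\bm{P}_{\bm{A}^{1/2}\bm{Q}})\bm{A}^{1/2}\succeq\bm{0}$, while truncating the eigendecomposition of the PSD matrix $\widehat{\bm{A}}$ gives $\widehat{\bm{A}}_{(k)}\preceq\widehat{\bm{A}}$.

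For the middle equality, let $\bm{S}$ be an orthonormal basis for $\range(\bm{A}^{1/2}\bm{Q})$, so $\bm{P}_{\bm{A}^{1/2}\bm{Q}}\bm{A}\bm{P}_{\bm{A}^{1/2}\bm{Q}} = \bm{S}(\bm{S}^T\bm{A}\bm{S})\bm{S}^T$ has best rank-$k$ approximation $\bm{S}(\bm{S}^T\bm{A}\bm{S})_{(k)}\bm{S}^T$. The Pythagorean split from the proof of \Cref{lemma:constrained_best_rank_k_approximation} evaluates the error as $\|\bm{A}\|_F^2 - \|(\bm{S}^T\bm{A}\bm{S})_{(k)}\|_F^2$. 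Because $\widehat{\bm{A}} = \bm{A}^{1/2}\bm{S}\bm{S}^T\bm{A}^{1/2}$ and $\bm{S}^T\bm{A}\bm{S} = (\bm{S}^T\bm{A}^{1/2})(\bm{A}^{1/2}\bm{S})$ share the same nonzero spectrum, we obtain $\|(\bm{S}^T\bm{A}\bm{S})_{(k)}\|_F^2 = \|\widehat{\bm{A}}_{(k)}\|_F^2$, which closes the equality.

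The main obstacle is the final inequality. An analogous Pythagorean split yields $\|\bm{A}-(\bm{P}_{\bm{Q}}\bm{A})_{(k)}\|_F^2 = \|\bm{A}\|_F^2 - \|(\bm{Q}^T\bm{A})_{(k)}\|_F^2$, so after cancelling $\|\bm{A}\|_F^2$ the inequality reduces to $\sum_{i=1}^k\lambda_i(\bm{A}\bm{P}_{\bm{Q}}\bm{A})\le\sum_{i=1}^k\lambda_i(\widehat{\bm{A}}^2)$. I plan to derive this from the stronger Loewner inequality $\widehat{\bm{A}}^2\succeq\bm{A}\bm{P}_{\bm{Q}}\bm{A}$ via Weyl's monotonicity. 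Setting $\bm{B} = \bm{A}^{1/2}\bm{Q}$ and squaring the factorization from \Cref{lemma:folklore},
\begin{equation*}
\widehat{\bm{A}}^2 - \bm{A}\bm{P}_{\bm{Q}}\bm{A} = \bm{A}^{1/2}\bigl(\bm{P}_{\bm{B}}\bm{A}\bm{P}_{\bm{B}} - \bm{B}\bm{B}^T\bigr)\bm{A}^{1/2},
\end{equation*}
and since $\bm{B}\bm{B}^T = \bm{P}_{\bm{B}}\bm{B}\bm{B}^T\bm{P}_{\bm{B}}$, the bracketed factor equals $\bm{P}_{\bm{B}}(\bm{A}-\bm{B}\bm{B}^T)\bm{P}_{\bm{B}} = \bm{P}_{\bm{B}}\bm{A}^{1/2}(\bm{I}-\bm{P}_{\bm{Q}})\bm{A}^{1/2}\bm{P}_{\bm{B}}\succeq\bm{0}$.
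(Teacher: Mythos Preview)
Your proposal is correct. Steps one through four coincide with the paper's proof: the first two inequalities use \Cref{lemma:constrained_best_rank_k_approximation} exactly as the paper does, the third invokes \Cref{lemma:schatten_difference}, and your equality argument is the same as the paper's (both rest on the fact that $\bm{A}^{1/2}\bm{S}\bm{S}^T\bm{A}^{1/2}$ and $\bm{S}^T\bm{A}\bm{S}$ share their nonzero spectrum), just phrased via spectra rather than via the rank-$k$ projector $\bm{P}$ the paper introduces.

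Your argument for the final inequality is genuinely different. The paper picks a rank-$k$ projector $\bar{\bm{P}}$ with $\range(\bar{\bm{P}})\subseteq\range(\bm{Q})$ and $\bar{\bm{P}}\bm{A}=(\bm{P}_{\bm{Q}}\bm{A})_{(k)}$, inserts the competitor $\bm{A}^{1/2}\bar{\bm{P}}\bm{A}^{1/2}$ into \eqref{eq:best_symmetric_constrained}, and then bounds $\|\bm{A}-\bm{A}^{1/2}\bar{\bm{P}}\bm{A}^{1/2}\|_F^2=\|(\bm{I}-\bar{\bm{P}})\bm{A}(\bm{I}-\bar{\bm{P}})\|_F^2\le\|(\bm{I}-\bar{\bm{P}})\bm{A}\|_F^2$. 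You instead reduce both sides to $\|\bm{A}\|_F^2$ minus a top-$k$ eigenvalue sum and establish the Loewner inequality $\widehat{\bm{A}}^2\succeq\bm{A}\bm{P}_{\bm{Q}}\bm{A}$ via the factorization $\widehat{\bm{A}}^2-\bm{A}\bm{P}_{\bm{Q}}\bm{A}=\bm{A}^{1/2}\bm{P}_{\bm{B}}\bm{A}^{1/2}(\bm{I}-\bm{P}_{\bm{Q}})\bm{A}^{1/2}\bm{P}_{\bm{B}}\bm{A}^{1/2}$. Your route is slightly more direct and yields a stronger intermediate result (individual eigenvalue domination $\lambda_i(\widehat{\bm{A}}^2)\ge\lambda_i(\bm{A}\bm{P}_{\bm{Q}}\bm{A})$, not merely the partial sums), which may be of independent interest. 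The paper's route, on the other hand, stays entirely within the variational framework of \Cref{lemma:constrained_best_rank_k_approximation} and avoids any spectral comparison.
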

\begin{proof}
The first inequality is immediate from \eqref{eq:best_constrained} since $(\bm{P}_{\bm{A} \bm{Q}} \bm{A}\bm{P}_{\bm{A} \bm{Q}})_{(k)}$ is a rank $k$ approximation whose range is contained in $\range(\bm{A}\bm{Q})$.
By a similar argument, the second inequality is immediate from \eqref{eq:best_symmetric_constrained} since $\widehat{\bm{A}}_{(k)}$ is a rank $k$ approximation whose range and co-range is contained in $\range(\bm{A} \bm{Q})$.
The third inequality is a consequence of Lemma~\ref{lemma:schatten_difference} for $p = 2$ since $\bm{A} \succeq \widehat{\bm{A}}_{(k)} \succeq \bm{0}$. 

To prove the equality, note that by \Cref{lemma:folklore} $$\widehat{\bm{A}} = \bm{A}^{1/2} \bm{P}_{\bm{A}^{1/2} \bm{Q}} \bm{A}^{1/2} = (\bm{P}_{\bm{A}^{1/2} \bm{Q}}\bm{A}^{1/2})^T(\bm{P}_{\bm{A}^{1/2} \bm{Q}}\bm{A}^{1/2}).$$ 
Hence, $\widehat{\bm{A}}_{(k)} = (\bm{P}_{\bm{A}^{1/2} \bm{Q}}\bm{A}^{1/2})_{(k)}^T(\bm{P}_{\bm{A}^{1/2} \bm{Q}}\bm{A}^{1/2})_{(k)}$.
\rev{Let $\bm{P}$ be an orthogonal projector onto the subspace spanned by the first $k$ left singular vectors of $\bm{P}_{\bm{A}^{1/2} \bm{Q}} \bm{A}^{1/2}$. Then, $\range(\bm{P}) \subseteq \range(\bm{A}^{1/2} \bm{Q})$ and $$\widehat{\bm{A}}_{(k)} = (\bm{P}\bm{P}_{\bm{A}^{1/2} \bm{Q}}\bm{A}^{1/2})^T(\bm{P}\bm{P}_{\bm{A}^{1/2} \bm{Q}}\bm{A}^{1/2}) = (\bm{P} \bm{A}^{1/2})^T(\bm{P} \bm{A}^{1/2}).$$}
Hence,
\begin{align*}
    \|\bm{A}\|_F^2 - \|\widehat{\bm{A}}_{(k)}\|_F^2 = \|\bm{A}\|_F^2 - \|(\bm{P} \bm{A}^{1/2})^T(\bm{P} \bm{A}^{1/2})\|_F^2 &= \|\bm{A}\|_F^2 - \|\bm{P} \bm{A} \bm{P}\|_F^2 \\
    &= \|\bm{A} - \bm{P} \bm{A} \bm{P}\|_F^2.
\end{align*}
Finally, noting that $\bm{P} \bm{A} \bm{P} = (\bm{P}_{\bm{A}^{1/2} \bm{Q}} \bm{A}\bm{P}_{\bm{A}^{1/2} \bm{Q}})_{(k)}$ yields the desired equality. 

For the last inequality in \Cref{lemma:inequalities}, we let $\bar{\bm{P}}$ be an orthogonal projector so that $\range(\bar{\bm{P}}) \subseteq \range(\bm{Q})$ and $\bar{\bm{P}} \bm{A} = (\bm{P}_{\bm{Q}} \bm{A})_{(k)}$.
Note that $\bm{A}^{1/2}\bar{\bm{P}} \bm{A}^{1/2}$ is a rank $k$ approximation to $\bm{A}$ whose range and co-range are both contained in $\range(\bm{A}^{1/2} \bm{Q})$.
By \eqref{eq:best_symmetric_constrained} we have that
\rev{
\begin{align*}
    \|\bm{A} - (\bm{P}_{\bm{A}^{1/2} \bm{Q}} \bm{A}\bm{P}_{\bm{A}^{1/2} \bm{Q}})_{(k)}\|_F^2 &\leq \|\bm{A} - \bm{A}^{1/2} \bar{\bm{P}}\bm{A}^{1/2}\|_F^2 \\
    &=\|(\bm{I}-\bar{\bm{P}}) \bm{A} (\bm{I}-\bar{\bm{P}})\|_F^2 \\
    & \leq \|(\bm{I}-\bar{\bm{P}}) \bm{A}\|_F^2 \\
    &= \|\bm{A} - (\bm{P}_{\bm{Q}} \bm{A})_{(k)}\|_F^2,
\end{align*}}
\rev{as required.}
\end{proof}
\begin{proof}[Proof of \Cref{theorem:frobenius_proj_implies_funnystrom}]
    The proof of our main result in this section follows immediately from Lemma~\ref{lemma:inequalities}. \rev{In particular, we have}
    \begin{equation*}
        \|\bm{A}\|_F^2 - \|\widehat{\bm{A}}_{(k)}\|_F^2 \leq \|\bm{A} - (\bm{P}_{\bm{Q}}\bm{A})_{(k)}\|_F^2 = \|\bm{A} - (\bm{Q}\bm{Q}^T \bm{A})_{(k)}\|_F^2,
    \end{equation*}
    \rev{as required.}
    \end{proof}

\begin{remark}
    \label{remark1}
    We remark on a few additional consequence of Lemma~\ref{lemma:inequalities} that may be of independent interest.
    \begin{enumerate}
        \item[1.] The lemma implies that $\|\bm{A} - (\bm{P}_{\bm{A} \bm{Q}} \bm{A})_{(k)}\|_F^2  \leq \|\bm{A} - (\bm{P}_{\bm{Q}} \bm{A})_{(k)}\|_F^2$ and that $\|\bm{A} - (\bm{P}_{\bm{A} \bm{Q}} \bm{A} \bm{P}_{\bm{A}\bm{Q}})_{(k)}\|_F^2 \leq \|\bm{A} - (\bm{P}_{ \bm{Q}} \bm{A} \bm{P}_{ \bm{Q}})_{(k)}\|_F^2$. Hence, if we approximate $\bm{A}$ via either a one-sided or two-sided projection onto $\bm{A}\bm{Q}$, the error is always better than if we simply project onto $\bm{Q}$.
        \rev{Since computing $\bm{AQ}$ is equivalent to performing one step of subspace iteration on $\bm{Q}$, we inductively establish an intuitive fact:}
        that subspace iteration monotonically \rev{decreases} Frobenius norm low-rank approximation error. Via a change of basis, a similar result is true for rectangular matrices. One can show that $\|\bm{A} - (\bm{P}_{(\bm{A}\bm{A}^T)^{p/2} \bm{Q}} \bm{A})_{(k)}\|_F^2 \leq \|\bm{A} - (\bm{P}_{\bm{Q}} \bm{A})_{(k)}\|_F^2$ for any positive integer $p$.
        \item[2.] If one has obtained an orthonormal basis $\bm{Q}$ with $\ell$ columns so that $\|\bm{A} - (\bm{P}_{\bm{Q}} \bm{A})_{(k)}\|_F \leq \epsilon$ then $\|\bm{A} - (\bm{P}_{\bm{A}\bm{Q}} \bm{A}\bm{P}_{\bm{A}\bm{Q}} )_{(k)}\|_F \leq \epsilon$. This implies that a relative error low-rank approximation guarantee for one-sided projection translates to a guarantee for two-sided projection, at the cost of at most $\ell$ extra matrix-vector products with $\bm{A}$ to form $\bm{AQ}$.
        \item[3.] Given a basis $\bm{Q}$, we require $\ell$ matrix-vector multiplications with $\bm{A}$ to either form the one-sided projection $(\bm{Q}\bm{Q}^T\bm{A})_{(k)}$ or to form the rank $k$ truncated Nyström approximation $\widehat{\bm{A}}_{(k)}$ where $\widehat{\bm{A}} = \bm{A} \bm{Q}(\bm{Q}^T \bm{A} \bm{Q})^{\dagger} \bm{Q}^T \bm{A}$. However, \rev{the} truncated Nyström approximation always provides better error in the Frobenius norm, so should be preferred.
    \end{enumerate}
\end{remark}






\subsection{\rev{Projections to Nyström:} Nuclear norm guarantees}
In this section we establish similar guarantees as in the previous section, but for the nuclear norm. Specifically, we prove the following theorem.
\begin{theorem}[\Cref{theorem:nuclear_grey_box_intro} restated]\label{theorem:nuclear_norm_gray_box}
Let $\bm{A} \succeq \bm{0}$ and let $\bm{Q}$ be an orthonormal basis so that, for $\varepsilon \geq 0$,
\begin{equation*}
    \|\bm{A} - (\bm{Q}\bm{Q}^T \bm{A})_{(k)}\|_* \leq (1+\varepsilon) \|\bm{A} - \bm{A}_{(k)}\|_*.
\end{equation*}
Then if $\widehat{\bm{A}} = \bm{A}\bm{Q} (\bm{Q}^T \bm{A} \bm{Q})^{\dagger} \bm{Q}^T \bm{A}$ we have
\begin{equation*}
    \|\bm{A} - \widehat{\bm{A}}_{(k)}\|_* \leq (1+\varepsilon) \|\bm{A} - \bm{A}_{(k)}\|_*.
\end{equation*}
\end{theorem}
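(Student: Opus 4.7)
My plan is to convert the nuclear norm bound into a comparison of traces, and then prove the resulting trace inequality by Ky Fan's maximum principle applied to a carefully chosen test subspace inside $\range(\bm{Q})$. First, \Cref{lemma:folklore} gives $\bm{A} \succeq \widehat{\bm{A}} \succeq \widehat{\bm{A}}_{(k)} \succeq \bm{0}$ (the second inequality because $\widehat{\bm{A}}_{(k)}$ just drops the smallest eigenvalues of the PSD matrix $\widehat{\bm{A}}$), so \Cref{lemma:schatten_difference} with $p=1$ yields $\|\bm{A} - \widehat{\bm{A}}_{(k)}\|_* = \tr(\bm{A}) - \tr(\widehat{\bm{A}}_{(k)})$. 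Moreover, since the trace of any square matrix $\bm{X}$ is bounded above by its nuclear norm (as $\sum_i \lambda_i(\bm{X}) \le \sum_i |\lambda_i(\bm{X})| \le \sum_i \sigma_i(\bm{X})$ by Weyl), applying this bound to $\bm{X} = \bm{A} - (\bm{Q}\bm{Q}^T\bm{A})_{(k)}$ gives
\[
  \tr(\bm{A}) - \tr\bigl((\bm{Q}\bm{Q}^T\bm{A})_{(k)}\bigr) \le \|\bm{A} - (\bm{Q}\bm{Q}^T\bm{A})_{(k)}\|_* \le (1+\varepsilon)\|\bm{A} - \bm{A}_{(k)}\|_*.
\]
Chaining these two displays, the entire theorem reduces to the single trace inequality $\tr(\widehat{\bm{A}}_{(k)}) \ge \tr((\bm{Q}\bm{Q}^T\bm{A})_{(k)})$.

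To prove this reduced inequality, I would realize $(\bm{Q}\bm{Q}^T\bm{A})_{(k)}$ in the form $\bm{Z}\bm{Z}^T\bm{A}$ for a convenient $\bm{Z}$ whose range sits inside $\range(\bm{Q})$. Take the compact SVD $\bm{Q}^T\bm{A} = \bm{U}_1\bm{\Sigma}_1\bm{V}_1^T$, let $\bm{U}_1^{(k)}$ collect its top-$k$ left singular vectors, and set $\bm{Z} := \bm{Q}\bm{U}_1^{(k)}$. Then $\bm{Z}^T\bm{Z} = \bm{U}_1^{(k)T}\bm{Q}^T\bm{Q}\bm{U}_1^{(k)} = \bm{I}_k$, and
\[
  \bm{Z}\bm{Z}^T\bm{A} = \bm{Q}\bm{U}_1^{(k)}\bm{U}_1^{(k)T}\bm{Q}^T\bm{A} = \bm{Q}(\bm{Q}^T\bm{A})_{(k)} = (\bm{Q}\bm{Q}^T\bm{A})_{(k)},
\]
using the identity noted in the footnote at the start of \Cref{section:gray_box}. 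In particular, $\tr((\bm{Q}\bm{Q}^T\bm{A})_{(k)}) = \tr(\bm{Z}\bm{Z}^T\bm{A}) = \tr(\bm{Z}^T\bm{A}\bm{Z})$.

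Finally, I would apply Ky Fan's maximum principle to the PSD matrix $\widehat{\bm{A}}$ using the $k$ orthonormal columns of $\bm{Z}$ as a test frame:
\[
  \tr(\widehat{\bm{A}}_{(k)}) = \sum_{i=1}^k \lambda_i(\widehat{\bm{A}}) \ge \tr(\bm{Z}^T\widehat{\bm{A}}\bm{Z}).
\]
The last ingredient is the standard Nyström ``agreement on $\range(\bm{Q})$'' identity $\bm{Q}^T\widehat{\bm{A}}\bm{Q} = \bm{S}\bm{S}^{\dagger}\bm{S} = \bm{S} = \bm{Q}^T\bm{A}\bm{Q}$ with $\bm{S} := \bm{Q}^T\bm{A}\bm{Q}$, which, combined with $\bm{Z} = \bm{Q}\bm{U}_1^{(k)} \subseteq \range(\bm{Q})$, gives $\bm{Z}^T\widehat{\bm{A}}\bm{Z} = \bm{U}_1^{(k)T}(\bm{Q}^T\widehat{\bm{A}}\bm{Q})\bm{U}_1^{(k)} = \bm{U}_1^{(k)T}\bm{S}\bm{U}_1^{(k)} = \bm{Z}^T\bm{A}\bm{Z}$. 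Stringing everything together yields $\tr(\widehat{\bm{A}}_{(k)}) \ge \tr((\bm{Q}\bm{Q}^T\bm{A})_{(k)})$, closing the argument. The main obstacle is recognizing that one can choose $\bm{Z}$ inside $\range(\bm{Q})$ so that $\bm{Z}\bm{Z}^T\bm{A}$ exactly equals $(\bm{Q}\bm{Q}^T\bm{A})_{(k)}$; once this is in place, Ky Fan plus the Nyström identity finish things cleanly without needing a separate sandwich lemma in the spirit of \Cref{lemma:inequalities}.
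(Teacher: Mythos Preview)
Your proof is correct, and it takes a genuinely different route from the paper's. Both arguments begin identically, using $\bm{A}\succeq\widehat{\bm{A}}_{(k)}\succeq\bm{0}$ to write $\|\bm{A}-\widehat{\bm{A}}_{(k)}\|_*=\tr(\bm{A})-\tr(\widehat{\bm{A}}_{(k)})$ and reducing the claim to a trace comparison. From there the paper passes to $\bm{A}^{1/2}$ and rewrites $\tr(\bm{A})-\tr(\widehat{\bm{A}}_{(k)})$ as the Frobenius error $\|\bm{A}^{1/2}-(\bm{P}_{\bm{A}^{1/2}\bm{Q}}\bm{A}^{1/2})_{(k)}\|_F^2$, then invokes the Frobenius sandwich \Cref{lemma:inequalities} together with \Cref{lemma:constrained_best_rank_k_approximation} to bound this by $\tr(\bm{A})-\tr(\bm{P}\bm{A})$ for a rank-$k$ projector $\bm{P}$ with $\bm{P}\bm{A}=(\bm{Q}\bm{Q}^T\bm{A})_{(k)}$. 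You instead stay entirely at the trace level: you realize $(\bm{Q}\bm{Q}^T\bm{A})_{(k)}=\bm{Z}\bm{Z}^T\bm{A}$ with $\bm{Z}\subseteq\range(\bm{Q})$ orthonormal, use the Nystr\"om agreement identity $\bm{Q}^T\widehat{\bm{A}}\bm{Q}=\bm{Q}^T\bm{A}\bm{Q}$ to get $\bm{Z}^T\widehat{\bm{A}}\bm{Z}=\bm{Z}^T\bm{A}\bm{Z}$, and finish with Ky Fan. Your projector $\bm{Z}\bm{Z}^T$ is exactly the paper's $\bm{P}$, but your argument is more self-contained: it avoids the detour through $\bm{A}^{1/2}$ and does not rely on \Cref{lemma:inequalities} or \Cref{lemma:constrained_best_rank_k_approximation}. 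The paper's approach, on the other hand, has the virtue of unifying the nuclear-norm result with the Frobenius-norm machinery already developed, essentially getting \Cref{theorem:nuclear_norm_gray_box} as a corollary of the Frobenius analysis applied to the square root.
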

\begin{proof}
Since $\bm{A} \succeq \widehat{\bm{A}}_{(k)} \succeq\bm{0}$ we know that 
\begin{equation*}
    \|\bm{A} - \widehat{\bm{A}}_{(k)} \|_* = \tr(\bm{A} - \widehat{\bm{A}}_{(k)}).
\end{equation*}
Then, since $\widehat{\bm{A}}_{(k)} = (\bm{P}_{\bm{A}^{1/2} \bm{Q}} \bm{A}^{1/2})_{(k)}^T(\bm{P}_{\bm{A}^{1/2} \bm{Q}} \bm{A}^{1/2})_{(k)}$ by \Cref{lemma:folklore}, we have  
\begin{equation*}
    \tr(\bm{A} - \widehat{\bm{A}}_{(k)}) = \|\bm{A}^{1/2}\|_F^2 - \|(\bm{P}_{\bm{A}^{1/2} \bm{Q}} \bm{A}^{1/2})_{(k)}\|_F^2 = \|\bm{A}^{1/2}-(\bm{P}_{\bm{A}^{1/2} \bm{Q}} \bm{A}^{1/2})_{(k)}\|_F^2.
\end{equation*}
Choose an orthogonal projector $\bm{P}$ \rev{that satisfies} $\range(\bm{P}) \subseteq \range(\bm{Q})$ and $\bm{P}\bm{A} = (\bm{Q}\bm{Q}^T \bm{A})_{(k)}$. Finally, by Lemma~\ref{lemma:inequalities} and Lemma~\ref{lemma:constrained_best_rank_k_approximation} we have\rev{
\begin{align*}
    \|\bm{A}^{1/2}-(\bm{P}_{\bm{A}^{1/2} \bm{Q}} \bm{A}^{1/2})_{(k)}\|_F^2 & \leq \|\bm{A}^{1/2}-(\bm{P}_{\bm{Q}} \bm{A}^{1/2})_{(k)}\|_F^2 \\
    &\leq \|\bm{A}^{1/2} - \bm{P} \bm{A}^{1/2}\|_F \\
    &= \tr(\bm{A}) - \tr(\bm{P}\bm{A} \bm{P})\\
    & = \tr(\bm{A})-\tr(\bm{P} \bm{A}) \\
    &\leq  \|\bm{A} - (\bm{Q}\bm{Q}^T \bm{A})_{(k)}\|_* \\
    & \leq (1+\varepsilon)\|\bm{A}-\bm{A}_{(k)}\|_*,
\end{align*}}
which yields the desired inequality. 
\end{proof}


\subsection{\rev{Projections to Nyström:} Operator norm guarantees}\label{section:operator_gray_box}
In this section we consider the operator norm. When $\bm{Q}$ has exactly $k$ columns, \cite{tropp2023randomized} establishes the following guarantee. 
\begin{theorem}[{\cite[Lemma 5.2]{tropp2023randomized}}]\label{theorem:spectral_gray_box}
Let $\bm{A} \succeq \bm{0}$ and let $\bm{Q} \in \mathbb{R}^{n \times k}$ be an orthonormal basis so that, for some $\varepsilon\geq 0$,
\begin{equation*}
    \|\bm{A} - \bm{Q}\bm{Q}^T \bm{A}\|_2 \leq (1+\varepsilon) \|\bm{A}-\bm{A}_{(k)}\|_2. 
\end{equation*}
Then if $\widehat{\bm{A}} = \bm{A}\bm{Q} (\bm{Q}^T \bm{A} \bm{Q})^{\dagger} \bm{Q}^T \bm{A}$ we have
\begin{equation*}
    \|\bm{A} - \widehat{\bm{A}}\|_2 \leq (1+\varepsilon) \|\bm{A}-\bm{A}_{(k)}\|_2. 
\end{equation*}
\end{theorem}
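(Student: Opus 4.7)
The plan is to exploit the factorized representation $\widehat{\bm{A}} = \bm{A}^{1/2}\bm{P}_{\bm{A}^{1/2}\bm{Q}}\bm{A}^{1/2}$ from \Cref{lemma:folklore} to reduce the statement to a standard best-approximation property of orthogonal projectors, then unwind via elementary operator-norm identities.

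First, I would write $\bm{A} - \widehat{\bm{A}} = \bm{A}^{1/2}(\bm{I} - \bm{P}_{\bm{A}^{1/2}\bm{Q}})\bm{A}^{1/2}$ and observe that, because $\bm{I} - \bm{P}_{\bm{A}^{1/2}\bm{Q}}$ is symmetric and idempotent, this equals $\bm{X}^T\bm{X}$ for $\bm{X} := (\bm{I}-\bm{P}_{\bm{A}^{1/2}\bm{Q}})\bm{A}^{1/2}$. Consequently $\|\bm{A}-\widehat{\bm{A}}\|_2 = \|\bm{X}\|_2^2$, so it suffices to bound $\|\bm{X}\|_2$. Next, I would invoke the characterization of $\bm{P}_{\bm{A}^{1/2}\bm{Q}}$ as the best projection into $\range(\bm{A}^{1/2}\bm{Q})$: a column-wise Pythagoras argument yields that, for any matrix $\bm{M}$ with $\range(\bm{M}) \subseteq \range(\bm{A}^{1/2}\bm{Q})$,
\[
\|(\bm{I}-\bm{P}_{\bm{A}^{1/2}\bm{Q}})\bm{A}^{1/2}\|_2 \leq \|\bm{A}^{1/2} - \bm{M}\|_2.
\]
Choosing the legal competitor $\bm{M} = \bm{A}^{1/2}\bm{Q}\bm{Q}^T$ gives $\|\bm{X}\|_2 \leq \|\bm{A}^{1/2}(\bm{I}-\bm{Q}\bm{Q}^T)\|_2$.

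Finally, I would square this bound and convert back to an expression involving $\bm{A}$ itself through the identity $\|\bm{M}\|_2^2 = \|\bm{M}^T\bm{M}\|_2$, combined with submultiplicativity of the operator norm and $\|\bm{I}-\bm{Q}\bm{Q}^T\|_2 \leq 1$:
\begin{align*}
\|\bm{A}-\widehat{\bm{A}}\|_2 = \|\bm{X}\|_2^2 &\leq \|\bm{A}^{1/2}(\bm{I}-\bm{Q}\bm{Q}^T)\|_2^2 \\
&= \|(\bm{I}-\bm{Q}\bm{Q}^T)\bm{A}(\bm{I}-\bm{Q}\bm{Q}^T)\|_2 \\
&\leq \|(\bm{I}-\bm{Q}\bm{Q}^T)\bm{A}\|_2 = \|\bm{A}-\bm{Q}\bm{Q}^T\bm{A}\|_2,
\end{align*}
and the hypothesis then produces the claimed bound $(1+\varepsilon)\|\bm{A}-\bm{A}_{(k)}\|_2$.

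I do not anticipate a significant obstacle: every step is an algebraic identity or a standard projection/norm inequality, and the crucial conceptual move is that \Cref{lemma:folklore} splits $\bm{A}$ into two $\bm{A}^{1/2}$ factors, so Nyström approximation appears as an orthogonal projection in the natural ``square-root'' geometry. Notably, the argument never uses that $\bm{Q}$ has exactly $k$ columns, and so implicitly gives the slightly stronger statement $\|\bm{A}-\widehat{\bm{A}}\|_2 \leq \|\bm{A}-\bm{Q}\bm{Q}^T\bm{A}\|_2$ for arbitrary $\bm{Q}$; the restriction $\bm{Q} \in \mathbb{R}^{n\times k}$ is needed only to ensure $\widehat{\bm{A}}$ has rank at most $k$ so that the comparison with $\|\bm{A}-\bm{A}_{(k)}\|_2$ is the intended benchmark.
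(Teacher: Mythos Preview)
Your argument is correct. The paper does not supply its own proof of this statement; it quotes it directly from \cite[Lemma 5.2]{tropp2023randomized}, where in fact the stronger inequality $\|\bm{A}-\widehat{\bm{A}}\| \leq \|\bm{A}-\bm{Q}\bm{Q}^T\bm{A}\|$ is shown for every unitarily invariant norm (as the paper remarks just before \Cref{theorem:frobenius_proj_implies_funnystrom}). Your route---use \Cref{lemma:folklore} to write $\bm{A}-\widehat{\bm{A}}=\bm{X}^T\bm{X}$ with $\bm{X}=(\bm{I}-\bm{P}_{\bm{A}^{1/2}\bm{Q}})\bm{A}^{1/2}$, replace the projector by the competitor $\bm{A}^{1/2}\bm{Q}\bm{Q}^T$, and convert back via $\|\bm{M}\|_2^2=\|\bm{M}^T\bm{M}\|_2$---is essentially the standard one specialized to the operator norm. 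One small comment: the ``column-wise Pythagoras'' justification is more than you need; since $(\bm{I}-\bm{P}_{\bm{A}^{1/2}\bm{Q}})\bm{M}=\bm{0}$ whenever $\range(\bm{M})\subseteq\range(\bm{A}^{1/2}\bm{Q})$, the bound $\|(\bm{I}-\bm{P}_{\bm{A}^{1/2}\bm{Q}})\bm{A}^{1/2}\|_2\leq\|\bm{A}^{1/2}-\bm{M}\|_2$ follows directly from submultiplicativity and $\|\bm{I}-\bm{P}_{\bm{A}^{1/2}\bm{Q}}\|_2\leq 1$. Your closing observation that the proof never uses the column count of $\bm{Q}$ is exactly the content of the cited lemma.
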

Ideally, we would extend this guarantee to the case when $\bm{Q}$ has $\ell > k$ columns, as in \Cref{theorem:frobenius_proj_implies_funnystrom,theorem:nuclear_norm_gray_box} for the Frobenius and nuclear norms. I.e., we might hope to prove that $\|\bm{A} - (\bm{Q} \bm{Q}^T \bm{A})_{(k)}\|_2 \leq (1+\varepsilon)\|\bm{A} - \bm{A}_{(k)}\|_2$ implies $\|\bm{A} - \widehat{\bm{A}}_{(k)}\|_2 \leq (1+\varepsilon)\|\bm{A} - \bm{A}_{(k)}\|_2$. Interestingly, however, we show that doing so is impossible. In particular, consider the following counterexample.
\begin{equation}\label{eq:counterexample}
    \bm{A} = \begin{bmatrix}
        9.627   &   1.538  &   -0.717 &   1.418 &    -0.309\\
        1.538   &   8.084  &   1.904 &    -1.868    & 0.573\\
        -0.717  &   1.904  &   1.353 &    -1.538    & -1.300\\
        1.418   &   -1.868    &    -1.538   &  2.534  &  0.169\\
        -0.309  &   0.573  &   -1.300  &  0.169 &   6.055
    \end{bmatrix},
    \quad
    \bm{Q} = \begin{bmatrix}
        1 & 0 & 0 \\
        0 & 1 & 0 \\
        0 & 0 & 1 \\
        0 & 0 & 0 \\
        0 & 0 & 0
    \end{bmatrix},
\end{equation}
\rev{where $\bm{A}$ was found by a random search over SPSD matrices.} For these matrices, $\|\bm{A} - (\bm{Q} \bm{Q}^T \bm{A})_{(2)}\|_2 \approx (1+2.59 \times 10^{-8}) \|\bm{A}-\bm{A}_{(2)}\|_2$ whereas $\|\bm{A} - \widehat{\bm{A}}_{(2)}\|_2 \approx (1+5.75 \times 10^{-3}) \|\bm{A}-\bm{A}_{(2)}\|_2$, so $(\bm{Q}\bm{Q}^T \bm{A})_{(k)}$ is a better rank $k$ approximation to $\bm{A}$ compared to $\widehat{\bm{A}}_{(k)}$. As guaranteed by \Cref{theorem:spectral_gray_box}, we do at least have that $3.75 \approx \|\bm{A} - \widehat{\bm{A}}\|_2 < \|\bm{A} - \bm{Q} \bm{Q}^T \bm{A}\|_2 \approx 6.24$. Via the same counterexample, we also have the following.

\begin{remark}
    In Section~\ref{section:frob_gray_box} we showed that Frobenius norm low-rank approximation error decreases monotonically in the number of subspace iterations (see \Cref{remark1}). The same is not true in the operator norm. To see this, let $\bm{A}$ and $\bm{Q}$ be as in \eqref{eq:counterexample}. We can check that
    \begin{equation*}
        \|\bm{A} - (\bm{P}_{\bm{Q}} \bm{A})_{(2)}\|_2 \approx 6.449 < 6.455 \approx \|\bm{A} - (\bm{P}_{\bm{A}\bm{Q}} \bm{A})_{(2)}\|_2.
    \end{equation*}
\end{remark}

\subsection{\rev{Projections to Nyström:} Eigenvalue guarantee\rev{s}}
We conclude the theoretical part of the paper by noting a guarantee for eigenvalue estimation. Specifically,
if we have a basis $\bm{Q}$ so that \rev{the} top $k$ singular values of $\bm{Q} \bm{Q}^T \bm{A}$ are estimates of the eigenvalues of $\bm{A}$, then the eigenvalues of the Nyström approximation $\widehat{\bm{A}} = \bm{A} \bm{Q} (\bm{Q}^T \bm{A} \bm{Q})^{\dagger} \bm{Q}^T \bm{A}$ can only be better estimates. 
\begin{theorem}
Let $\bm{A} \succeq \bm{0}$ and let \(\bm{Q}\) be an orthonormal basis so that \(\lambda_i - \varepsilon_i \leq \sigma_{i}(\bm{Q}^T \bm{A}) \leq \lambda_i\) for \(i=1,...,k\) and for  $\varepsilon_1,\ldots,\varepsilon_k \geq 0$.
Then if $\widehat{\lambda}_i$ is the $i^\text{th}$ \rev{largest} eigenvalue of \(\widehat{\bm{A}} = \bm{A}\bm{Q} (\bm{Q}^T \bm{A} \bm{Q})^{\dagger} \bm{Q}^T \bm{A}\), we have \(\lambda_i - \varepsilon_i \leq \widehat{\lambda}_i \leq \lambda_i\) for \(i=1,...,k\).
\end{theorem}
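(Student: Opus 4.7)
The plan is to split the claim into the two bounds $\widehat{\lambda}_i \leq \lambda_i$ and $\widehat{\lambda}_i \geq \lambda_i - \varepsilon_i$, and derive them from two classical tools: Weyl's monotonicity principle and the Poincar\'e (Cauchy) interlacing/separation theorem.

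For the upper bound $\widehat{\lambda}_i \leq \lambda_i$, I would invoke the PSD error identity used throughout the paper: by \Cref{lemma:folklore}, $\bm{A} - \widehat{\bm{A}} = \bm{A}^{1/2}(\bm{I} - \bm{P}_{\bm{A}^{1/2}\bm{Q}})\bm{A}^{1/2} \succeq \bm{0}$, so $\bm{A} \succeq \widehat{\bm{A}}$. Weyl's monotonicity principle then gives $\lambda_i(\bm{A}) \geq \lambda_i(\widehat{\bm{A}}) = \widehat{\lambda}_i$.

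For the lower bound, my plan hinges on the identity $\widehat{\bm{A}}\bm{Q} = \bm{A}\bm{Q}$. This follows cleanly from \Cref{lemma:folklore}, since $\bm{P}_{\bm{A}^{1/2}\bm{Q}}$ acts as the identity on $\range(\bm{A}^{1/2}\bm{Q})$:
\begin{equation*}
    \widehat{\bm{A}}\bm{Q} = \bm{A}^{1/2}\bm{P}_{\bm{A}^{1/2}\bm{Q}}\bm{A}^{1/2}\bm{Q} = \bm{A}^{1/2}\bm{A}^{1/2}\bm{Q} = \bm{A}\bm{Q}.
\end{equation*}
Transposing yields $\bm{Q}^T\widehat{\bm{A}} = \bm{Q}^T\bm{A}$, and composing the two gives
\begin{equation*}
    \bm{Q}^T\widehat{\bm{A}}^2\bm{Q} = (\bm{Q}^T\widehat{\bm{A}})(\widehat{\bm{A}}\bm{Q}) = (\bm{Q}^T\bm{A})(\bm{A}\bm{Q}) = \bm{Q}^T\bm{A}^2\bm{Q}.
\end{equation*}
Using symmetry of $\bm{A}$, $\sigma_i(\bm{Q}^T\bm{A})^2 = \lambda_i(\bm{Q}^T\bm{A}^2\bm{Q}) = \lambda_i(\bm{Q}^T\widehat{\bm{A}}^2\bm{Q})$. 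Now applying Cauchy interlacing (Poincar\'e separation) to the SPSD matrix $\widehat{\bm{A}}^2$ with the orthonormal compression $\bm{Q}$ gives $\lambda_i(\bm{Q}^T\widehat{\bm{A}}^2\bm{Q}) \leq \lambda_i(\widehat{\bm{A}}^2) = \widehat{\lambda}_i^2$. Since both sides are nonnegative, taking square roots yields $\sigma_i(\bm{Q}^T\bm{A}) \leq \widehat{\lambda}_i$, and combining with the hypothesis $\lambda_i - \varepsilon_i \leq \sigma_i(\bm{Q}^T\bm{A})$ closes the argument.

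The main conceptual hurdle is simply spotting the right bridge between $\sigma_i(\bm{Q}^T\bm{A})$ (a linear-scale object) and $\widehat{\lambda}_i$ (an eigenvalue of a sandwiched projection). Naive bounds comparing $\widehat{\bm{A}}$ directly to $\bm{Q}^T\bm{A}$ do not match units; squaring to $\widehat{\bm{A}}^2$ and exploiting the projection identity $\widehat{\bm{A}}\bm{Q} = \bm{A}\bm{Q}$ is what makes Cauchy interlacing directly applicable. Everything else is routine.
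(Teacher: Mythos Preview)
Your proof is correct and follows essentially the same approach as the paper. Both arguments hinge on the identity $\bm{Q}^T\widehat{\bm{A}} = \bm{Q}^T\bm{A}$ obtained from \Cref{lemma:folklore}, and then pass from $\sigma_i(\bm{Q}^T\widehat{\bm{A}})$ to $\widehat{\lambda}_i$; the paper does this in one line by citing the singular value inequality $\sigma_i(\bm{Q}^T\widehat{\bm{A}}) \leq \sigma_i(\widehat{\bm{A}})$, whereas you unpack that inequality via squaring and Cauchy interlacing on $\widehat{\bm{A}}^2$, which is exactly how that singular value inequality is proven.
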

\begin{proof}
Notice that, by \Cref{lemma:folklore}, we have
\begin{equation*}
    \bm{Q}^T\widehat{\bm{A}}
    = (\bm{Q}^T \bm{A}^{1/2} \bm{P}_{\bm{A}^{1/2} \bm{Q}} )\bm{A}^{1/2} 
    = \bm{Q}^T\bm{A}.
\end{equation*}
Therefore, by applying a standard singular value inequality \cite[p.452]{matrixanalysis} we have
\[
    \widehat{\lambda_i}
    = \sigma_i(\widehat{\bm{A}})
    \geq \sigma_i(\bm{Q}^T\widehat{\bm{A}})
    = \sigma_i(\bm{Q}^T\bm{A})
    \geq \lambda_i - \varepsilon_i.
\]
We complete the proof by noting that \(\widehat{\lambda_i}\leq \lambda_i\) because \(\widehat{\bm{A}} \preceq \bm{A}\) \cite[Corollary 7.7.4 (c)]{matrixanalysis}.
\end{proof}

\section{Numerical experiments}
In this section, we numerically verify our theoretical results. Experiments were performed on a MacBook Pro in MATLAB (v. 2020a) and scripts to reproduce our figures are available at \url{https://github.com/davpersson/funNystrom-v2}. 

In all our experiments, we begin with computing an orthonormal basis $\bm{Q}$ with $\ell \geq k$ columns. 
In Sections~\ref{section:numex1}-\ref{section:numex3} we outline three different algorithms for doing so. 
Next, using the orthonormal basis $\bm{Q}$, we construct a Nyström approximation $\widehat{\bm{A}}$ as defined in \eqref{eq:nystrom} and the projection based approximation $\bm{Q}\bm{Q}^T \bm{A}$. 
Note that once $\bm{Q}$ is computed, constructing the two approximations comes at the same computational cost. 
Then, we truncate $\widehat{\bm{A}}$ and $\bm{Q}\bm{Q}^T \bm{A}$ to rank $k$ to obtain $\widehat{\bm{A}}_{(k)}$ and $(\bm{Q}\bm{Q}^T \bm{A})_{(k)}$. 
Finally, we compare the following quantities
\begin{align*}
    &\varepsilon_{\text{projection}} = \frac{\|\bm{A}-(\bm{Q}\bm{Q}^T \bm{A})_{(k)}\|}{\|\bm{A}-\bm{A}_{(k)}\|} -1;\\
    &\varepsilon_{\text{Nyström}} = \frac{\|\bm{A}-\widehat{\bm{A}}_{(k)}\|}{\|\bm{A}-\bm{A}_{(k)}\|} -1;\\
    &\varepsilon_{\text{funNyström}} = \frac{\|f(\bm{A})-f(\widehat{\bm{A}}_{(k)})\|}{\|f(\bm{A})-f(\bm{A}_{(k)})\|} -1,
\end{align*}
where $\|\cdot\| = \|\cdot\|_*, \|\cdot\|_F$ or $\|\cdot\|_2$. For comparing accuracy in estimating eigenvalues we use the alternative metrics:
\begin{align*}
    &\varepsilon_{\text{projection}} = \max\limits_{i=1,\ldots,k} \left\{\frac{\lambda_i - \sigma_i(\bm{Q}^T \bm{A})}{\lambda_i}\right\};\\
    &\varepsilon_{\text{Nyström}} = \max\limits_{i=1,\ldots,k} \left\{\frac{\lambda_i - \widehat{\lambda}_i}{\lambda_i}\right\};\\
    &\varepsilon_{\text{funNyström}} = \max\limits_{i=1,\ldots,k} \left\{\frac{f(\lambda_i) - f(\widehat{\lambda}_i)}{f(\lambda_i)}\right\}.
\end{align*}
Our theory suggests that, for the Frobenius norm, nuclear norm, or for eigenvalue estimation, $\varepsilon_{\text{projection}} \geq \varepsilon_{\text{Nyström}} \geq \varepsilon_{\text{funNyström}}$. For the operator norm, we expect the second inequality to hold, but we have shown a counterexample to the first in \Cref{section:operator_gray_box}. 
However, in our experiments we generally observe that $\varepsilon_{\text{projection}} \geq \varepsilon_{\text{Nyström}}$ even when $\|\cdot\| = \|\cdot\|_2$.

\subsection{Column subset selection}\label{section:numex1}
In this experiment we compute the orthonormal basis $\bm{Q}$ using the randomly pivoted Cholesky algorithm \cite[Algorithm 2.1]{chen2022randomly}. 
In this setting, $\bm{Q} = \begin{bmatrix} \bm{e}_{i_1} & \ldots & \bm{e}_{i_{\ell}} \end{bmatrix}$ where $\{i_1,\ldots,i_{\ell}
\} \subseteq \{1,\ldots,n\}$ is an index set returned by the algorithm, and $\bm{e}_i$ is the $i^{\text{th}}$ standard basis vector. 
We set $k = 10$ and $\ell = k + q$ for $q = 0,\ldots,6$. 
We let $\bm{A} \in \mathbb{R}^{1000\times 1000}$ be defined by
\begin{equation*}
    \bm{A}_{ij} = \left(\left(\frac{i}{1000}\right)^{10} + \left(\frac{j}{1000}\right)^{10}\right)^{\frac{1}{10}},
\end{equation*}
where $\bm{A}_{ij}$ denotes the $(i,j)$-entry of $\bm{A}$. This example is inspired by the numerical experiments on column subset selection in \cite{cortinovisfrobenius}.
We set the matrix function to be $f(x) = \frac{x}{x+1}$, which is operator monotone. 
The results are presented in \Cref{fig:fig1}, which shows that $\varepsilon_{\text{projection}} \geq \varepsilon_{\text{Nyström}} \geq \varepsilon_{\text{funNyström}}$ for all norms and $q$ we consider, which confirms our theoretical results.
\begin{figure}[h]
\begin{subfigure}{.49\textwidth}
  \centering
  \includegraphics[width=.9\linewidth]{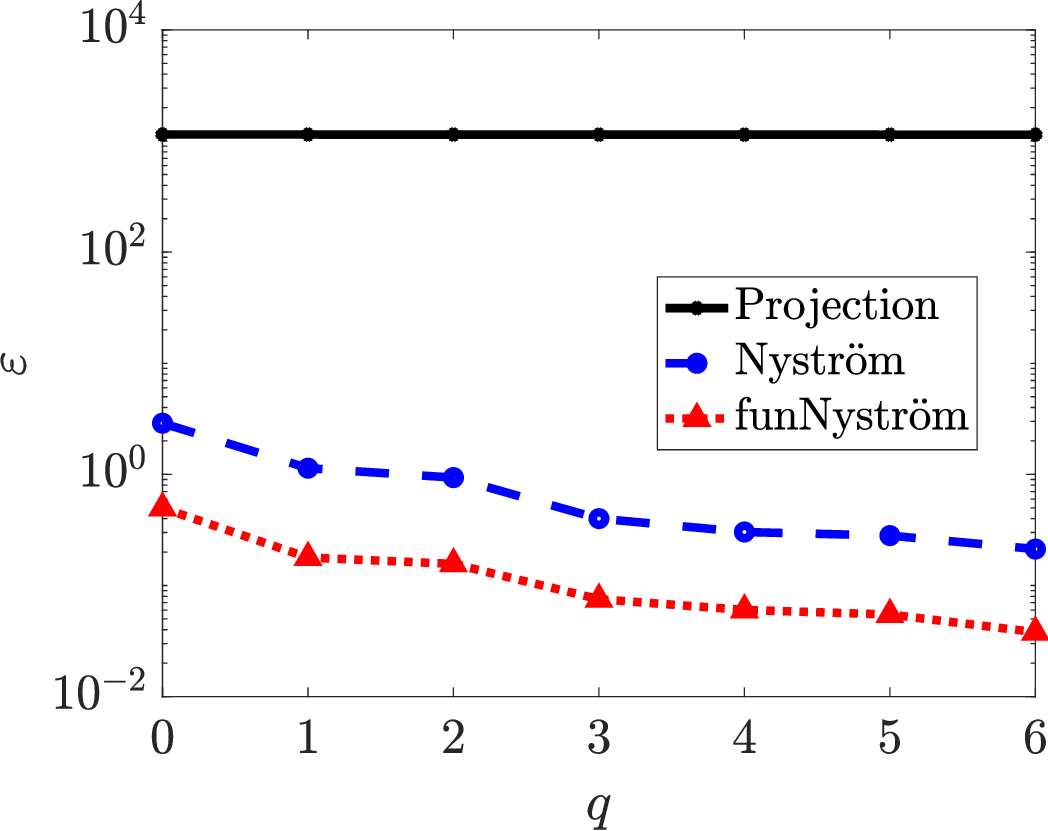}  
  \caption{Nuclear norm}
\end{subfigure}
\begin{subfigure}{.49\textwidth}
  \centering
  \includegraphics[width=.9\linewidth]{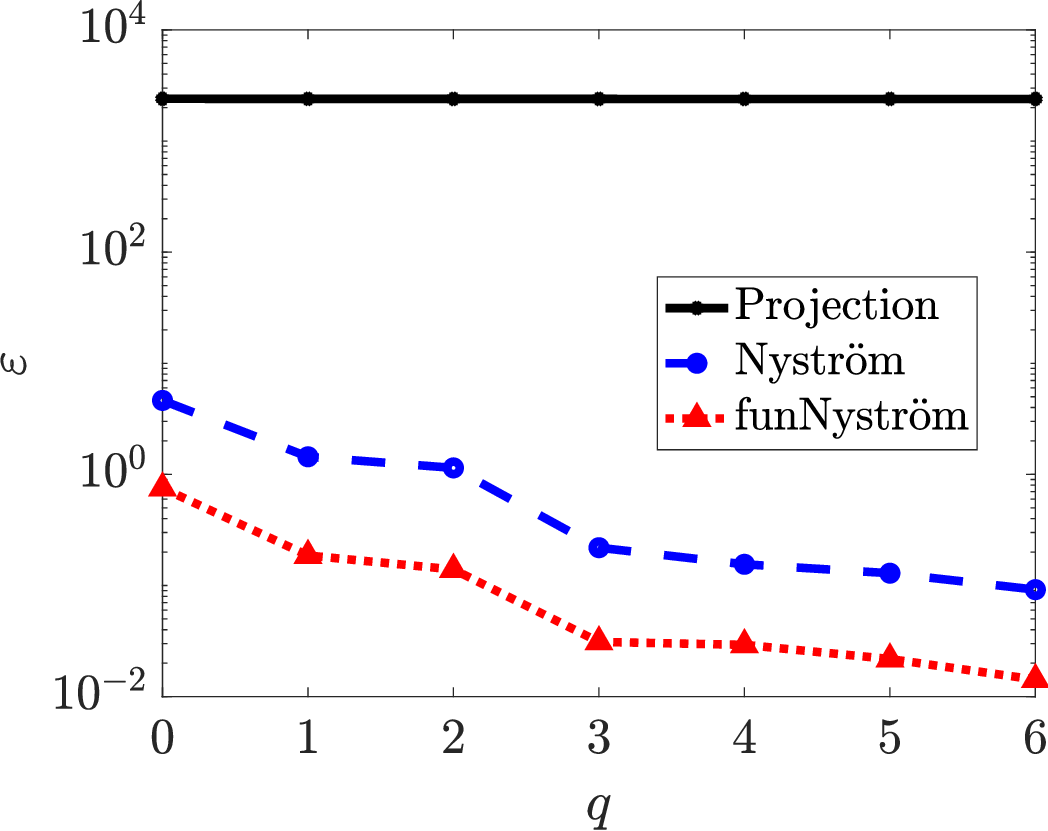}  
  \caption{Frobenius norm}
\end{subfigure}
\begin{subfigure}{.49\textwidth}
  \centering
  \includegraphics[width=.9\linewidth]{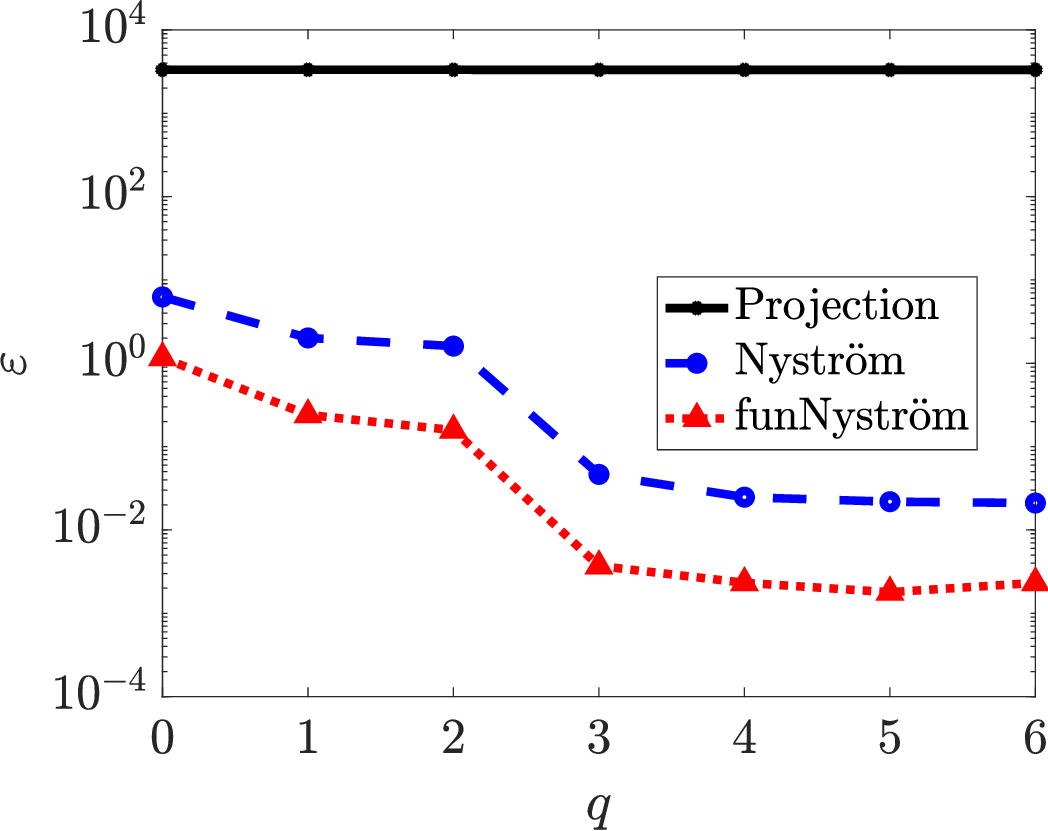}  
  \caption{Operator norm}
\end{subfigure}
\begin{subfigure}{.49\textwidth}
  \centering
  \includegraphics[width=.9\linewidth]{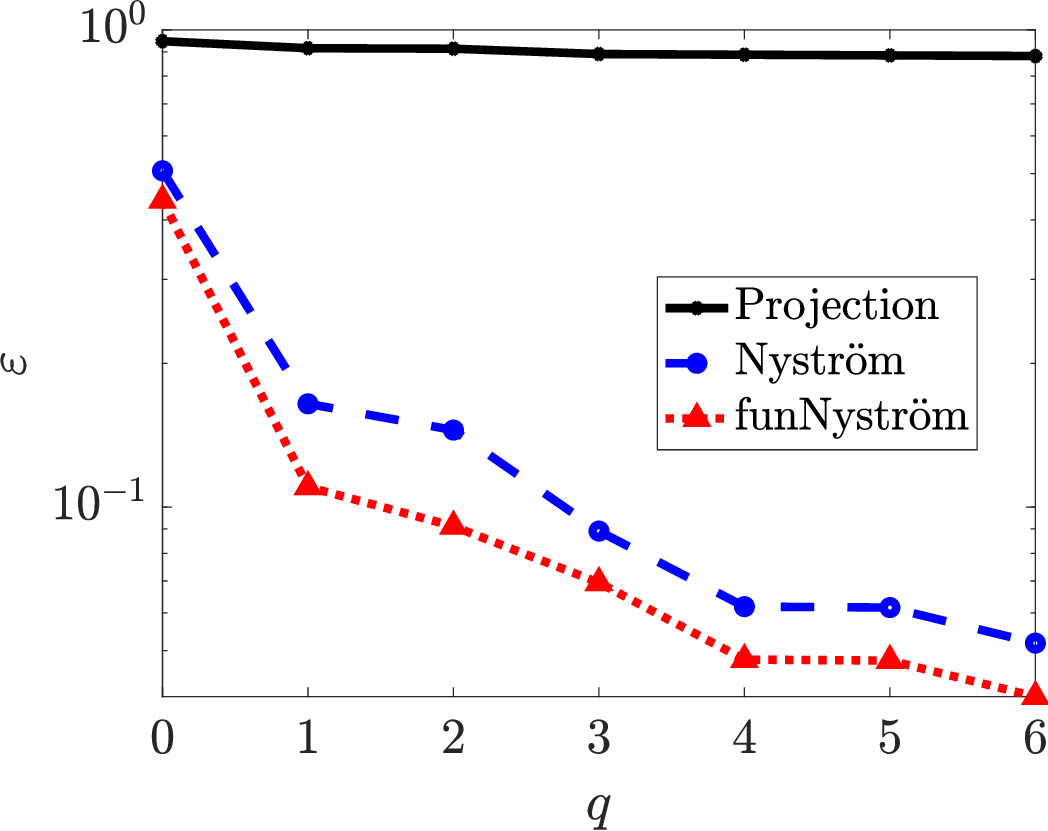}  
  \caption{Eigenvalue estimates}
\end{subfigure}
\caption{Comparing $\varepsilon_{\text{projection}}, \varepsilon_{\text{Nyström}},$ and $\varepsilon_{\text{funNyström}}$ for column subset selection. Note that $\varepsilon_{\text{projection}}$ is significantly worse than the $\varepsilon_{\text{Nyström}}$ and $\varepsilon_{\text{funNyström}}$ since the orthogonal projection $\bm{Q}\bm{Q}^T$ zeros out all except $\ell$ rows of $\bm{A}$.
In contrast, the Nystr\"om approximation $\widehat{\bm{A}} = \bm{A}^{1/2} \bm{P}_{\bm{A}^{1/2} \bm{Q}} \bm{A}^{1/2}$ effectively performs half a step of subspace iteration on $\bm{Q}$, giving a better approximation.
}
\label{fig:fig1}
\end{figure}
\subsection{Krylov iteration}
In this experiment we set $\bm{Q}$ to be an orthonormal basis for the Krylov subspace $\range(\begin{bmatrix} \bm{\Omega} & \bm{A} \bm{\Omega} & \ldots & \bm{A}^{q} \bm{\Omega}\end{bmatrix})$ where $\bm{\Omega}$ is a random $3000 \times k$ matrix whose entries are independent identically distributed  Gaussian random variables with mean $0$ and variance $1$. We set $k = 10$ and vary $q = 0,1,\ldots,6$. We let $\bm{A}$ be a $3000 \times 3000 $ SPSD matrix whose eigenvalues are $\lambda_i = i^{-1}$ for $i = 1,\ldots,n$. We set the matrix function to be $f(x) = \log(1+x)$, which is operator monotone. The results are presented in \Cref{fig:fig2}.
Again, for all norms and all choices of $q$, we see that $\varepsilon_{\text{projection}} \geq \varepsilon_{\text{Nyström}} \geq \varepsilon_{\text{funNyström}}$.
\begin{figure}[h]
\begin{subfigure}{.49\textwidth}
  \centering
  \includegraphics[width=.9\linewidth]{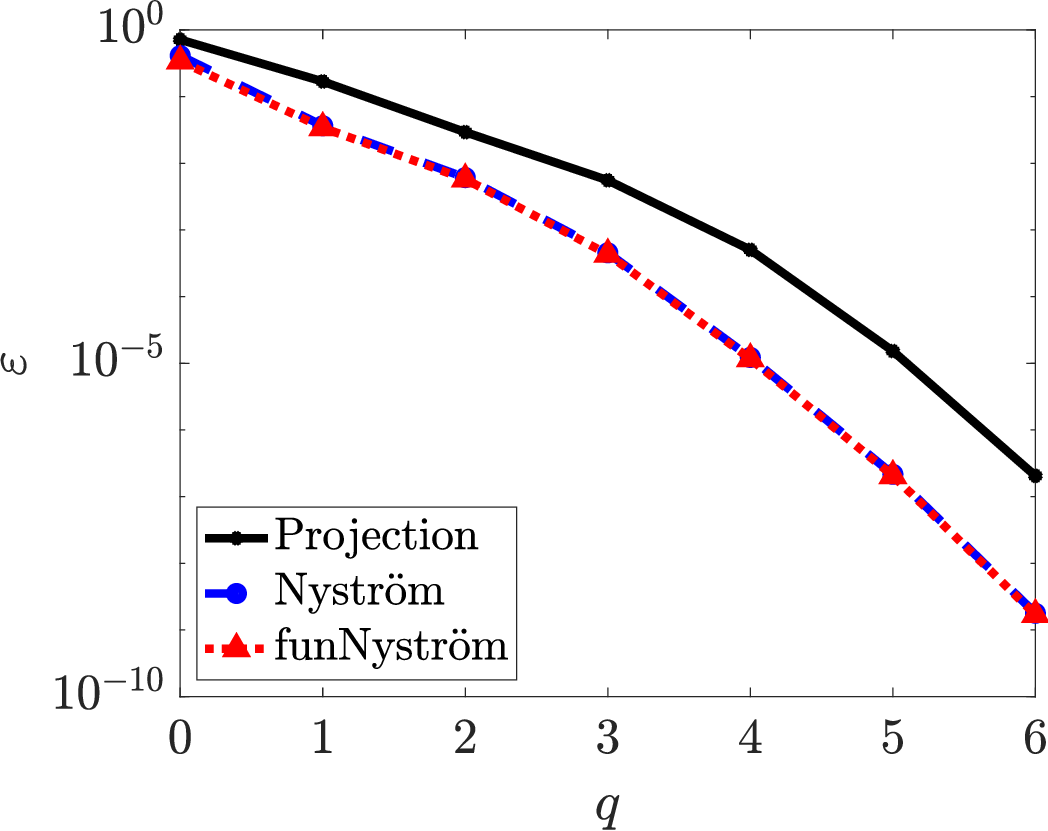}  
  \caption{Nuclear norm}
\end{subfigure}
\begin{subfigure}{.49\textwidth}
  \centering
  \includegraphics[width=.9\linewidth]{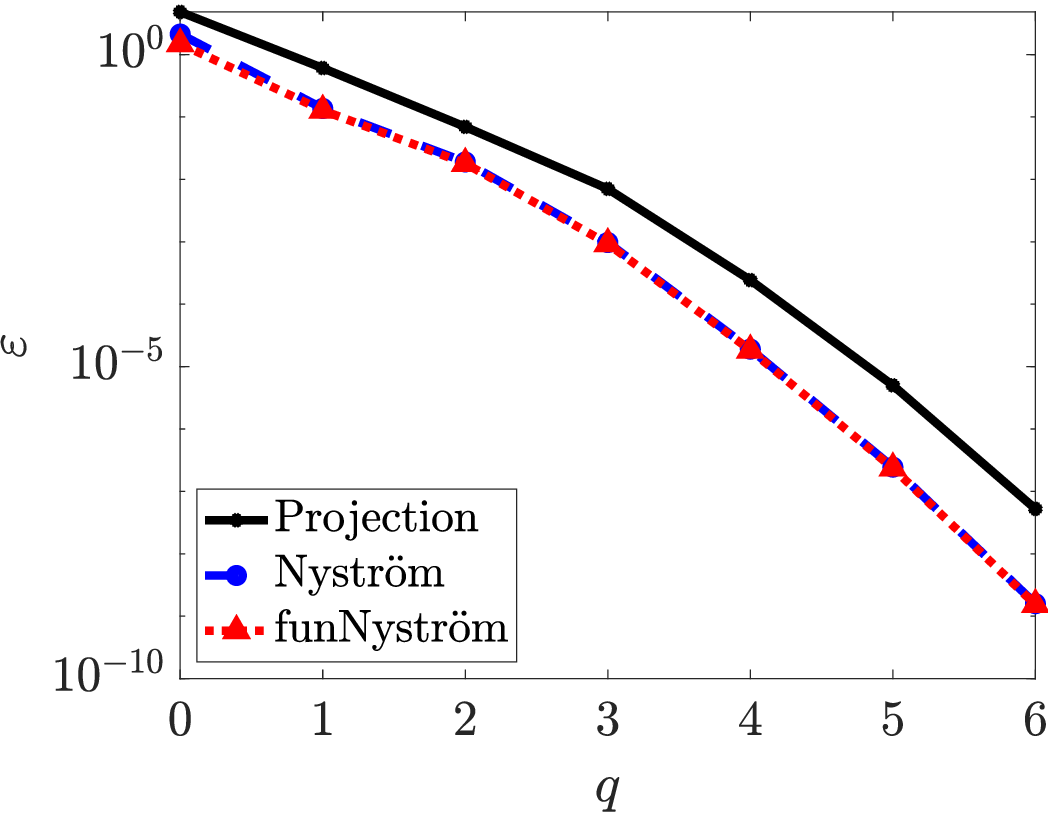}  
  \caption{Frobenius norm}
\end{subfigure}
\begin{subfigure}{.49\textwidth}
  \centering
  \includegraphics[width=.9\linewidth]{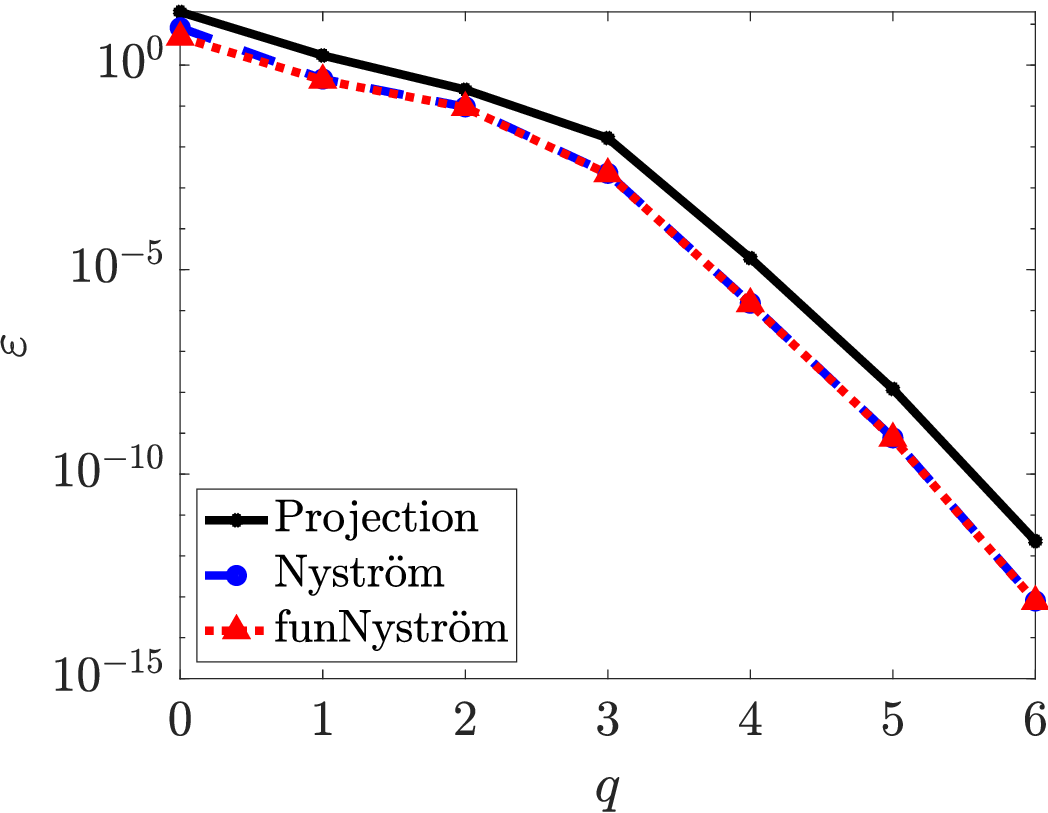}  
  \caption{Operator norm}
\end{subfigure}
\begin{subfigure}{.49\textwidth}
  \centering
  \includegraphics[width=.9\linewidth]{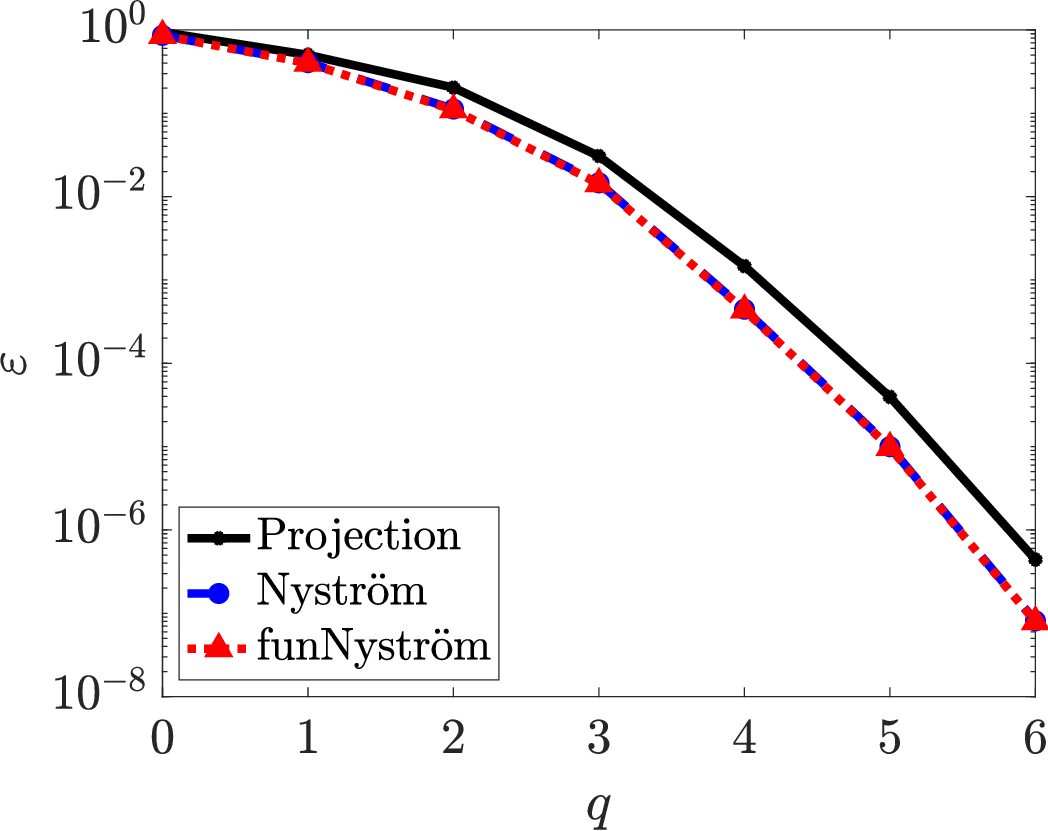}  
  \caption{Eigenvalue estimates}
\end{subfigure}
\caption{Comparing $\varepsilon_{\text{projection}}, \varepsilon_{\text{Nyström}},$ and $\varepsilon_{\text{funNyström}}$ for Krylov iteration.}
\label{fig:fig2}
\end{figure}
\subsection{Subspace iteration}\label{section:numex3}
Finally, we set $\bm{Q}$ to be an orthonormal basis for $\range(\bm{A}^q \bm{\Omega})$ where $\bm{\Omega}$ is a random $3000 \times k$ matrix whose entries are independent identically distributed  Gaussian random variables with mean $0$ and variance $1$.
We set $k = 10$ and vary $q = 0,1,\ldots,6$. 
We let $\bm{A}$ be a $3000 \times 3000 $ SPSD matrix whose eigenvalues are $\lambda_i = e^{-i}$ for $i = 1,\ldots,n$. 
We set the matrix function to be $f(x) = x^{1/2}$, which is operator monotone. 
The results are presented in \Cref{fig:fig3}. Once again, for all norms and all choices of $q$, we see that $\varepsilon_{\text{projection}} \geq \varepsilon_{\text{Nyström}} \geq \varepsilon_{\text{funNyström}}$.

\begin{figure}[h]
\begin{subfigure}{.49\textwidth}
  \centering
  \includegraphics[width=.9\linewidth]{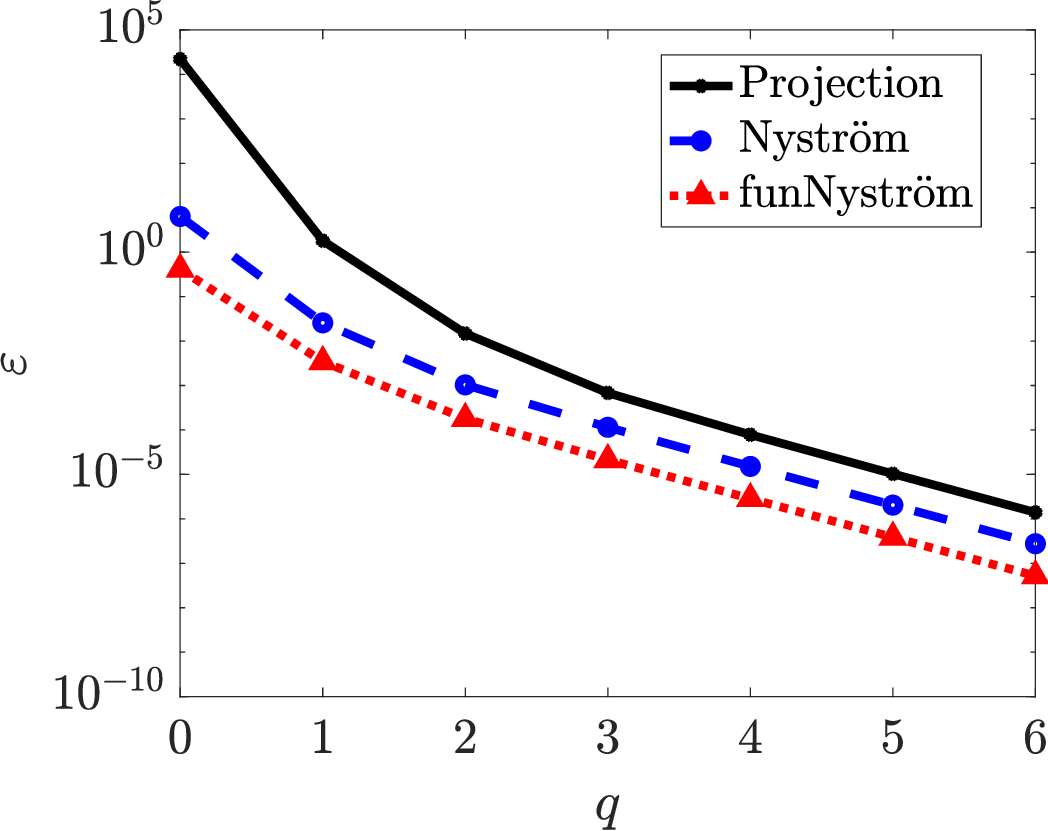}  
  \caption{Nuclear norm}
\end{subfigure}
\begin{subfigure}{.49\textwidth}
  \centering
  \includegraphics[width=.9\linewidth]{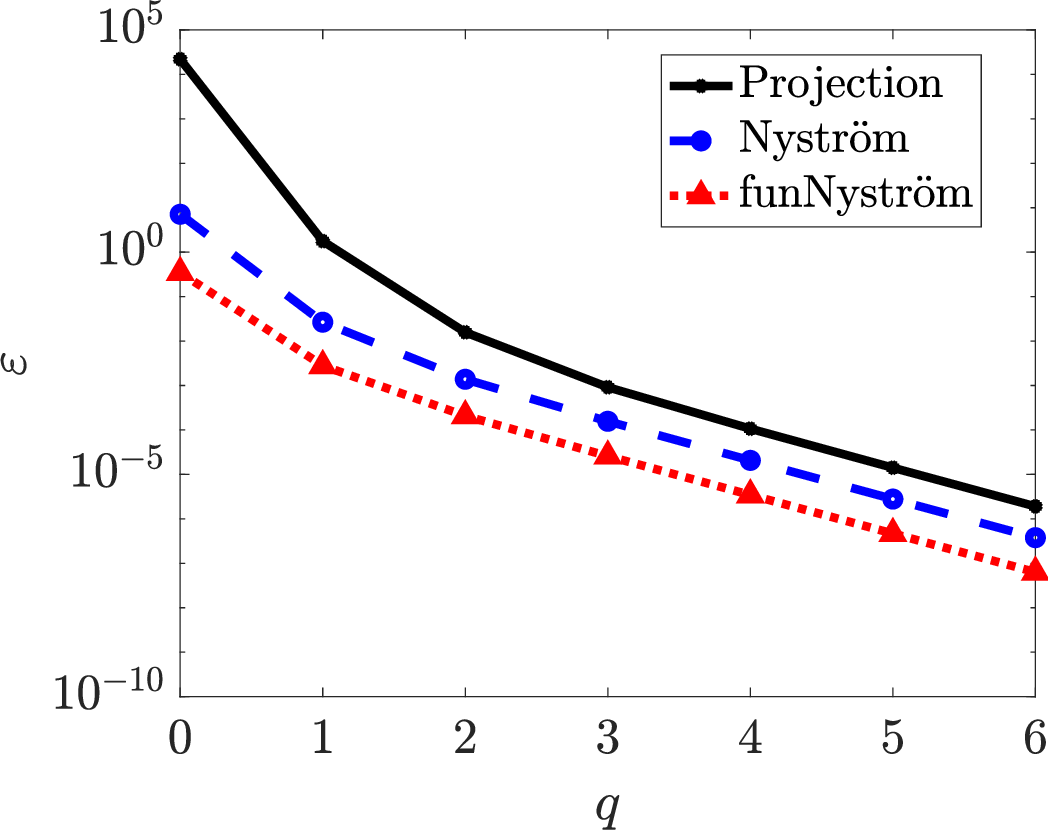}  
  \caption{Frobenius norm}
\end{subfigure}
\begin{subfigure}{.49\textwidth}
  \centering
  \includegraphics[width=.9\linewidth]{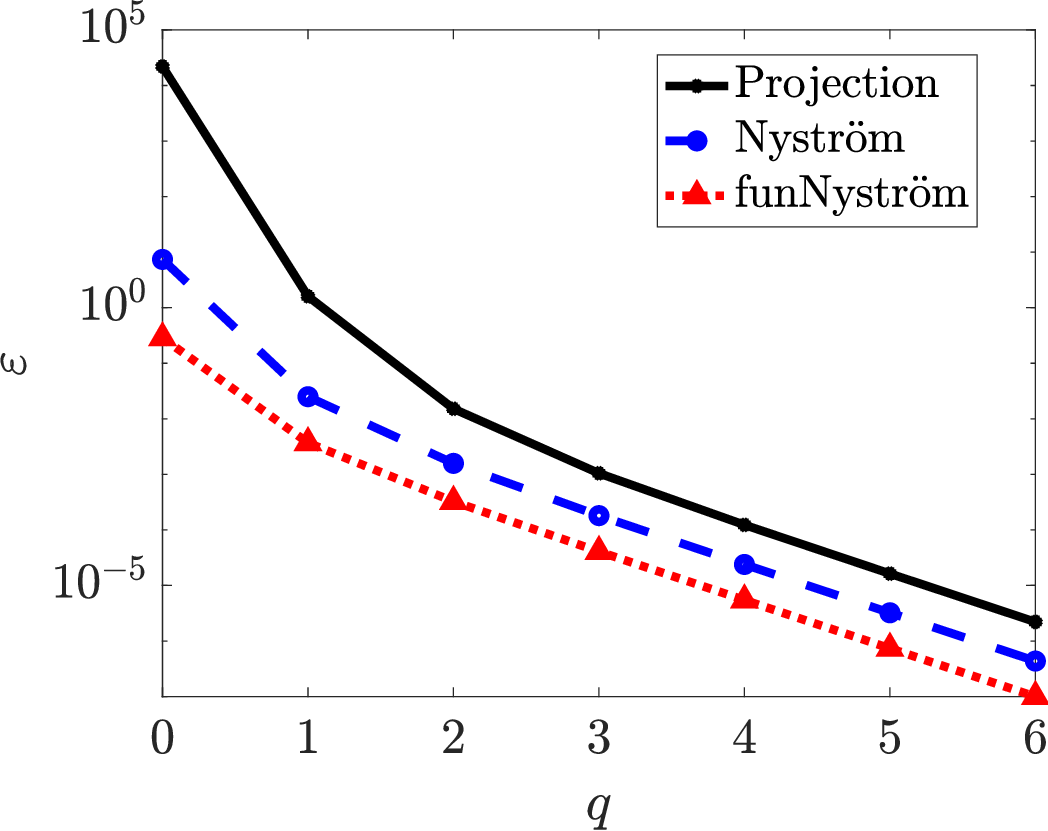}  
  \caption{Operator norm}
\end{subfigure}
\begin{subfigure}{.49\textwidth}
  \centering
  \includegraphics[width=.9\linewidth]{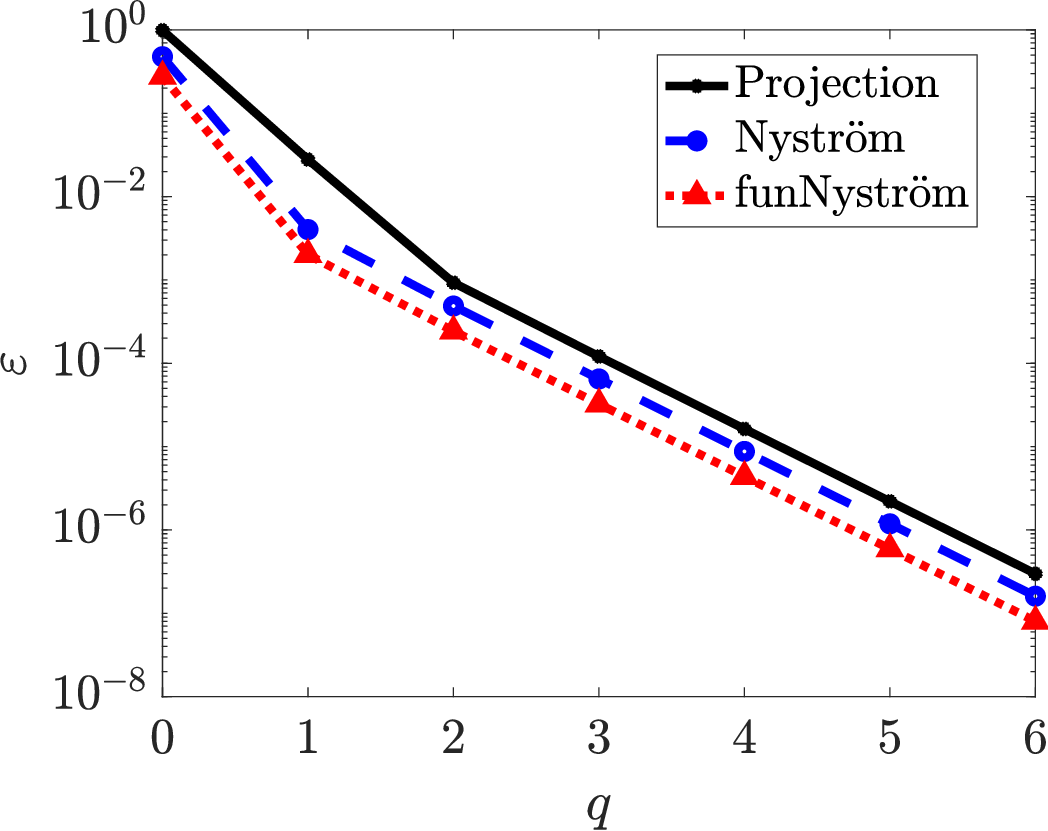}  
  \caption{Eigenvalue estimates}
\end{subfigure}
\caption{Comparing $\varepsilon_{\text{projection}}, \varepsilon_{\text{Nyström}},$ and $\varepsilon_{\text{funNyström}}$ for subspace iteration.}
\label{fig:fig3}
\end{figure}

\section{\rev{Examples showing the necessity of our assumptions}}
In this section, we give examples that demonstrate that the assumptions in our results in \Cref{section:black_box} are necessary.
Recall that in \Cref{section:black_box}, $\widehat{\bm{A}}$ is a SPSD rank $k$ approximation of $\bm{A}$, not necessarily the Nyström approximation.
We define
\begin{align*}
    \varepsilon_{\text{original}} &= \frac{\|\bm{A}-\widehat{\bm{A}}\|}{\|\bm{A}-\bm{A}_{(k)}\|} - 1; &
    \varepsilon_{\text{function}} &= \frac{\|f(\bm{A})-f(\widehat{\bm{A}})\|}{\|f(\bm{A})-f(\bm{A}_{(k)})\|} - 1,
\end{align*}
where $\|\cdot\| = \|\cdot\|_*,\|\cdot\|_F$ or $\|\cdot\|_2$. The results in \Cref{section:black_box} show that if $(i)$ $\bm{A} \succeq \widehat{\bm{A}}$, and $(ii)$ $f$ is a continuous operator monotone function with $f(0) \geq 0$, then $\varepsilon_{\text{function}} \leq \varepsilon_{\text{original}}$.
We might ask if we really have to assume that $\bm{A} \succeq \widehat{\bm{A}}$ or if it is sufficient to only assume that $\widehat{\bm{A}}$ is SPSD.
Similarly, we might ask if $f$ has to be operator monotone, or if it is sufficient to only assume that $f$ is non-decreasing, continuous, and concave.

We provide counter examples to show that, in most cases, the answer to these questions is no: our current assumptions are necessary. 
Specifically, we show instances where $\varepsilon_{\text{function}} > \varepsilon_{\text{original}}$ for specific norms when the assumptions are relaxed. 
Our results are summarized in 
Table~\ref{table:summ} in \Cref{section:intro}. 

\begin{example}[Operator monotone is necessary for nuclear norm]
\label{eg:mono-needed-for-nuc}
Let \(\bm{A} = \sbmat{ 1.1 \\ & 0.1 }\) and \(\widehat{\bm{A}} = \bm{v}\bm{v}^T\) where \(\bm{v}=\sbmat{1 \\ 0.095}\).
With \(k=1\) and \(f(x)=\min\{1,x\}\), we have that \(\varepsilon_{\text{function}} > 1.158\, \varepsilon_{\text{original}}\) in the nuclear norm.
For this example, \(\bm{A} \succeq \widehat{\bm{A}}\) and $f$ is non-decreasing and concave, but not operator monotone.
\end{example}

\begin{example}[\(\widehat{\bm{A}} \preceq \bm{A}\) or operator monotone is necessary for operator norm]
\label{eg:mono-needed-for-spectral}
Let \(\bm{A} = \sbmat{ 1.01 \\ & 0.01 }\) and \(\widehat{\bm{A}} = \bm{v}\bm{v}^T\) where \(\bm{v}=\sbmat{1.01 \\ 0.01}\).
With \(k=1\) and \(f(x)=\min\{1,x\}\), we have \(\varepsilon_{\text{function}} > 1.402\, \varepsilon_{\text{original}}\) in the operator norm.
For this example, \(\bm{A} \cancel{\succeq} \widehat{\bm{A}}\) and $f$ is non-decreasing and concave, but not operator monotone.
\end{example}
For the two examples above, we can confirm that $f(x) = \min\{1,x\}$ is not operator monotone because it is not differentiable, which violates \Cref{lemma:opmon_properties}~(\textit{\ref*{eqnum:opmon_implies_smooth}}).

\begin{example}[\(\widehat{\bm{A}} \preceq \bm{A}\)  is necessary for Frobenius and nuclear norms.]
\label{eg:psd-needed-for-frob-nuc}
Let \(\bm{A} = \sbmat{ 1 \\ & 1 \\ & & 0 }\) and \(\widehat{\bm{A}} = \bm{v}\bm{v}^T\bm{A}\bm{v}\bm{v}^T\) where \(\bm{v} = \frac1{\sqrt2}\sbmat{ 0 \\ 1 \\ 1 }\).
Then, with \(k=1\) and \(f(x) = \sqrt{x}\) we have \(\varepsilon_{\text{function}} > 1.095\varepsilon_{\text{original}}\) in the nuclear norm and \(\varepsilon_{\text{function}} > 1.049\varepsilon_{\text{original}}\) in the Frobenius norm.
For this example, \(f\) is operator monotone, but it can be checked that \(\bm{A}\cancel{\succeq}\widehat{\bm{A}}\). 
\end{example}

Despite the above counterexamples, gaps still remain in our understanding of what assumptions are necessary for a good low-rank approximation to $\bm{A}$ to imply a good low-rank approximation for $f(\bm{A})$. For example, we have not found a counter example for the operator and Frobenius norms when $\bm{A} \succeq \widehat{\bm{A}}$ and $f$ is non-decreasing and concave, but not operator monotone. Additionally, we emphasize that our counterexampless do not rule our ``approximate'' versions of our results on funNyström (\Cref{theorem:frobenius_black_box,theorem:nuclear_black_box,theorem:spectral_black_box}). For example, while $\varepsilon_{\text{function}}$ can be larger than $\varepsilon_{\text{original}}$, it might be possible to prove that $\varepsilon_{\text{function}} \leq  2\cdot \varepsilon_{\text{original}}$ under relaxed assumptions.

\section{Conclusion}
In this work, we significantly generalize a result of Persson and Kressner \cite{persson2022randomized} by showing that near-optimal Nyström approximations imply near-optimal funNyström approximations, independently of how the Nyström approximation was obtained. 
This implies that any method that computes good Nyström approximations can be used to obtain good low-rank approximations of operator monotone matrix functions.
We have also shown that good low-rank projections imply good Nyström approximations. This allows us to translate low-rank approximation guarantees for many common algorithms into guarantees for  Nyström approximation, and thus for funNyström approximation. 

\begin{paragraph}{Acknowledgments} 
We thank Daniel Kressner for helpful comments on this work.
\end{paragraph}


\bibliographystyle{siam}
\bibliography{bibliography}

\end{document}